\documentclass[11pt]{amsart}

\usepackage{latexsym}
\usepackage{amssymb}
\usepackage{amsmath}
\usepackage{color}

\usepackage{tikz}
\usetikzlibrary{arrows.meta,positioning,calc}

\definecolor{qblue}{RGB}{18,45,115}
\definecolor{qred}{RGB}{150,35,35}
\definecolor{qgray}{RGB}{90,90,90}

\tikzset{
  qbox/.style={
    draw=qblue,
    very thick,
    rounded corners=4pt,
    align=center,
    fill=blue!2,
    inner sep=7pt
  },
  tbox/.style={
    draw=qred,
    very thick,
    rounded corners=4pt,
    align=center,
    fill=red!2,
    inner sep=7pt
  },
  qarrow/.style={
    -{Latex[length=3mm,width=2.3mm]},
    very thick,
    qblue
  },
  tarrow/.style={
    -{Latex[length=3mm,width=2.3mm]},
    very thick,
    qred
  },
  qdoublearrow/.style={
    {Latex[length=3mm,width=2.3mm]}-{Latex[length=3mm,width=2.3mm]},
    very thick,
    qblue
  },
  garrow/.style={
    -{Latex[length=3mm,width=2.3mm]},
    thick,
    dashed,
    qgray
  },
  lab/.style={
    font=\footnotesize,
    align=center
  }
}

\usepackage[a4paper]{geometry}
\newtheorem{theorem}{Theorem}[section]
\newtheorem{lemma}[theorem]{Lemma}
\newtheorem{proposition}[theorem]{Proposition}
\newtheorem{corollary}[theorem]{Corollary}
\newtheorem{definition}[theorem]{Definition}
\newtheorem{example}[theorem]{Example}
\newtheorem{remark}[theorem]{Remark}

\newcommand\id{\mathop{\rm id}}

\newcommand\Tr{\mathop{\rm Tr}}

\newcommand{\A}{\mathcal{A}}
\newcommand{\M}{\mathcal{M}}
\newcommand{\B}{\mathcal{B}}

\newcommand{\op}{\operatorname{op}}

\newcommand{\cl}[1]{\mathcal{#1}}
\newcommand{\bb}[1]{\mathbb{#1}}

\newcommand{\N}{\mathcal{N}}

\addtocontents{toc}{\protect\setcounter{tocdepth}{1}}

\begin{document}

\title[Quantum Non-local Games]{Quantum non-local games: Quantum relations, projection lattices and rule operators}

\author[A. Chatzinikolaou]{Alexandros Chatzinikolaou}
\address{ Institute of Mathematics of the Polish Academy of Sciences\\ ul. Śniadeckich 8\\
00--656 Warszawa\\ Poland}
\email{achatzinik.math@gmail.com}

\thanks{2020 {\it Mathematics Subject Classification.} Primary 46L07, 81P45;
Secondary 46L10, 46L60, 46L89, 81P40, 81P47.}

\thanks{{\it Key words and phrases:} Quantum non-local games, operator spaces, operator bimodules, quantum relations, projection-lattice games, reflexive operator spaces, rule operators, synchronicity.}

\date{\today}

\begin{abstract}
Quantum non-local games with quantum inputs or outputs have been formulated
in several different languages, including  rank-one and quantum XOR games, support maps between projection
lattices, probabilistic quantum hypergraphs, and Frobenius-algebraic rule
operators on finite quantum sets.  We give a unified operator-algebraic
comparison of these models by assigning to each rule its winning
transformation space: the operator space of transformations accepted with certainty.
We introduce \(\mathcal R\)-projection-test quantum games with finite-dimensional input and output von Neumann algebras and an arbitrary, possibly infinite-dimensional, referee von Neumann algebra \(\mathcal R\). Their winning transformation spaces are precisely operator spaces with a natural bimodule structure, equivalently rectangular quantum relations.
We compare this formalism with projection-lattice games, hypergraph quantum games, and the graphical rule-operator  definition. Todorov--Turowska's projection-lattice games capture exactly the reflexive winning bimodules.  Hypergraph
quantum games admit value-preserving projection-test realisations,
and, after passing to perfect transformations, describe the same reflexive
part as projection-lattice games. Using Daws' technique, we also translate rule operators 
to projections in tensor products of finite-dimensional von Neumann algebras. This identification places graphical rules in the same operator-bimodule framework and preserves both values and perfectness.
Finally, we compare concurrency with the synchronicity conditions of Goldberg and of Bochniak--Kasprzak--So{\l}tan. In concrete representations, BKS synchronicity is equivalent to concurrency, while Goldberg synchronicity implies concurrency  and becomes equivalent to it under the special Frobenius normalisation. The framework is illustrated by classical, rank-one, quantum XOR, colouring, and quantum graph homomorphism and isomorphism games.
\end{abstract}

\maketitle


\section{Introduction}
Over the past twenty years, non-local games \cite{CHTW04} have become central to quantum information theory, serving as a primary method for testing and quantifying entanglement and offering an elegant operational setting for studying Bell inequalities.  In these cooperative, one-round games, two separated players try to convince a referee that they can coordinate their answers without communication. By allowing quantum strategies, the resulting input-output behaviour is described by shared states and local measurement operators. Accordingly, the analysis of non-local games naturally involves operator spaces, representations, and tensor products, placing the subject at the interface of quantum information theory and non-commutative analysis \cite{PV16}.  Questions about the strength of various quantum models can  therefore be translated  into  questions  about tensor products, traces, and representations
of operator algebras.  This point of view was made
particularly vivid by the works \cite{JNPPSW11,Fri12,Oza13}, which related
Tsirelson's problem to the Connes Embedding Problem.  Further significant developments
include the proof that the set of finite-dimensional quantum
correlations is not closed \cite{Slo19} (see also \cite{DPP19}), and the
breakthrough theorem \(\operatorname{MIP}^*=\operatorname{RE}\)  \cite{JNVWY20}, which separated 
approximately quantum and quantum commuting models and gave a negative answer
to the Connes Embedding Problem.

At the operational level, a classical two-player non-local game is specified
by finite question sets \(X,Y\), finite answer sets \(A,B\), a probability
distribution \(\pi\) on \(X\times Y\), and a rule predicate
\( 
\lambda:A\times B\times X\times Y\to\{0,1\}.
\)
In each round, the referee samples \((x,y)\) according to \(\pi\), sends \(x\)
to Alice and \(y\) to Bob, and receives answers \(a\) and \(b\), respectively.
The players win precisely when \(\lambda(a,b,x,y)=1\).  Thus, a correlation
\(p(a,b|x,y)\) wins the game with probability
\[
\omega(\lambda,\pi,p)
=
\sum_{x,y,a,b}
\pi(x,y)\lambda(a,b,x,y)p(a,b|x,y).
\]
Although the players may agree on a joint strategy before the game begins,
they are not allowed to communicate during the game.  Different assumptions on the
resources available to them lead to the familiar hierarchy of correlation sets and hence to different values of the game~\cite{HP16}.

Quantum games arise naturally when the information exchanged between the
referee and the players is transmitted through quantum states.  In this
setting, the referee's questions, the players' answers, or both, may be
quantum systems rather than classical symbols.  Unlike in the classical case,
however, there is no single evident replacement for the Boolean rule predicate
and the question distribution.  This has led to several related formalisms.  In
cooperative quantum games, the referee prepares a quantum input and tests the
returned system with an accepting measurement \cite{LTW13}; rank-one quantum
games form a distinguished subclass with a close connection to operator spaces
\cite{CJPP15}; and quantum XOR games provide a structured
quantum-input/classical-output model \cite{RV15}.  Other approaches encode the
rules by support-valued maps between projection lattices
\cite{TT24,BHTT23,BHTT24}, by probabilistic quantum hypergraphs and projection quantum games
\cite{CLTT23}, or by rule operators between finite quantum sets
\cite{Goldberg25,Goldberg26}.    Recent works have also developed the
value theory of quantum non-local games through resource-theoretic,
operator-space, and operator-algebraic methods
\cite{CLTT23,AJP26}. Several partial extensions of non-local games to quantum settings have also been
studied, including semi-quantum games, steering and monogamy-of-entanglement
games, and extended non-local games \cite{Bus12,Fri12,TFKW13,JMRW16}. Different formulations of quantum games are also discussed from the graphical point of view in Goldberg's thesis \cite{Goldberg25}.

While these diverse formalisms successfully capture different operational aspects of quantum interactions, they retain distinct mathematical data and structures, making it previously unclear how they formally relate to one another. The primary goal of this work is to systematically compare these seemingly disparate approaches and establish a unified mathematical framework for quantum non-local games. To bridge these models, we focus on the structural core of the games: the rules that determine which quantum transformations are accepted with certainty. We demonstrate that across all these varied frameworks, the common underlying object governing perfect strategies is an operator space, or more precisely an operator bimodule, which we term the \emph{winning transformation space}.  we compare different rule formalisms through their winning transformation spaces and determine which correspondences preserve perfectness and, where applicable, game values. To carry out the comparison we allow the referee to possess a possibly infinite-dimensional von Neumann algebra $ \cl R$.

The prototypical operational model for us is the cooperative quantum game
description of Leung--Toner--Watrous \cite{LTW13}, which we call a \emph{projection-test quantum
game}.  In this model, there are finite-dimensional input Hilbert spaces \(H_X,H_Y\), output
Hilbert spaces \(H_A,H_B\), and a referee space \(H_R\).  The referee prepares
a fixed input state
\( 
\psi\in (H_X\otimes H_Y)\otimes H_R,
\)
sends the \(H_X\)-system to Alice and the \(H_Y\)-system to Bob, and keeps the
\(H_R\)-system.  After the players act on the systems they receive with a transformation \(T:H_X \otimes H_Y \to  H_A\otimes H_B\), according to a prescribed strategy, and the referee tests
\((H_A\otimes H_B)\otimes H_R\) against an accepting projection \(Q\). They win precisely when 
\( 
Q^{\perp}(T\otimes 1_{H_R}) \psi =0.
\)
The winning probability of  a strategy $\Gamma : S^1(H_X) \otimes S^1(H_Y) \to  S^1(H_A) \otimes S^1(H_B)$ for the game $G=(\psi,Q)$  is given by
\[ 
\omega_{G}(\Gamma)= \Tr \left( ( \Gamma \otimes \id )(\psi \psi^*) Q\right).
\]

In Section \ref{sec:projection} of the paper we develop this point of view for projection-test
games. For the comparison developed here, we pass from this concrete operational
picture to a support-level formulation over finite-dimensional (although an extension to infinite dimensions may be possible using the recent framework of~\cite{Daws25}) von Neumann
algebras $\cl X \subseteq \B(H_X)$, $\cl Y \subseteq \B(H_Y)$, $\cl A \subseteq \B(H_A)$ and $ \cl B\subseteq \B(H_B)$.   We also consider an arbitrary (allowing infinite dimensions) von Neumann algebra $\cl R \subseteq \B(H_R)$ for the referee, and let
\( 
P\in (\cl X \otimes \cl Y)\otimes \cl R\) and
\( 
Q\in (\cl A \otimes \cl B)\otimes \cl R
\)
be projections. We call $ G=(P,Q)$, an \emph{$ \cl R$-projection-test quantum game}.  The associated winning transformation space is
\[
\mathcal U_{P,Q}
=
\{T\in \B(H_X \otimes H_Y,H_{A} \otimes H_B):
Q^\perp(T\otimes 1_{\cl R})P=0\}.
\]
Since \(P\) and \(Q\) lie in
\((\cl X \otimes \cl Y)\otimes \cl R\) and \((\cl A \otimes \cl B)\otimes \cl R\), respectively,
\(\mathcal U_{P,Q}\) is an \((\cl A \otimes \cl B)'\)-\((\cl X \otimes \cl Y)'\) bimodule.  Conversely, every
\((\cl A \otimes \cl B)'\)-\((\cl X \otimes \cl Y)'\) bimodule
\(\mathcal U\subseteq \B(H_X \otimes H_Y,H_{A} \otimes H_B)\) admits a canonical
generalised projection-test realisation 
(Theorem \ref{thm:abstract-projection-test}). In finite dimensions, these bimodules are precisely quantum relations from
\(\cl X \otimes \cl Y\) to \(\cl A \otimes \cl B\) in rectangular form.  The general theory of quantum relations between two von Neumann
algebras was recently developed
by Daws \cite{Daws26}.

The winning transformation space is
not itself the physical strategy set;  rather, if \(\Gamma\) is a channel
between the corresponding systems and \(\mathcal X_\Gamma\) denotes its Kraus
space, or the associated Kraus bimodule in the von Neumann algebraic setting,
then
\( 
\Gamma \)  is perfect for the game
 if and only if \( \mathcal X_\Gamma\subseteq\mathcal U_{P,Q}.
\) 
In the terminology of Daws \cite[Section~4]{Daws26}, the Kraus
bimodule $\mathcal X_\Gamma$ is the quantum relation associated with the adjoint channel. Thus, perfectness
means precisely that the quantum relation of the strategy is contained
in the winning quantum relation.
Thus, the winning transformation space records the rule at the Kraus level,
while the chosen strategy class determines which channels are available as
strategies.  Our emphasis is therefore on the rule side: we regard strategies
uniformly as channels and compare the corresponding Kraus-level perfectness
conditions.

This construction extends the classical non-local game picture.  Indeed, if
\(\lambda:A\times B\times X\times Y\to\{0,1\}\) is a classical rule predicate, then the corresponding winning space is the masa-bimodule
\[
\mathcal U_{\Lambda_{\lambda}}
=
\operatorname{span}
\{\varepsilon_{(a,b),(x,y)}:((x,y),(a,b)) \in \Lambda_{\lambda}\}
\subseteq \B(\mathbb C^{X\times Y},\mathbb C^{A\times B}),
\]
associated with the relation:
\[
\Lambda_\lambda
=
\{((x,y),(a,b))\in (X\times Y)\times(A\times B):
\lambda(a,b,x,y)=1\}.
\]
Here $\varepsilon_{(a,b),(x,y)}=e_{(a,b)}e_{(x,y)}^*$ is the rectangular matrix unit sending the canonical basis vector $e_{(x,y)}$ to $e_{(a,b)}$.
Conversely, every such masa-bimodule determines a canonical relation, and
hence a classical rule predicate.  Thus projection-test quantum games provide
a non-commutative analogue of binary relations; viewing relations as
hypergraphs, this quantisation of classical non-local games was already
present in \cite{HT23,HT25}.  The correspondence between relations and
arbitrary masa-bimodules originates in \cite{EKS98}.

In Section \ref{sec:projection_lattice} we study \emph{projection-lattice quantum games}, in the sense of Todorov--Turowska~\cite{TT24}, which capture the
reflexive part of this operator-bimodule picture.  In the matrix-algebra
setting, such a game is encoded by a zero-preserving join-continuous map from
the projection lattice of the input algebra to the projection lattice of the
output algebra, and perfectness is expressed by requiring the output support
of each input projection to lie under the prescribed accepting support.  We extend this to concrete finite-dimensional von Neumann
algebras.  Following the theory of subspace maps and reflexive operator spaces
\cite{LS75,Erdos86}, a \(\mathcal N'\)-\(\mathcal M'\) bimodule
\(\mathcal U\subseteq \B(H_{\rm in},H_{\rm out})\) determines the support map
\[
\varphi_{\mathcal U}:\mathcal P(\mathcal M)\to\mathcal P(\mathcal N),
\qquad
\varphi_{\mathcal U}(P)
=
P_{[\mathcal U P H_{\rm in}]}.
\]
Conversely, a zero-preserving join-continuous map
\(\varphi:\mathcal P(\mathcal M)\to\mathcal P(\mathcal N)\) determines
\[
\mathcal U_\varphi
=
\{T\in \B(H_{\rm in},H_{\rm out}):
\varphi(P)^\perp TP=0
\text{ for all }P\in\mathcal P(\mathcal M)\}.
\]
The space \(\mathcal U_\varphi\) is always reflexive, whereas
\( 
\mathcal U_{\varphi_{\mathcal U}}=\operatorname{Ref}(\mathcal U).
\)
Thus, projection-lattice games describe precisely the reflexive winning
transformation spaces after restricting to rules
\(\varphi=\varphi_{\mathcal U_\varphi}\) (Proposition \ref{prop:bimo_lattice}).  This qualification is essential:
a general projection-test game need not have a reflexive winning space.  We
give a simple example (Example
\ref{ex:not_lattice}) whose winning space is the trace-zero subspace of
\(M_2\), and is therefore not representable by a Todorov--Turowska
projection-lattice game with the same winning transformations.

The probabilistic quantum hypergraph formalism of \cite{CLTT23} fits the same
picture in a measurable form (Subsection
\ref{subsec:hypergr}).  In that setting, the referee samples a pure input
state according to a probability measure \(\mu\) and associates to it an
accepted output projection.  We show that such a game can be realised as a
projection-test game over the referee space \(L^2(\mathbb P,\mu)\), with the
accepting projection acting as a multiplication operator, and that the
corresponding values agree (Proposition~\ref{prop:hypergraph-projection-test}).  We also compare this with the
projection-lattice formalism: after passing to winning transformation spaces,
hypergraph quantum games and projection-lattice games describe the same
reflexive part of the theory  (Proposition~\ref{prop:hypergraph-reflexive}).  This agreement is only at the level of perfect
transformations; hypergraph games carry additional measure data and hence a
value theory, whereas projection-lattice games record only support
constraints.

A further aim of the paper (Section \ref{sec:Gold}) is to connect the operator-bimodule description
with Goldberg's graphical definition of quantum games
\cite{Goldberg25,Goldberg26}.  The passage between the graphical and
operator-algebraic settings follows the same guiding principle as the
equivalence between finite quantum graphs and operator bimodules in
\cite{MRV18, Daws24}: after choosing faithful traces, the underlying finite quantum
sets are realised as \(L^2\)-spaces of finite-dimensional tracial von
Neumann algebras, and graphical rules are converted into concrete operator
spaces. Our methods are operator-algebraic and may be viewed as a tracial specialisation of Daws’ comparison between different approaches to quantum graphs \cite{Daws24} and of his subsequent development of the rectangular theory of quantum relations and adjacency operators \cite{Daws26}. In particular, in the appropriate
symmetric and reflexive case, we recover the correspondence between quantum graphs and their operator-bimodule, or quantum-relation,
presentations.

Let  $\cl X \subseteq \B(H_X)$, $\cl Y \subseteq \B(H_Y)$, $\cl A \subseteq \B(H_A)$ and $ \cl B\subseteq \B(H_B)$ be finite-dimensional von Neumann
algebras equipped with faithful traces.  Goldberg's formalism is graphical and
is phrased in terms of quantum sets, that is, special symmetric dagger
Frobenius algebras.  For the operator-algebraic comparison, we do not impose
the special normalisation; instead, we work with the \(L^2\)-spaces associated with the chosen traces. In this setting, a \emph{rule operator} is a linear operator 
\[  
\lambda:
L^2(\mathcal X)\otimes L^2(\mathcal Y)
\to
L^2(\mathcal A)\otimes L^2(\mathcal B),
\]
satisfying Frobenius idempotency and self-conjugacy (Definition \ref{def:gold_game}). For simplicity assume $\cl M = \cl X\otimes \cl Y$ and $ \cl N =\cl A \otimes \cl B$.
The main bridge is the linear isomorphism
\[
\Psi_{\M,\N}:
\B(L^2(\mathcal M),L^2(\mathcal N))
\longrightarrow
\mathcal N\otimes\mathcal M^{\op},
\qquad
\Psi_{\M,\N}(\widehat a\,\widehat b^{\,*})=a\otimes b^* .
\]
Here $\widehat a\in L^2(\mathcal N)$ denotes the vector represented by $a\in\mathcal N$, and $\widehat a\,\widehat b^{\,*}$ is the rank-one operator $\widehat x\mapsto\langle\widehat b,\widehat x\rangle\widehat a$, for $b\in\mathcal M$.
Under this transform, the Frobenius convolution of rule operators becomes
ordinary multiplication in \(\mathcal N\otimes\mathcal M^{\op}\), while the self-conjugacy operation becomes the adjoint (Theorem \ref{thm:goldberg-daws-transform}).  Hence, the two rule
equations for a  rule operator \(\lambda\) are transported exactly to
the conditions that \(\Psi_{\M,\N}(\lambda)\) be an idempotent self-adjoint element (Corollary \ref{cor:goldberg-rule-projection}).
Thus, \( 
Q_\lambda:=\Psi_{\M,\N}(\lambda)\in \mathcal N\otimes\mathcal M^{\op},
\) is a projection
and therefore determines an 
\(\mathcal N'\)-\(\mathcal M'\) operator bimodule (Proposition \ref{prop:rectangular-daws}).

This correspondence also preserves perfect play. Given channels in the sense of \cite{Goldberg26}, we associate quantum channels between the underlying von Neumann algebras (Proposition \ref{prop:L2-channel-algebra-channel}).  We show that the canonical  value of a correlation \(f\) agrees with the winning probability of
the corresponding channel $\Gamma_f$.  Moreover, \(f\) is perfect
for the rule operator quantum game if and only if $\Gamma_f$ is perfect for the
projection-test quantum game (Theorem \ref{thm:goldberg-projection-test}). 

In Section \ref{sec:synchronicity} we compare the resulting formalism with several quantum versions of
synchronicity.  In the classical setting, synchronicity says that equal
questions force equal answers.  In projection-lattice quantum games, this is
replaced by \emph{concurrency} \cite{BHTT23}, where the classical diagonal is
replaced by the projection onto the maximally entangled vector \(\Omega_X\).  We
extend this condition to arbitrary  finite-dimensional
von Neumann algebras by replacing the projection onto $ \Omega_X$ with
\( 
\Pi_{\mathcal X}=P_{\operatorname{vec}(\mathcal X')}.
\)
For channels, concurrency is equivalent to 
\( 
\Pi_{\mathcal A}^{\perp}T_i\Pi_{\mathcal X}=0
\)
for every Kraus operator $T_i$
(Proposition~\ref{prop:concurrency-equivalences}).
We compare this condition with the notion of synchronicity introduced by Goldberg
\cite{Goldberg26} and with the synchronicity condition of
Bochniak--Kasprzak--So{\l}tan \cite{BKS23}. Goldberg's synchronicity
always implies concurrency, and the converse holds under the special
Frobenius normalisation
(Proposition~\ref{prop:Goldberg-concurrency}). After passing to
concrete faithful representations with multiplicities, BKS
synchronicity is equivalent to concurrency, and hence to the same
Kraus support condition
(Proposition~\ref{prop:BKS-concurrency}). Thus BKS synchronicity and
concurrency coincide at the level of channels, while Goldberg
synchronicity implies them in general and coincides with them under
the special Frobenius normalisation.

The final section (Section \ref{sec:examples}) applies the common framework to several examples.  Classical
games become masa-bimodules.  Rank-one games, including coherent
state exchange, have reflexive winning spaces (Proposition \ref{prop:rank-ones-refl}).  Quantum XOR games
and quantum-to-classical or classical-to-quantum games give one-sided masa
modules, which are again reflexive (Proposition \ref{prop:one-sided-masa-modules-reflexive}).  We also compare the projection-lattice
and rule operator formulations of quantum graph homomorphism and isomorphism games
\cite{TT24,BHTT23,BHTT24,Goldberg26}, and treat the quantum-to-classical graph
homomorphism game of Brannan--Ganesan--Harris \cite{BGH22}.  In these graph
examples, the winning spaces are described by explicit block-support
constraints and are therefore reflexive.

\begin{figure}[t]
\centering
\begin{tikzpicture}[scale=.82, transform shape, every node/.style={font=\small}]

\node[qbox, text width=7cm, minimum height=1.8cm] (pt) at (0,0)
{\bfseries $\cl R$-Projection-test quantum games\\[-.2em]
\rule{6.1cm}{0.35pt}\\[-.1em]
 rectangular quantum relations};

\node[tbox, text width=2.9cm] (cl) at (-5.0,3.2)
{\bfseries Classical games\\[-.2em]
\rule{2.2cm}{0.35pt}\\[-.1em]
masa bimodules};

\node[qbox, text width=5.4cm] (gold) at (5.0,3.2)
{\bfseries Rule operator quantum games\\[-.2em]
\rule{4.3cm}{0.35pt}\\[-.1em]
rule operators
\(\leftrightarrow\) projections};

\node[tbox, text width=2.8cm] (hyp) at (7.8,0.4)
{\bfseries Hypergraph quantum games\\[-.2em]
\rule{2.2cm}{0.35pt}\\[-.1em]
measurable version};

\node[tbox, text width=4.5cm] (qtc) at (-5.0,-3.1)
{\bfseries Quantum-to-Classical /\\
Classical-to-Quantum games\\[-.2em]
\rule{3.4cm}{0.35pt}\\[-.1em]
one-sided masa modules};

\node[qbox, text width=4.5cm] (pl) at (4.2,-3.1)
{\bfseries Projection-lattice quantum games\\[-.2em]
\rule{3.2cm}{0.35pt}\\[-.1em]
reflexive bimodules};

\draw[qarrow, shorten >=10pt, shorten <=10pt]
(pl.west) to[out=155,in=-80] (pt.south);

\draw[qdoublearrow, shorten >=12pt, shorten <=12pt]
(gold.west) to[out=205,in=58]
node[lab, midway, right] {\(\omega\)}
(pt.north);

\draw[tarrow, shorten >=12pt, shorten <=12pt]
(hyp.west) to[out=-152,in=-8]
node[lab, midway, above] {\(\omega\)}
(pt.east);

\draw[tarrow, shorten >=10pt, shorten <=10pt]
(cl.east) to[out=35,in=170] (gold.north west);

\draw[tarrow, shorten >=10pt, shorten <=10pt]
(cl.south) to[out=-102,in=102] (qtc.north);

\draw[tarrow, shorten >=8pt, shorten <=8pt]
(qtc.east) to[out=-40,in=-150] (pl.west);

\draw[qdoublearrow, shorten >=10pt, shorten <=10pt]
(hyp.south) to[out=-60,in=25]
node[lab, midway, right, text width=3.4cm, align=center]
{ \\[-.2em] }
(pl.east);

\end{tikzpicture}
\caption{The main quantum-game formalisms compared in this paper through associated operator spaces; \(\omega\)-labelled arrows also preserve the value theory.}
\label{fig:game-formalisms}
\end{figure}

\subsection{Contents} The paper is organised as follows. In  Section~2 we study projection-test
quantum games and relate their perfect strategies to winning
transformation spaces. In the finite-dimensional von
Neumann algebra setting, we show that these winning spaces are precisely operator bimodules. In Section~3 we compare this picture
with projection-lattice games and probabilistic quantum hypergraphs. In  Section~4 we
translate the graphical rule operators into projections in tensor
products of finite-dimensional von Neumann algebras, and compare the
corresponding values and perfectness conditions. In Section~5 we compare different versions of synchronicity.  Finally,  in Section~6 we collect the main examples and counterexamples. The main correspondences and value-preserving identifications are summarized in Figure~\ref{fig:game-formalisms}.

\subsection{Notation and standing conventions}

Throughout the paper, Hilbert spaces are finite dimensional unless explicitly
stated otherwise. For Hilbert spaces $H$ and $K$, $ H \otimes K$ denotes their Hilbert space tensor product. Throughout the paper at least one of the Hilbert spaces in the tensor product will be finite dimensional so that the algebraic tensor product is already closed.   If \(H\) and \(K\) are Hilbert spaces, \(\B(H,K)\) denotes
the space of linear operators from \(H\) to \(K\), and \(\B(H)=\B(H,H)\). An operator space is a subspace $ \mathcal U\subseteq \B(H,K)$. We
write \(1_H\) for the identity operator on \(H\), and \(\id_E\) for the
identity map on an operator space \(E\). If \(E\subseteq H\) is a subspace,
\(P_E\) denotes the orthogonal projection onto \(E\). Inner products are
denoted by \(\langle\cdot,\cdot\rangle\) and are linear in the second
variable. For \(\xi\in K\) and \(\eta\in H\), the rank-one operator
\(\xi\eta^*\in \B(H,K)\) is given by
\[
(\xi\eta^*)(h)=\langle \eta,h\rangle \xi,
\qquad h\in H.
\]
Since we assume finite dimensions, \(\B(H)\) is spanned by its rank-one
operators. In particular, if \((e_i)_{i=1}^n\) is an orthonormal basis of
\(H\), then \(\B(H)\cong M_n\). In general, if \(X\) is a finite set, we write $ \bb C^{X}:= \bb C^{|X|}$ and denote the canonical basis by $ \{e_x:x\in X\}$. Similarly, we denote
\(M_X:=M_{|X|}\) and write $ \{\varepsilon_{x,y}: x,y\in X\}$ for the canonical matrix units. For finite sets \(X,Y\), we sometimes write \(XY\) for \(X\times Y\),
\(\mathbb C^{XY}\) for \(\mathbb C^X\otimes\mathbb C^Y\), and
\(M_{XY}\) for \(M_X\otimes M_Y\). We write \(\varepsilon_{x,y}\) for the canonical matrix
units in \(M_X\). Similarly, \(D_n\) denotes the diagonal algebra and, for a
finite set \(X\), \(D_X:=D_{|X|}\). The conjugate Hilbert space of \(H\) is denoted by \(\overline H\). Its
elements are written \(\overline\xi\), \(\xi\in H\), with scalar multiplication
\( 
\lambda\overline\xi=\overline{\overline\lambda \xi}.
\) A state in a Hilbert space \(H\) is a unit
vector. For \(H=\bb C^n\), we write
\(  
\Omega
=
n^{-1/2}\sum_{i=1}^n e_i\otimes \overline e_i
\in \bb C^n\otimes \overline{ \bb C^n}
\)
for the maximally entangled state.

We write 
\(S^1(H)\) for the space of trace-class operators. The usual unnormalised trace on
\(S^1(H)\) is denoted by \(\Tr\). 
The vectorisation map
\[
\operatorname{vec}(\xi\eta^*)=\xi\otimes\overline\eta,
\]
is the isometric isomorphism between the Hilbert-Schmidt operators and the Hilbert space tensor product. In particular, in finite dimensions we shall also regard it
as a linear isomorphism $\operatorname{vec}:\B(H,K)\rightarrow K\otimes\overline H$.
Quantum channels are written in the Schrödinger picture. Thus a channel from
\(H\) to \(K\) is a completely positive trace-preserving map
\( 
\Gamma:S^1(H)\to S^1(K).
\)

When \(H\) is finite dimensional, we often identify \(S^1(H)\)
and \(\mathcal B(H)\) as vector spaces, while keeping track of their
different roles. In particular, for a finite set \(X\), we continue to write
\(M_X\) for the matrix space \(\mathcal B(\mathbb C^X)\), also viewed as the
trace-class space \(S^1(\mathbb C^X)\) through this identification.

If \(\mathcal A\) is a \(C^*\)-algebra, its opposite \(C^*\)-algebra is
denoted by \(\mathcal A^{\op}\). As a vector space and an involutive normed
space it is the same as \(\mathcal A\), but the multiplication is reversed.
We write \(x^{\op}\in\mathcal A^{\op}\) for the element corresponding to
\(x\in\mathcal A\); thus
\( 
x^{\op}y^{\op}=(yx)^{\op}\) and
\((x^{\op})^*=(x^*)^{\op}.
\)
When no confusion arises, we suppress the symbol \({}^{\op}\) in the
elements of \(\A^{\op}\).

A von Neumann algebra is a  \(*\)-subalgebra
\(\mathcal M\subseteq \B(H)\) such that $ \cl M= \cl M''$.  Its
commutant is
\[
\mathcal M'
=
\{a\in \B(H):am=ma\text{ for all }m\in\mathcal M\}.
\]
All von Neumann algebras appearing in the sequel are assumed to be non-degenerate.
We denote the positive cone of \(\mathcal M\) by \(\mathcal M_+\) and its projection lattice by $\mathcal P(\mathcal M)$. If $ \M \subseteq \B(H)$, $ \N \subseteq \B(K)$ are von Neumann algebras, $ \M \bar \otimes \N$ denotes their normal spatial tensor product.  Throughout the paper, at least one of the von Neumann algebras appearing in the tensor product will be finite dimensional, so the algebraic tensor product is already closed and thus coincides with the normal spatial one. Hence we suppress the bar. 
Similarly, for Hilbert spaces.

\section{Projection-test quantum  games} \label{sec:projection}

\subsection{Cooperative quantum games and operator spaces} \label{subsec:projectiontest}

Quantum-input/quantum-output variants of non-local games have been studied in several forms. The model considered here follows Leung--Toner--Watrous, where a two-player cooperative quantum game is specified by a referee who sends quantum registers to the players, receives quantum registers in return, and applies a two-outcome measurement to the joint returned system \cite{LTW13}. Variants of this framework have also been studied in the rank-one setting \cite{CJPP15}, in finite-rank quantum games \cite{CLTT23}, and more recently in \cite{AJP26}.

Here, we focus on the input-output transformation viewpoint.  We suppress the
particular physical form of the players' local operations and retain the
linear transformation tested by the referee.  This leads first to the
operator-space model following the description of \cite{LTW13}, and then to its
finite-dimensional von Neumann algebraic generalisation. In this model, there are finite-dimensional input Hilbert spaces \(H_X,H_Y\), output
Hilbert spaces \(H_A,H_B\), and a referee space \(H_R\). Thus we write $H_{\rm in}=H_X \otimes H_Y$, and $H_{\rm out}= H_A \otimes H_B$ 
and view the referee's state as
\( 
\psi\in H_{\rm in}\otimes H_R.
\)
The accepting measurement is represented by a projection
\( 
Q\in\mathcal P(\B(H_{\rm out}\otimes H_R)).
\)
A \emph{projection-test quantum game} is therefore a pair
\[
G=(\psi,Q),
\qquad
\psi\in H_{\rm in}\otimes H_R,
\qquad
Q\in\mathcal P(\B(H_{\rm out}\otimes H_R)).
\]
The game is played as follows.
\begin{enumerate}
    \item The referee prepares the input state
    \[
    \psi\in H_{\rm in}\otimes H_R.
    \]

    \item The referee sends the \(H_{\rm in}\)-register to the players and
    keeps the \(H_R\)-register.

    \item The players apply a transformation according to some strategy
    \[
    T:H_{\rm in}\to H_{\rm out}.
    \]
    Since they do not act on the referee's private register, the total
    transformation is
    \[
    T\otimes 1_{H_R}:
    H_{\rm in}\otimes H_R\to H_{\rm out}\otimes H_R.
    \]

    \item The referee receives the \(H_{\rm out}\)-register and measures 
    \[
    (T\otimes 1_{H_R})\psi
    \]
    with the two-outcome measurement \(\{Q,Q^\perp\}\).

    \item The players win precisely when the outcome lies in the accepting
    subspace \(Q(H_{\rm out}\otimes H_R)\). 
\end{enumerate}
The operator space of the winning/accepted transformations, namely, the \emph{winning transformation space} is
\[
\mathcal U_{\psi,Q}
=
\{T\in \B(H_{\rm in},H_{\rm out}):
Q^\perp(T\otimes 1_{H_R})\psi=0\}.
\]

\begin{proposition}\label{prop:operator-space-projection-test}
Let \(H_{\rm in}\) and \(H_{\rm out}\) be finite-dimensional Hilbert spaces,
and let \( \mathcal U\subseteq \B(H_{\rm in},H_{\rm out})\) be an operator
space. Then \(\mathcal U\) is the winning transformation space of a quantum game. More precisely, there are a referee Hilbert space \(H_R\), a
unit vector \(\psi\in H_{\rm in}\otimes H_R\), and a projection
\(Q \in \B(H_{\rm out}\otimes H_R)\) such that
\( 
\mathcal U_{\psi,Q_{}}=\mathcal U.
\)
\end{proposition}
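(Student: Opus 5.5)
We take the referee space to be a copy of the input, \(H_R=\overline{H_{\rm in}}\), and the referee's state to be the maximally entangled vector \(\psi=\Omega=d^{-1/2}\sum_i e_i\otimes\overline{e_i}\), where \(d=\dim H_{\rm in}\) and \((e_i)\) is an orthonormal basis of \(H_{\rm in}\). The idea is that the Choi-type vector \((T\otimes 1)\Omega\) records \(T\) faithfully, so a linear constraint on these vectors can cut out exactly \(\mathcal U\).

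First we compute \((T\otimes 1_{H_R})\Omega = d^{-1/2}\sum_i Te_i\otimes \overline{e_i}\). Since \(T=\sum_i (Te_i)e_i^*\), this equals \(d^{-1/2}\operatorname{vec}(T)\), viewing \(\operatorname{vec}:\B(H_{\rm in},H_{\rm out})\to H_{\rm out}\otimes\overline{H_{\rm in}}\). By the notation section, \(\operatorname{vec}\) is a linear isomorphism in finite dimensions, so \(T\mapsto (T\otimes 1)\Omega\) is injective.

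Next we define the subspace \(E=\operatorname{vec}(\mathcal U)\subseteq H_{\rm out}\otimes H_R\). It is closed because everything is finite dimensional. We set \(Q=P_E\). Then \(Q^\perp(T\otimes 1)\Omega=0\) holds if and only if \(\operatorname{vec}(T)\in E\). By injectivity of \(\operatorname{vec}\), this holds if and only if \(T\in\mathcal U\). Hence \(\mathcal U_{\Omega,Q}=\mathcal U\).

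There is no real obstacle. The only point needing care is the identification of \((T\otimes 1)\Omega\) with a scalar multiple of \(\operatorname{vec}(T)\), which requires consistent conjugate-linear conventions on \(\overline{H_{\rm in}}\). The degenerate cases \(\mathcal U=0\) (take \(Q=0\)) and \(\mathcal U=\B(H_{\rm in},H_{\rm out})\) (take \(Q=1\)) are covered automatically by the same construction.
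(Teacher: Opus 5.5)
Your proposal is correct and follows the paper's own proof: you take the referee space \(H_R=\overline{H_{\rm in}}\), the maximally entangled input \(\Omega\), and \(Q=P_{\operatorname{vec}(\mathcal U)}\). You then conclude from \((T\otimes 1)\Omega=d^{-1/2}\operatorname{vec}(T)\) and the injectivity of vectorisation, exactly as the paper does.
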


\begin{proof}
Let \(d=\dim H_{\rm in}\), choose an orthonormal basis
\((e_i)_{i=1}^d\) of \(H_{\rm in}\), and set
\(H_R=\overline{H_{\rm in}}\). Put
\( 
\psi=\Omega =d^{-1/2}\sum_{i=1}^d e_i\otimes \overline e_i
\in H_{\rm in}\otimes\overline{H_{\rm in}},
\)
to be the maximally entangled state.
We use the vectorisation
\( 
\operatorname{vec}:\B(H_{\rm in},H_{\rm out})
\rightarrow
H_{\rm out}\otimes\overline{H_{\rm in}}\), \( 
\operatorname{vec}(T)=\sum_{i=1}^d Te_i\otimes\overline e_i .
\)
Then
\[
(T\otimes 1_{H_R})\psi
=
d^{-1/2}\operatorname{vec}(T).
\]
Let \(Q\) be the projection of
\(H_{\rm out}\otimes\overline{H_{\rm in}}\) onto
\(\operatorname{vec}(\mathcal U)\). Then
\( 
Q^{\perp}(T\otimes 1_{H_R})\psi=0
\)
if and only if
\( 
\operatorname{vec}(T)\in\operatorname{vec}(\mathcal U).
\)
Since vectorisation is injective, this is equivalent to \(T\in\mathcal U\).
Hence \(\mathcal U_{\psi,Q}=\mathcal U\).
\end{proof}

Operationally, the projection-test realisation of an operator space
\(\mathcal U\subseteq \B(H_{\rm in},H_{\rm out})\) is a vectorised membership
test for transformations.  
The maximally entangled input state ensures that no tested information is lost: applying \(T\) to one leg produces the vectorisation of the whole transformation \(T\).  The accepting projection \(Q=P_{\operatorname{vec}(\mathcal U)}\) then tests whether this vectorised transformation lies in the prescribed operator space. Hence, the projection-test game accepts exactly those transformations \(T\) that belong to \(\mathcal U\).

Let \(H\) and \(K\) be finite-dimensional Hilbert spaces. Every vector
\(\psi\in H\otimes K\) admits a Schmidt decomposition 
\[
\psi=\sum_{i=1}^r s_i\,\xi_i\otimes\eta_i,
\qquad s_i>0,
\]
where \((\xi_i)_i\) and \((\eta_i)_i\) are orthonormal families in \(H\) and
\(K\), respectively. The integer \(r\) is called the \emph{Schmidt rank} of
\(\psi\).

If \(K = \overline{H}\),  we say that \(\psi\) has
\emph{full Schmidt rank} 
 if it admits a Schmidt decomposition
\( 
\psi=\sum_{i=1}^{\dim H}
s_i\,\xi_i\otimes\overline{\eta_i}\), with  \( s_i>0,
\)
where \((\xi_i)_i\) and \((\eta_i)_i\) are orthonormal bases of \(H\).

\begin{lemma}
Let \(H\) be  a finite-dimensional Hilbert space.
A vector \(\psi\in H\otimes\overline H\) has full Schmidt rank if and only if
there exists an invertible operator \(D\in \B(H)\) such that
\[
\psi=(D\otimes 1_{\overline H})\Omega .
\]
\end{lemma}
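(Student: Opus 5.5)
The plan is to use the vectorisation identity from the proof of Proposition~\ref{prop:operator-space-projection-test}. For $T\in\B(H)$ and $\Omega=d^{-1/2}\sum_i e_i\otimes\overline e_i$ we have $(T\otimes 1_{\overline H})\Omega=d^{-1/2}\operatorname{vec}(T)$, where $d=\dim H$. Since $\operatorname{vec}:\B(H)\to H\otimes\overline H$ is a linear bijection, every $\psi\in H\otimes\overline H$ can be written uniquely as
\[
\psi=(D_\psi\otimes 1_{\overline H})\Omega,\qquad D_\psi=d^{1/2}\operatorname{vec}^{-1}(\psi).
\]
So the lemma reduces to the claim that $\psi$ has full Schmidt rank exactly when $D_\psi$ is invertible.

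For the forward direction, suppose $\psi=\sum_{i=1}^{d}s_i\,\xi_i\otimes\overline{\eta_i}$ with $s_i>0$ and $(\xi_i)$, $(\eta_i)$ orthonormal bases. Since $\operatorname{vec}(\xi\eta^*)=\xi\otimes\overline\eta$, we get
\[
D_\psi=d^{1/2}\sum_i s_i\,\xi_i\eta_i^*.
\]
This operator maps the orthonormal basis $(\eta_i)$ to the orthonormal basis $(d^{1/2}s_i\xi_i)$ up to positive scalars, so it is invertible. In fact this is a singular value decomposition with all singular values $d^{1/2}s_i>0$.

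For the converse, suppose $\psi=(D\otimes 1)\Omega$ with $D$ invertible. Then $D=d^{1/2}\operatorname{vec}^{-1}(\psi)$ by uniqueness. Take a singular value decomposition $D=\sum_{i=1}^d \sigma_i\,\xi_i\eta_i^*$, where $(\xi_i)$ and $(\eta_i)$ are orthonormal bases and each $\sigma_i>0$ because $D$ is invertible. Applying $\operatorname{vec}$ gives
\[
\psi=d^{-1/2}\sum_i\sigma_i\,\xi_i\otimes\overline{\eta_i},
\]
which is a Schmidt decomposition with $\dim H$ strictly positive coefficients. Hence $\psi$ has full Schmidt rank.

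The only point needing care is the antilinear bookkeeping in $\overline H$. One must check that $\operatorname{vec}(\xi\eta^*)=\xi\otimes\overline\eta$ is linear in $\xi$ and conjugate-linear in $\eta$, consistent with $\lambda\overline\eta=\overline{\overline\lambda\eta}$. One must also check that the basis-dependent formula $\operatorname{vec}(T)=\sum_i Te_i\otimes\overline e_i$ agrees with the rank-one definition, which is immediate by expanding $T=\sum_{i}Te_i\,e_i^*$. I do not expect any genuine obstacle: the result is essentially the singular value decomposition transported through $\operatorname{vec}$.
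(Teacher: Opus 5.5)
Your proof is correct and follows essentially the same route as the paper. Both arguments realise $\psi$ as $(D\otimes 1)\Omega$ via inverse vectorisation and then match Schmidt decompositions of $\psi$ with singular value decompositions of $D$. The one small difference is in the forward direction: you read off $D_\psi=d^{1/2}\sum_i s_i\xi_i\eta_i^*$ directly from the given Schmidt decomposition. This sidesteps the paper's implicit use of the fact that the Schmidt rank does not depend on the chosen decomposition.
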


\begin{proof}
Every vector \(\psi\in H\otimes\overline H\) can be written uniquely as
\( 
\psi=\sum_{i,j}a_{ij}e_i\otimes\overline e_j .
\)
Define \(D\in \B(H)\) by
\( 
D e_j=(\dim H)^{1/2}\sum_i a_{ij}e_i .
\)
Then
\[
(D\otimes 1)\Omega
=
(\dim H)^{-1/2}\sum_j D e_j\otimes\overline e_j
=
\sum_{i,j}a_{ij}e_i\otimes\overline e_j
=
\psi .
\]
Thus, every vector has the form \((D\otimes1)\Omega\). It remains to identify full Schmidt rank.
Let
\( 
D=\sum_{k=1}^r s_k u_kv_k^*
\)
be a singular value decomposition, where \(r=\operatorname{rank}D\) and
\(s_k>0\). 
We have
\[
(D\otimes1)\Omega
=
(\dim H)^{-1/2}\sum_{k=1}^r s_k u_k\otimes\overline{v_k}.
\]
This is a Schmidt decomposition, up to the normalising scalar. Hence, the
Schmidt rank of \(\psi\) is equal to \(\operatorname{rank}D\). Therefore
\(\psi\) has full Schmidt rank if and only if \(D\) is invertible.
\end{proof}

\begin{remark} \label{rmk:fullrank} \rm
The maximally entangled vector in Proposition
\ref{prop:operator-space-projection-test} is only a canonical choice. It suffices to pick a vector with full Schmidt rank. Indeed, let 
\( 
\psi\in H_{\rm in}\otimes \overline{H_{\rm in}}
\)
be a unit vector with full Schmidt rank. The previous lemma gives an invertible
operator \(D\in \B(H_{\rm in})\) such that
\[
\psi=(D\otimes 1)\Omega.
\]
Thus, for every \(T\in \B(H_{\rm in},H_{\rm out})\),
\[
(T\otimes1)\psi
=
(TD\otimes1)\Omega
=
(\dim H_{\rm in})^{-1/2}\operatorname{vec}(TD).
\]
The test with input state \(\psi\) therefore sees \(T\) through the operator
\(TD\). Since \(D\) is invertible, no information about \(T\) is lost. 
Consequently, if
\(\mathcal U\subseteq \B(H_{\rm in},H_{\rm out})\) is an operator space and
\(Q_{\psi,\mathcal U}\) is the projection onto
\( 
\operatorname{vec}(\mathcal U D)
=
\{\operatorname{vec}(SD):S\in\mathcal U\},
\)
then
\( 
Q_{\psi,\mathcal U}^{\perp}(T\otimes1)\psi=0\)   if and only if  \(\operatorname{vec}(TD)\in\operatorname{vec}(\mathcal U D)\)
if and only if \(TD\in \mathcal U D\), and since \(D\) is invertible this is
equivalent to \(T\in\mathcal U\). Hence
\( 
\mathcal U_{\psi,Q_{\psi,\mathcal U}}=\mathcal U.
\)
Thus, any full-Schmidt-rank input state gives a realisation of the operator space as a winning transformation space. We note that full Schmidt rank is the quantum analogue of full support of a classical
question distribution, while the maximally entangled state plays the role of the uniform distribution in the classical case. 
\end{remark}

We conclude that the realisation of an operator space as a projection-test game is not unique.  As we saw in the previous remark one may
replace the maximally entangled vector by any full-Schmidt-rank input vector,
provided the accepting projection is adjusted accordingly.    The same operator space may
therefore admit many different projection-test presentations.


\subsection{Classical non-local games}\label{subsec:class}

Let \(G=(X,Y,A,B,\lambda)\) be a classical non-local game, and let \(\pi\)
be a probability distribution on \(X\times Y\) with full support. Put
\( 
H_{\rm in}=\mathbb C^X\otimes\mathbb C^Y\),
 \(H_{\rm out}=\mathbb C^A\otimes\mathbb C^B.
\)
The corresponding quantum game uses a referee register
\( 
H_R=\mathbb C^X\otimes\mathbb C^Y,
\)
which stores a coherent copy of the classical question pair. We place this
register on the right leg and set
\[
\psi_\pi
=
\sum_{x\in X,\ y\in Y}
\sqrt{\pi(x,y)}\,
(e_x\otimes e_y)_{\rm in}\otimes (e_x\otimes e_y)_{R}
\in H_{\rm in}\otimes H_R.
\]

For each \((x,y)\in X\times Y\), let \(P_{x,y}\in \B(H_{\rm out})\) be the
projection onto the winning answer subspace
\( 
\operatorname{span}\{e_a\otimes e_b:\lambda(a,b,x,y)=1\}.
\)
The accepting projection is
\[
Q_\lambda
=
\sum_{x\in X,\ y\in Y}
P_{x,y}\otimes \varepsilon_{(x,y),(x,y)}
\in \B(H_{\rm out}\otimes H_R).
\]
Thus, \(Q_\lambda\) is block diagonal with respect to the referee's classical
question register. In the \((x,y)\)-block, it tests whether the produced
answer lies in the winning answer subspace for that same question.

Let \(T\in \B(H_{\rm in},H_{\rm out})\). Then
\[
Q_\lambda^\perp(T\otimes 1_{H_R})\psi_\pi
=
\sum_{x,y}\sqrt{\pi(x,y)}\,
P_{x,y}^\perp T(e_x\otimes e_y)
\otimes(e_x\otimes e_y)_{R}.
\]
Since the vectors \((e_x\otimes e_y)_{H_R}\) are mutually orthogonal and
since \(\pi\) has full support, we have
\[
Q_\lambda^\perp(T\otimes 1_{H_R})\psi_\pi=0
\quad\Longleftrightarrow\quad
P_{x,y}^\perp T(e_x\otimes e_y)=0
\quad\text{for all }(x,y)\in X\times Y.
\]
Equivalently,
\( 
T(e_x\otimes e_y)\in P_{x,y}H_{\rm out}\)
for all \((x,y)\in X\times Y.
\)

By the previous equivalence, the winning transformation space of the quantum game is therefore
\[
\mathcal U_{\psi_\pi,Q_\lambda}
=
\{T\in \B(H_{\rm in},H_{\rm out}):
T(e_x\otimes e_y)\in P_{x,y}H_{\rm out}
\text{ for all }(x,y)\in X\times Y\}.
\]
Hence, this is precisely the operator space associated with the winning
relation:
\[
\mathcal U_{\psi_\pi,Q_\lambda}
=
\operatorname{span}
\{\varepsilon_{(a,b),(x,y)}:\lambda(a,b,x,y)=1\}.
\]
Indeed, the condition above says exactly that for any $ T \in \mathcal U_{\psi_\pi,Q_\lambda}$ the matrix coefficient 
\(\langle e_a\otimes e_b,T(e_x\otimes e_y)\rangle\) vanishes whenever
\(\lambda(a,b,x,y)=0\).



\subsection{Operator-algebraic generalisation} \label{subsec:abst_proj}

The quantum game formalism is compatible with finite-dimensional von Neumann algebraic systems once rank-one vector tests are replaced by general projection tests. Operationally, this allows the players' systems to be modelled by arbitrary finite-dimensional von Neumann algebras, rather than only full matrix algebras. In particular, the formalism includes quantum systems with superselection rules, equivalently classical mixtures of quantum systems \cite{Wea21,KMP04}.

Let
\(\mathcal X,\mathcal Y,\mathcal A,\mathcal B\) be finite-dimensional von
Neumann algebras acting on Hilbert spaces
\(H_X\), \(H_Y\), \(H_A\), \(H_B\), respectively.  Let also \(\cl R \subseteq \B(H_R)\) be a possibly infinite-dimensional von Neumann algebra. An $\cl R$-\emph{projection-test quantum game} from $(\mathcal X,\cl Y)$ to $ (\cl A,\cl B)$ is a pair $ G= (P,Q)$, where 
\[
P\in\mathcal P((\cl X \otimes \cl Y) \otimes \cl R),
\qquad
Q\in\mathcal P((\cl A \otimes \cl B)\otimes \cl R).
\]

The projection \(P\) specifies the allowed input subspace, while \(Q\)
specifies the accepting output subspace. In the generalised version,  the \emph{winning transformation space} is
\[
\mathcal U_{P,Q}
=
\{T\in \B(H_{X} \otimes H_Y,H_{A} \otimes H_B):
Q^\perp(T\otimes 1_{\cl R})P=0\}.
\]

When $ \cl X= \B(H_{X})$, $ \cl Y= \B(H_Y)$, $ \cl A= \B(H_A)$, $ \cl B= \B(H_{B})$,   $ \cl R= \B(H_R)$ and $ P= \psi\psi^*$ for a unit vector $ \psi \in (H_{X} \otimes H_Y) \otimes H_{R}$ this reduces to the projection-test quantum game $ G=(\psi,Q)$ introduced in Subsection \ref{subsec:projectiontest}.

When $\cl X= D_X$ and $ \cl Y= D_Y$ we say that the $\cl R$-projection-test quantum game $G$ is  \emph{classical-to-quantum}; when $ \cl A=D_A$ and $\cl B= D_B$ we say that $G$ is  \emph{quantum-to-classical}; when both the latter hold we say that $G$ is  \emph{classical}.

As we saw in Subsection~\ref{subsec:class}, every classical non-local game gives rise to a 
projection-test quantum game whose winning transformation space consists of the matrix units supported on the rule function. In particular, we may choose a realisation as a classical $\cl D_{X\times Y}$-projection-test quantum game. Indeed, let \(G=(X,Y,A,B,\lambda)\) be a classical non-local game.
For \((x,y)\in X\times Y\), let \(P_{x,y}\in D_A\otimes D_B\) be the projection onto
\( 
    \operatorname{span}
    \left\{
        e_a\otimes e_b:
        \lambda(a,b,x,y)=1
    \right\}.
\)
We may then choose
\[
    P_{\mathrm{cl}}
    =
    \sum_{x\in X,\,y\in Y}(\varepsilon_{x,x}\otimes\varepsilon_{y,y})\otimes \varepsilon_{(x,y),(x,y)}
    \in
    (D_X\otimes D_Y)\otimes D_{X\times Y}
\]
and
\[
    Q_\lambda
    =
    \sum_{x\in X,\,y\in Y}
        P_{x,y}\otimes \varepsilon_{(x,y),(x,y)}    \in
    (D_A\otimes D_B)\otimes D_{X\times Y}.
\]
For \(T\in B(H_{\mathrm{in}},H_{\mathrm{out}})\), we have
\[
    Q_\lambda^\perp
    (T\otimes 1_{\mathcal R})P_{\mathrm{cl}}
    =
    \sum_{x,y}
        P_{x,y}^{\perp}T(\varepsilon_{x,x}\otimes\varepsilon_{y,y})\otimes \varepsilon_{(x,y),(x,y)}.
\]
Since the projections \(\varepsilon_{(x,y),(x,y)}\) are mutually orthogonal, it follows
that
\[
    Q_\lambda^\perp
    (T\otimes 1_{\mathcal R})P_{\mathrm{cl}}=0
    \quad\Longleftrightarrow\quad
    P_{x,y}^{\perp}T(e_x\otimes e_y)=0
    \quad
    \text{for all }(x,y)\in X\times Y.
\]
Consequently,
\[
    U_{P_{\mathrm{cl}},Q_\lambda}
    =
    \operatorname{span}
    \left\{
        \varepsilon_{(a,b),(x,y)}:
        \lambda(a,b,x,y)=1
    \right\},
\]
where the last equality holds for every full-support distribution
\(\pi\).
Thus every classical non-local game admits a classical
\(D_{X\times Y}\)-projection-test realisation that preserves both its
winning transformation space and its distribution-dependent value. 
One may verify that the converse is also true.

\medskip

Let \(H\) be a finite-dimensional Hilbert space.  If
\(\mathcal M\subseteq \B(H)\) is a  von Neumann algebra, we represent
its opposite algebra on the conjugate Hilbert space \(\overline H\) by
\[
a^{\op}\,\overline\xi=\overline{a^*\xi},
\qquad a\in\mathcal M,\ \xi\in H.
\]
Thus \(\mathcal M^{\op}\subseteq \B(\overline H)\).  With this convention, one
has
\( 
(\mathcal M^{\op})'=(\mathcal M')^{\op}\)
on \(\overline H.
\)
Consequently, if
\(\mathcal M\subseteq \B(H)\) and \(\mathcal N\subseteq \B(K)\), then \( \mathcal N\otimes\mathcal M^{\op} \subseteq \B(K \otimes \overline{H})\) and 
\( 
\big(\mathcal N\otimes\mathcal M^{\op}\big)'
=
\mathcal N'\otimes(\mathcal M')^{\op}
\)
by Tomita's theorem \cite[Theorem 5.9, Chapter IV]{TakesakiI}.

The following fact is standard (see for example \cite{Watrous2018}). We include a proof for completeness.
\begin{lemma}\label{lem:vec-bimodule-action-general}
Let \(H,K\) be finite-dimensional Hilbert spaces, and let
\( 
\operatorname{vec}:\B(H,K)\rightarrow K\otimes\overline H
\) 
be the canonical vectorisation, determined by
\( 
\operatorname{vec}(\xi\eta^*)=\xi\otimes\overline\eta\), \(\xi\in K,\ \eta\in H.
\) 
For \(a\in \B(H)\), let \(a^{\op}\in \B(\overline H)\) act by
\( 
a^{\op}\overline\eta=\overline{a^*\eta}.
\)
Then, for every \(b\in \B(K)\), \(a\in \B(H)\), and
\(T\in \B(H,K)\), one has
\[
\operatorname{vec}(bTa)
=
(b\otimes a^{\op})\operatorname{vec}(T).
\]
\end{lemma}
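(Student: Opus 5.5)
Both sides of the claimed identity are linear in \(T\), and \(\B(H,K)\) is spanned by rank-one operators \(\xi\eta^*\) with \(\xi\in K\), \(\eta\in H\) (finite dimensions). So I would reduce to the case \(T=\xi\eta^*\) and check the identity on these generators.

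The first step computes the left-hand side. One has \(bTa=b\xi\eta^*a=(b\xi)(a^*\eta)^*\), because for \(h\in H\)
\[
(\xi\eta^*)(ah)=\langle \eta,ah\rangle\xi=\langle a^*\eta,h\rangle\xi .
\]
Hence \(\operatorname{vec}(bTa)=b\xi\otimes\overline{a^*\eta}\).

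The second step computes the right-hand side using the definition \(a^{\op}\overline\eta=\overline{a^*\eta}\):
\[
(b\otimes a^{\op})(\xi\otimes\overline\eta)=b\xi\otimes\overline{a^*\eta}.
\]
This agrees with the first step, and linearity finishes the argument.

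The only point needing care, which I expect to be the main (though mild) obstacle, is well-definedness of the objects involved.
\begin{itemize}
\item \(\operatorname{vec}\) must be a well-defined linear map. The assignment \(\xi\eta^*\mapsto\xi\otimes\overline\eta\) is sesquilinear in the same way in both slots: conjugate-linear in \(\eta\), since \(\overline{\lambda\eta}=\overline\lambda\,\overline\eta\), and \((\lambda\eta)^*=\overline\lambda\eta^*\). So it extends linearly, for instance via the basis formula \(\operatorname{vec}(T)=\sum_i Te_i\otimes\overline{e_i}\) used earlier.
\item \(a^{\op}\) must be a linear operator on \(\overline H\). Indeed \(a^{\op}(\lambda\overline\eta)=a^{\op}\overline{\overline\lambda\eta}=\overline{\overline\lambda a^*\eta}=\lambda\overline{a^*\eta}\).
\end{itemize}
With these two facts in place, the rank-one computation suffices.
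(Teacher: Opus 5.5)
Your proof is correct and is essentially the paper's argument: reduce by linearity to rank-one $T=\xi\eta^*$, observe that $b(\xi\eta^*)a=(b\xi)(a^*\eta)^*$, and compare with $(b\otimes a^{\op})(\xi\otimes\overline\eta)=b\xi\otimes\overline{a^*\eta}$. Your well-definedness checks on $\operatorname{vec}$ and the linearity of $a^{\op}$ are also correct; the paper simply takes these for granted.
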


\begin{proof}
It suffices to verify the identity for rank-one operators
\(T=\xi\eta^*\).  For \(h\in H\),
\[
b(\xi\eta^*)a(h)
=
b\big(\langle \eta,ah\rangle \xi\big)
=
\langle a^*\eta,h\rangle b\xi.
\]
Hence,
\( 
b(\xi\eta^*)a=(b\xi)(a^*\eta)^*.
\)
Applying vectorisation gives
\[
\operatorname{vec}\big(b(\xi\eta^*)a\big)
=
b\xi\otimes\overline{a^*\eta}
=
(b\otimes a^{\op})(\xi\otimes\overline\eta)
=
(b\otimes a^{\op})\operatorname{vec}(\xi\eta^*).
\]
The result follows by linearity.
\end{proof}

We show the analogue of Proposition \ref{prop:operator-space-projection-test} for operator spaces carrying a bimodule structure. In the following think of the underlying spaces as $H_{\rm in} := H_X \otimes H_Y$, $ H_{\rm out}:= H_A \otimes H_B$, \( \cl M: =\cl X \otimes \cl Y \subseteq \B(H_{\rm in})\) and $ \cl N:= \cl A \otimes \cl B \subseteq \B(H_{\rm out})$, although the result holds in general.

\begin{theorem} \label{thm:abstract-projection-test}
Let \(\mathcal M\subseteq \B(H_{\rm in})\) and
\(\mathcal N\subseteq \B(H_{\rm out})\) be finite-dimensional von Neumann
algebras. A subspace
\(\mathcal U\subseteq \B(H_{\rm in},H_{\rm out})\) is a
\(\mathcal N'\)-\(\mathcal M'\) bimodule if and only if there exist a von Neumann algebra \( \cl R\subseteq \B(H_R)\) and projections
\( 
P\in\mathcal P(\mathcal M\otimes \cl R) \), and
\(Q\in\mathcal P(\mathcal N\otimes \cl R),
\)
such that $ \mathcal U= \mathcal U_{P,Q}$. In particular, in the converse direction one may choose
\[
\cl R= \cl M^{\rm op},
\qquad
Q=P_{\operatorname{vec}(\mathcal U)}, \qquad P= P_{\operatorname{vec}(\mathcal M')}.
\]
\end{theorem}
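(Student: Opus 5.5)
The forward implication will be a direct commutation argument. The converse will use the vectorisation of Lemma~\ref{lem:vec-bimodule-action-general} together with the commutant formula $\big(\mathcal N\otimes\mathcal M^{\op}\big)'=\mathcal N'\otimes(\mathcal M')^{\op}$ recorded just before it.

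\emph{Forward direction.} Assume $\mathcal U=\mathcal U_{P,Q}$ with $P\in\mathcal P(\mathcal M\otimes\cl R)$ and $Q\in\mathcal P(\mathcal N\otimes\cl R)$. Clearly $\mathcal U_{P,Q}$ is a linear subspace. Let $a\in\mathcal N'$, $b\in\mathcal M'$ and $T\in\mathcal U$.
\begin{itemize}
\item The operator $a\otimes 1_{H_R}$ commutes with every elementary tensor of $\mathcal N\otimes\cl R$. Since $\cl R$ is arbitrary, we use that $\mathcal N\otimes\cl R$ is generated as a von Neumann algebra by these tensors. Hence $a\otimes1$ commutes with $Q$, and therefore with $Q^\perp$.
\item By the same argument, $b\otimes 1$ commutes with $P$.
\end{itemize}
Consequently
\[
Q^\perp((aTb)\otimes1)P=(a\otimes1)\,Q^\perp(T\otimes1)P\,(b\otimes1)=0,
\]
so $aTb\in\mathcal U$. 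Thus $\mathcal U$ is an $\mathcal N'$-$\mathcal M'$ bimodule.

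\emph{Converse direction.} Take $\cl R=\mathcal M^{\op}\subseteq\B(\overline{H_{\rm in}})$, $P=P_{\operatorname{vec}(\mathcal M')}$ and $Q=P_{\operatorname{vec}(\mathcal U)}$, where
\[
\operatorname{vec}(\mathcal M')\subseteq H_{\rm in}\otimes\overline{H_{\rm in}},\qquad
\operatorname{vec}(\mathcal U)\subseteq H_{\rm out}\otimes\overline{H_{\rm in}}.
\]

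The first step is to check that $P\in\mathcal M\otimes\mathcal M^{\op}$ and $Q\in\mathcal N\otimes\mathcal M^{\op}$.
\begin{itemize}
\item For $a,b\in\mathcal M'$ and $x\in\mathcal M'$, Lemma~\ref{lem:vec-bimodule-action-general} gives $(b\otimes a^{\op})\operatorname{vec}(x)=\operatorname{vec}(bxa)\in\operatorname{vec}(\mathcal M')$. So $\operatorname{vec}(\mathcal M')$ is invariant under the self-adjoint algebra $\mathcal M'\otimes(\mathcal M')^{\op}=(\mathcal M\otimes\mathcal M^{\op})'$. Its projection therefore commutes with this commutant, so $P\in(\mathcal M\otimes\mathcal M^{\op})''=\mathcal M\otimes\mathcal M^{\op}$.
\item In the same way, the bimodule property gives $(b\otimes a^{\op})\operatorname{vec}(T)=\operatorname{vec}(bTa)\in\operatorname{vec}(\mathcal U)$ for $b\in\mathcal N'$, $a\in\mathcal M'$ and $T\in\mathcal U$. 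Hence $Q$ commutes with $\mathcal N'\otimes(\mathcal M')^{\op}=(\mathcal N\otimes\mathcal M^{\op})'$, so $Q\in\mathcal N\otimes\mathcal M^{\op}$.
\end{itemize}
This step rests on the commutant identification obtained from Tomita's theorem and on the self-adjointness of the acting sets. It is the only place where real care is needed, mainly in keeping track of the $\op$-conventions on $\overline{H_{\rm in}}$.

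Finally, compute $\mathcal U_{P,Q}$. On rank-one operators, $(T\otimes1)(\xi\otimes\overline\eta)=T\xi\otimes\overline\eta=\operatorname{vec}(T\xi\eta^*)$, so by linearity $(T\otimes1)\operatorname{vec}(x)=\operatorname{vec}(Tx)$. Since $(T\otimes1)P$ vanishes off $\operatorname{vec}(\mathcal M')$, the condition $Q^\perp(T\otimes1)P=0$ is equivalent to
\[
\operatorname{vec}(Tx)\in\operatorname{vec}(\mathcal U)\quad\text{for all }x\in\mathcal M',
\]
that is, $Tx\in\mathcal U$ for all $x\in\mathcal M'$, by injectivity of $\operatorname{vec}$.
\begin{itemize}
\item Taking $x=1\in\mathcal M'$ gives $T\in\mathcal U$.
\item Conversely, if $T\in\mathcal U$, then $Tx\in\mathcal U$ for all $x\in\mathcal M'$ because $\mathcal U$ is a right $\mathcal M'$-module.
\end{itemize}
Hence $\mathcal U_{P,Q}=\mathcal U$.
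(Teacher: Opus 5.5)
Your proposal is correct and follows essentially the same route as the paper. The forward direction is the same commutation argument with $\mathcal N'\otimes 1$ and $\mathcal M'\otimes 1$. The converse uses the same canonical choice $\cl R=\mathcal M^{\op}$, $P=P_{\operatorname{vec}(\mathcal M')}$, $Q=P_{\operatorname{vec}(\mathcal U)}$: membership is checked via Lemma~\ref{lem:vec-bimodule-action-general} and the Tomita commutant formula, and $\mathcal U_{P,Q}=\mathcal U$ follows from $(T\otimes 1)\operatorname{vec}(x)=\operatorname{vec}(Tx)$, taking $x=1$ and using the right $\mathcal M'$-module property.
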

\begin{proof}
Suppose first that \(P\in\mathcal M\otimes \cl R\) and
\(Q\in\mathcal N\otimes \cl R\).  If
\(a\in\mathcal M'\), \(b\in\mathcal N'\), and
\(T\in\mathcal U_{P,Q}\), then \(P\) commutes with
\(a\otimes 1_{\cl R}\), while \(Q^\perp\) commutes with
\(b\otimes 1_{\cl R}\).  Hence,
\[
Q^\perp((bTa)\otimes 1_{\cl R})P
=
(b\otimes 1_{\cl R})Q^\perp(T\otimes 1_{\cl R})P(a\otimes 1_{\cl R})
=
0.
\]
Thus, \(bTa\in\mathcal U_{P,Q}\), and so
\(\mathcal U_{P,Q}\) is an
\(\mathcal N'\)-\(\mathcal M'\) bimodule.

Conversely, suppose that
\(\mathcal U\subseteq \B(H_{\rm in},H_{\rm out})\) is a
\(\mathcal N'\)-\(\mathcal M'\) bimodule.  Set
\(\cl R = \cl M^{\rm op}\), and let
\( 
P=P_{\operatorname{vec}(\mathcal M')} \) and
\( Q=P_{\operatorname{vec}(\mathcal U)}.
\)
We first check that these projections belong to the required algebras.  
If \(x'\in\mathcal N'\), \(y'\in\mathcal M'\), and \(T\in\mathcal U\), then by Lemma \ref{lem:vec-bimodule-action-general}
\[
(x'\otimes (y')^{\op})\operatorname{vec}(T)
=
\operatorname{vec}(x'Ty')
\in
\operatorname{vec}(\mathcal U),
\]
because \(\mathcal U\) is an \(\mathcal N'\)-\(\mathcal M'\) bimodule.  Hence
\(\operatorname{vec}(\mathcal U)\) is invariant under the finite-dimensional
\(*\)-algebra
\(\mathcal N'\otimes(\mathcal M')^{\op}\).  Therefore it is reducing for this
algebra, and its orthogonal projection \(Q=P_{\operatorname{vec}(\mathcal U)}\)
commutes with \(\mathcal N'\otimes(\mathcal M')^{\op}\).  Thus,
\[ 
Q\in
\big(\mathcal N'\otimes(\mathcal M')^{\op}\big)'
=
\mathcal N\otimes\mathcal M^{\op}
\subseteq
\mathcal N\otimes \B(\overline{H_{\rm in}}).
\]
Similarly, since \(\mathcal M'\subseteq B(H_{\rm in})\) is an
\(\mathcal M'\)-\(\mathcal M'\) bimodule, the same argument gives
\[ 
P=P_{\operatorname{vec}(\mathcal M')}
\in
\big(\mathcal M'\otimes(\mathcal M')^{\op}\big)'
=
\mathcal M\otimes\mathcal M^{\op}
\subseteq
\mathcal M\otimes \B(\overline{H_{\rm in}}).
\]
It remains to show that \(\mathcal U=\mathcal U_{P,Q}\). Let \(T\in \B(H_{\rm in},H_{\rm out})\).  Since the range of \(P\) is
\(\operatorname{vec}(\mathcal M')\) and the range of \(Q\) is
\(\operatorname{vec}(\mathcal U)\), the condition
\(Q^\perp(T\otimes 1)P=0\) is equivalent to
\[
(T\otimes 1)\operatorname{vec}(a)\in\operatorname{vec}(\mathcal U),
\qquad a\in\mathcal M'.
\]
Using the vectorisation identity
\( 
(T\otimes 1)\operatorname{vec}(a)=\operatorname{vec}(Ta)\), \(a\in\mathcal M',
\)
this is equivalent to
\( 
Ta\in\mathcal U\),
\( a\in\mathcal M'.
\)
If this holds, then taking \(a=1\) gives \(T\in\mathcal U\).  Conversely, if
\(T\in\mathcal U\), then \(Ta\in\mathcal U\) for every
\(a\in\mathcal M'\), because \(\mathcal U\) is a right
\(\mathcal M'\)-module.  Hence \(Q^\perp(T\otimes 1)P=0\).

Therefore, \(\mathcal U_{P,Q}=\mathcal U\), with the stated choices of
\(\cl R\), \(P\), and \(Q\).  The proof is complete.
\end{proof}

If \(\mathcal M\subseteq \B(H)\) is a finite-dimensional von Neumann algebra, we shall write
\( 
\Pi_{\mathcal M}:=
P_{\operatorname{vec}(\mathcal M')}
\in \mathcal M\otimes\mathcal M^{\op}
\subseteq \B(H\otimes\overline H),
\)
where \(P_{\operatorname{vec}(\mathcal M')}\) denotes the orthogonal projection
onto the subspace
\( 
\operatorname{vec}(\mathcal M')\subseteq H\otimes\overline H.
\)

\begin{remark}\label{rem:many-projection-test-realisations} \rm
Theorem~\ref{thm:abstract-projection-test} recovers
Proposition~\ref{prop:operator-space-projection-test} as the special case
where 
\( 
\mathcal M=\B(H_{\rm in})\),
\( \mathcal N=\B(H_{\rm out})
\) and $ \cl R= \M^{\rm op} = \cl B(\overline H_{\rm in})$.
Indeed, then
\(\mathcal M'=\mathbb C1_{H_{\rm in}}\) and
\(\mathcal N'=\mathbb C1_{H_{\rm out}}\), so that every subspace
\(\mathcal U\subseteq \B(H_{\rm in},H_{\rm out})\) is automatically an
\(\mathcal N'\)-\(\mathcal M'\) bimodule.  The construction in the theorem
takes \(H_R=\overline{H_{\rm in}}\) and
\( 
P=P_{\operatorname{vec}(\mathbb C1)}
=
\Omega\Omega^*,
\)
where
\( 
\Omega=(\dim H_{\rm in})^{-1/2}\operatorname{vec}(1_{H_{\rm in}})
\in H_{\rm in}\otimes\overline{H_{\rm in}}
\)
is the maximally entangled state.  It also takes
\(Q=P_{\operatorname{vec}(\mathcal U)}\).  Thus, the condition
\( 
Q^\perp(T\otimes 1_{\overline{H_{\rm in}}})P=0
\)
is equivalent to
\(
Q^\perp(T\otimes 1_{\overline{H_{\rm in}}})\Omega=0,
\)
and this is precisely the projection-test realisation of
\(\mathcal U\) from Proposition~\ref{prop:operator-space-projection-test}.
\end{remark}

\subsection{Strategies and winning transformations}
\label{subs:strategies_coop}

Strategies for general projection-test games are quantum channels between preduals, and perfectness is expressed in terms of support projections. We recall the relevant definitions.  If \(\mathcal M\) is a von Neumann algebra, its predual \(\mathcal M_*\) is
the Banach space of normal linear functionals on \(\mathcal M\). It is
characterised by the canonical identification
\( 
(\mathcal M_*)^*=\mathcal M.
\)
The positive cone \(\mathcal M_*^+\) consists of the normal positive
functionals, and a normal state is an element \(\rho\in\mathcal M_*^+\) such
that \(\rho(1_{\mathcal M})=1\). In finite dimensions, every linear functional
is normal, so \(\mathcal M_*=\mathcal M^*\). We write
the duality between \(\mathcal M_*\) and \(\mathcal M\) as
\[
\langle \rho,x\rangle=\rho(x),
\qquad
\rho\in\mathcal M_*,
\quad
x\in\mathcal M.
\]
 If \(\mathcal M\) is a von Neumann algebra and
\(\rho\in\mathcal M_*^+\), its support projection \(s(\rho)\) is the smallest
projection \(e\in\mathcal M\) such that \(\rho=\rho e=e\rho\), where
\( 
(\rho e)(x)=\rho(ex)\) and
\((e\rho)(x)=\rho(xe).
\)
Thus \(s(\rho)\leq P\) if and only if \(\rho=\rho P=P\rho\).
We say that a normal state \(\rho\) is supported in \(P\) if
\(s(\rho)\leq P\).

In the operational formulation, a strategy is a
quantum channel, that is, a completely positive, trace-preserving map
\[
\Gamma: S^1(H_{\rm in}) \to  S^1(H_{\rm out}),
\]
usually required to belong to a prescribed resource class, such as local
operations with shared entanglement, one-way communication, unrestricted
joint operations, or a no-signalling class. The referee evaluates \(\Gamma\)
by applying \(\Gamma\otimes{\rm id}_{S^1(H_R)}\) to the input state  and then
measuring the output with the accepting projection. 
 
For a projection-test \(G=(\psi,Q)\), the winning probability is
\[
\omega_G(\Gamma)
=
\left\langle
(\Gamma\otimes{\rm id}_{S^1(H_R)})(\psi\psi^*),Q
\right\rangle .
\]
If \(\Gamma(\rho)=\sum_i T_i\rho T_i^*\), then
\[
(\Gamma\otimes{\rm id}_{S^1(H_R)})(\psi\psi^*)
=
\sum_i
(T_i\otimes1)\psi\psi^*(T_i\otimes1)^* .
\]
Thus, \(\Gamma\) wins perfectly, i.e., $ \omega_{G}(\Gamma)=1$, if and only if
\(Q^\perp(T_i\otimes1)\psi=0\) for every \(i\). Equivalently, if
\(\mathcal X_\Gamma=\operatorname{span}\{T_i:i\}\) denotes the Kraus space of $ \Gamma$,
then
\[
\Gamma\text{ wins perfectly for }G=(\psi,Q)
\quad\Longleftrightarrow\quad
\mathcal X_\Gamma\subseteq\mathcal U_{\psi,Q}.
\]
For a proof of the claims above  see \cite[Lemma 5.2]{Hoefer25}.


We now extend this to the setting of general finite-dimensional von Neumann algebras $\M\subseteq \B(H_{\rm in})$ and $\N \subseteq \B(H_{\rm out})$.
Strategies for quantum input-output games over finite quantum spaces were
formulated in the Heisenberg picture by Bochniak--Kasprzak--Sołtan
\cite{BKS23}. We use the equivalent Schrödinger-picture predual formulation.
Thus, a \emph{quantum channel} is a completely positive state-preserving map
\[
\Gamma:\mathcal M_*
\to
\mathcal N_* .
\]
Equivalently, its dual is a normal unital completely positive map
\(\Gamma^*:\mathcal N\to  \mathcal M\),
defined by
\[
\langle\Gamma(\rho),z\rangle
=
\langle\rho,\Gamma^*(z)\rangle .
\]
 When $ \M = \B(H)$, we identify the predual of \(\mathcal B(H)\) with
\(S^1(H)\) through the pairing
\( 
\langle \rho,x\rangle=\Tr(\rho x)\),
\( \rho\in S^1(H)\), \(x\in \mathcal B(H)
\).

We shall use the following Kraus-space notation. Let
\(\mathcal M\subseteq \B(H_{\rm in})\) and
\(\mathcal N\subseteq \B(H_{\rm out})\) be finite-dimensional von Neumann
algebras, and let \(\Gamma:\mathcal M_*\to\mathcal N_*\) be a quantum channel. If
\( 
\Gamma^*(y)=\sum_{i=1}^r v_i^*yv_i\), \(y\in\mathcal N,
\)
then the predual channel is determined by
\[
\langle \Gamma(\rho),y\rangle
=
\langle \rho,\sum_{i=1}^r v_i^*yv_i\rangle,
\qquad
\rho\in\mathcal M_*,\ y\in\mathcal N .
\] 
With a slight abuse of notation we write $ \Gamma(\rho)=\sum_{i=1}^r v_i\rho v_i^*$ for the above expression.
The Kraus bimodule of \(\Gamma\) is the
\(\mathcal N'\)-\(\mathcal M'\) bimodule
\[
\mathcal X_\Gamma
=
\operatorname{span}\{n'v_i m':
n'\in\mathcal N',\ m'\in\mathcal M',\ 1\leq i\leq r\}
\subseteq \B(H_{\rm in},H_{\rm out}).
\]
In the notation of \cite{Daws26}, this bimodule is precisely the quantum relation associated with the adjoint 
map $\Gamma^*$. In particular, it is independent of the chosen
Kraus decomposition; see also \cite[Proposition~6.1]{KL26}.

Now consider an $\cl R$-projection-test quantum game \(G=(P,Q)\), and projections
\( 
P\in\mathcal P(\mathcal M\otimes \cl R)\) and
\( 
Q\in\mathcal P(\mathcal N\otimes \cl R).
\)
Given a strategy $\Gamma$ and a normal state $ \rho$ supported in $ P$, the \emph{winning probability} is 
\[
\omega_{G}(\Gamma,\rho)= \big\langle(\Gamma\otimes{\rm id}_{\cl R_{*}})(\rho),Q\big\rangle.
\]
A strategy \(\Gamma\) is \emph{perfect for \(G\)} if every normal state supported in
\(P\) is sent by
\(\Gamma\otimes{\rm id}_{\cl R_{*}}\) to a normal state supported in \(Q\). Equivalently,
for every normal state \(\rho\) with \(s(\rho)\leq P\),
\[
\big\langle(\Gamma\otimes{\rm id}_{\cl R_{*}})(\rho),Q^\perp\big\rangle=0.
\]
For a Kraus decomposition \(\Gamma(\rho)=\sum_iT_i\rho T_i^*\), this is
equivalent to
\( 
Q^\perp(T_i\otimes1_{\cl R})P=0\)
for every \(i\).
Hence, the quantum channel
\(
\Gamma\)  is perfect for \( G=(P,Q)\) 
if and only if
\( \mathcal X_\Gamma\subseteq\mathcal U_{P,Q}.
\)


\section{Projection-lattice games} \label{sec:projection_lattice}

For a  von Neumann algebra \(\M\subseteq \B(H)\), we denote by
\(
\mathcal{P}(\M)
\)
its projection lattice. The order is given by \(P\leq Q\) if \(PH\subseteq QH\), and the join
\( 
\vee_i P_i
\)
is the projection onto the closed linear span of the subspaces \(P_iH\).  The meet \(P\wedge Q\) is the largest projection below both \(P\) and \(Q\), equivalently, the projection onto \(PH\cap QH\). A map 
\( 
\varphi:\mathcal{P}(\M)\to\mathcal{P}(\N)
\)
is called \emph{zero-preserving} if $\varphi(0)=0$ and \emph{join-continuous}  if 
\[
\varphi\left(\bigvee_{i\in I} P_i\right)=\bigvee_{i\in I}\varphi(P_i)
\]
for every family \((P_i)_{i\in I}\subseteq\mathcal{P}(\M)\).
Equivalently, in finite dimensions,
\( 
\varphi(P\vee Q)=\varphi(P)\vee\varphi(Q)\),
\(  P,Q\in\mathcal{P}(\M).
\)

Join-continuous zero-preserving maps were first considered by  Erdos \cite{Erdos86} in the study of  reflexive operator spaces introduced by Loginov--Shulman  \cite{LS75}. These maps are equivalent to bilattices as defined by Shulman--Turowska in \cite{ST04}.

Let
\(H_1,H_2\) be Hilbert spaces and write \(\mathcal P_i=\mathcal P(\B(H_i))\).
If \(\mathcal U\subseteq \B(H_1,H_2)\) is a subspace, define
\[
\varphi_{\mathcal U}:\mathcal P_1\to\mathcal P_2,\qquad
\varphi_{\mathcal U}(Q)=P_{[\mathcal U QH_1]},
\]
where \(P_{[\mathcal U QH_1]}\) denotes the projection onto the closed linear span of
\(\{Tx:T\in\mathcal U,\ x\in QH_1\}\). Then \(\varphi_{\mathcal U}\)
preserves arbitrary joins and sends \(0\) to \(0\). Conversely, if
\(\varphi:\mathcal P_1\to\mathcal P_2\) is zero-preserving and join-continuous,
set
\[
\mathcal{U}_{\varphi}
=
\{T\in \B(H_1,H_2):\varphi(P)^\perp TP=0
\text{ for all }P\in\mathcal P_1\}.
\]
The reflexive hull of an operator space \(\mathcal U\) is
\[
\operatorname{Ref}\mathcal U
=
\{T\in \B(H_1,H_2):T\xi\in[\mathcal U\xi]
\text{ for every }\xi\in H_1\}.
\]
A subspace $ \mathcal  U$ is called \emph{reflexive} if $ \mathcal U= \operatorname{Ref}\mathcal U$. It can be seen that \(\operatorname{Ref}\mathcal U
=\mathcal U_{\varphi_{\mathcal U}}\). Hence \(\mathcal U\) is
reflexive precisely when
\( 
\mathcal U=\mathcal U_{\varphi_{\mathcal U}}.
\)

Thus,   a subspace
\(\mathcal U\subseteq \B(H_1,H_2)\) is reflexive if and only if
\(\mathcal U=\mathcal{U}_{\varphi}\) for some zero-preserving
join-continuous map \(\varphi:\mathcal P_1\to\mathcal P_2\).

\subsection{Classical rules as lattice maps} \label{sec:class_latt}

Let \(X,Y,A,B\) be finite sets and let \(D_X,D_Y,D_A,D_B\) denote the
corresponding diagonal algebras, acting on
\(\mathbb C^X,\mathbb C^Y,\mathbb C^A,\mathbb C^B\), respectively. We shall
freely identify \(D_X\otimes D_Y\) with \(D_{X\times Y}\). Thus every
projection in \(D_X\otimes D_Y\) is of the form
\( 
P_E=\sum_{(x,y)\in E} P_{x,y}\),
 \(E\subseteq X\times Y,
\)
where \(P_{x,y}=\varepsilon_{x,x}\otimes \varepsilon_{y,y}\) is the
rank-one projection onto \(\mathbb Ce_x\otimes \mathbb Ce_y\).
This gives a canonical identification
\[
\mathcal P(D_X\otimes D_Y)\cong \mathcal P(X\times Y),
\qquad
P_E\leftrightarrow E.
\]
Similarly, \(
\mathcal P(D_A\otimes D_B)\cong \mathcal P(A\times B).
\)
Under these identifications, the lattice operations on projections are just
the Boolean operations on subsets.
In particular, the atoms of \(\mathcal P(D_X\otimes D_Y)\) are precisely
the projections \(P_{x,y}\), corresponding to singleton question pairs
\(\{(x,y)\}\).

This observation of Todorov--Turowska~\cite[Section 10.3]{TT24} is the bridge between classical rules and
projection-lattice maps. A classical non-local game assigns, to each
question pair, a subset of admissible answer pairs:
\[
(x,y)\mapsto \lambda(x,y)
=
\{(a,b)\in A\times B:\lambda(a,b,x,y)=1\}.
\]
Equivalently, it assigns to each atom the output projection:
\[
P_{x,y} \mapsto \sum_{\lambda(a,b,x,y)=1}P_{a,b}.
\]
Since arbitrary projections in \(D_X\otimes D_Y\) are joins of atoms, the
rule extends uniquely by joins. Thus a classical rule predicate is exactly
the atomic data of a zero-preserving join-continuous map
\[
\varphi_{}:\mathcal P(D_X\otimes D_Y)\to \mathcal P(D_A\otimes D_B).
\]

We now make this correspondence explicit. Let
\(G=(X,Y,A,B,\lambda)\) be a non-local game, where
\(\lambda:A\times B\times X\times Y\to\{0,1\}\) is the rule predicate. We
write
\[
\Lambda_\lambda
=
\{((x,y),(a,b))\in (X\times Y)\times(A\times B):
\lambda(a,b,x,y)=1\}.
\]
Thus \(\lambda\) determines the rectangular operator space
\[
\mathcal U_{\Lambda_{\lambda}}
=
\operatorname{span}\{\varepsilon_{(a,b),(x,y)}:
((x,y),(a,b))\in\Lambda_{\lambda}\}
\subseteq
\mathcal B(\mathbb C^{X\times Y},\mathbb C^{A\times B}).
\]
Equivalently, \(\mathcal U_{\Lambda_{\lambda}}\) is the
\((D_A\otimes D_B)\)-\((D_X\otimes D_Y)\)-bimodule associated with the
relation \(\Lambda_{\lambda}\).

\begin{proposition}\label{prop:classical-games-join-maps}
Let \(X,Y,A,B\) be finite sets. Classical rule functions \( \lambda\) are in one-to-one correspondence with
zero-preserving join-continuous maps
\( 
\varphi:\mathcal P(D_X\otimes D_Y)\to\mathcal P(D_A\otimes D_B).
\) 
Under this correspondence,
\( 
\mathcal{U}_\varphi=\mathcal U_{\Lambda_{\lambda}},
\)
the diagonal bimodule associated with the winning relation of the game.
\end{proposition}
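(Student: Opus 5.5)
The plan is to build the correspondence explicitly through atoms and then check the winning spaces entrywise. Given $\lambda$, define $\varphi_\lambda$ on $\mathcal P(D_X\otimes D_Y)\cong\mathcal P(X\times Y)$ by
\[
\varphi_\lambda(P_E)=\sum\{P_{a,b}: \lambda(a,b,x,y)=1 \text{ for some }(x,y)\in E\},
\]
that is, $\varphi_\lambda(P_E)=P_{\Lambda_\lambda(E)}$, the projection onto the image of $E$ under the relation $\Lambda_\lambda$. Under the Boolean identification of the lattices, joins are unions, and $\Lambda_\lambda(\bigcup_i E_i)=\bigcup_i\Lambda_\lambda(E_i)$ and $\Lambda_\lambda(\emptyset)=\emptyset$. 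So $\varphi_\lambda$ is zero-preserving and join-continuous. Conversely, given such a $\varphi$, define $\lambda_\varphi(a,b,x,y)=1$ iff $P_{a,b}\le\varphi(P_{x,y})$.

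For bijectivity, I will check both composites. First, $\lambda_{\varphi_\lambda}=\lambda$ is immediate, since $\varphi_\lambda(P_{x,y})=\sum_{\lambda(a,b,x,y)=1}P_{a,b}$. Second, $\varphi_{\lambda_\varphi}=\varphi$: both maps are join-continuous and zero-preserving, and every $P_E$ is the finite join of the atoms $P_{x,y}$, $(x,y)\in E$, or is $0$ when $E=\emptyset$. It therefore suffices that they agree on atoms. This holds because $\varphi(P_{x,y})\in\mathcal P(D_A\otimes D_B)$ is itself a sum of the atoms $P_{a,b}$ it dominates. This is the only point needing care: it uses that the output lattice is atomic Boolean, so a projection is determined by the atoms below it.

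For the winning spaces, note first that $\mathcal U_\varphi$ is defined with $P$ ranging only over $\mathcal P(D_X\otimes D_Y)$, as in the von Neumann algebraic version of the definition. I will show $\mathcal U_\varphi=\{T:\varphi(P_{x,y})^\perp TP_{x,y}=0\ \forall (x,y)\}$. The inclusion $\subseteq$ is trivial. For $\supseteq$, take $P=P_E$ and write $TP_E=\sum_{(x,y)\in E}TP_{x,y}$. Each term satisfies $TP_{x,y}=\varphi(P_{x,y})TP_{x,y}$, and $\varphi(P_{x,y})\le\varphi(P_E)$ by monotonicity, which follows from join-continuity. Hence $\varphi(P_E)^\perp TP_E=0$.

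Finally, $\varphi(P_{x,y})^\perp TP_{x,y}=0$ says $T(e_x\otimes e_y)\in\operatorname{span}\{e_a\otimes e_b:\lambda_\varphi(a,b,x,y)=1\}$. Equivalently, the matrix entry $\langle e_a\otimes e_b,T(e_x\otimes e_y)\rangle$ vanishes whenever $((x,y),(a,b))\notin\Lambda_\lambda$. This is exactly membership in $\operatorname{span}\{\varepsilon_{(a,b),(x,y)}:((x,y),(a,b))\in\Lambda_\lambda\}$, the same computation as in Subsection~\ref{subsec:class}. This gives $\mathcal U_\varphi=\mathcal U_{\Lambda_\lambda}$.
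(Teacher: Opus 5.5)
Your proposal is correct and follows essentially the same route as the paper: the same atom-based definitions of $\varphi_\lambda$ and $\lambda_\varphi$, the same use of atomicity of both lattices to show that the two constructions are mutually inverse, and the same entrywise identification of $\mathcal U_\varphi$ with $\mathcal U_{\Lambda_\lambda}$. Your explicit reduction of the defining condition of $\mathcal U_\varphi$ to atoms, via monotonicity of $\varphi$, just spells out the converse step that the paper states directly.
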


\begin{proof}
Let
\(\varphi:\mathcal P(D_X\otimes D_Y)\to\mathcal P(D_A\otimes D_B)\)
be zero-preserving and join-continuous. Since
\(\mathcal P(D_X\otimes D_Y)\) is atomic, every \(P_E\) satisfies
\( 
P_E=\bigvee_{(x,y)\in E}P_{x,y}.
\)
Hence
\( 
\varphi(P_E)=\bigvee_{(x,y)\in E}\varphi(P_{x,y}).
\)
Thus \(\varphi\) is completely determined by its values on the atoms. Define
a rule predicate \(\lambda_\varphi:A\times B\times X\times Y\to\{0,1\}\) by
\[
\lambda_\varphi(a,b,x,y)=1
\quad\Longleftrightarrow\quad
P_{a,b}\leq \varphi(P_{x,y}).
\]
In words, the winning answers for the question \((x,y)\) are precisely the
answer atoms contained in the projection \(\varphi(P_{x,y})\).

Conversely, let \(G=(X,Y,A,B,\lambda)\) be a classical game. Define
\[
\varphi_\lambda(P_E)
=
\bigvee\{P_{a,b}:\exists (x,y)\in E
\text{ such that } \lambda(a,b,x,y)=1\}.
\]
Equivalently, under the power-set identification,
\[
\varphi_\lambda(E)
=
\{(a,b)\in A\times B:
\exists (x,y)\in E \text{ such that } \lambda(a,b,x,y)=1\}.
\]
It is immediate that \(\varphi_\lambda(0)=0\). Moreover, if \((E_i)_i\) is
any family of subsets of \(X\times Y\), then an atom \(P_{a,b}\) is below
\(\varphi_\lambda(P_{\cup_iE_i})\) precisely when there exist \(i\) and
\((x,y)\in E_i\) such that \(\lambda(a,b,x,y)=1\). This is equivalent to
\(P_{a,b}\leq\vee_i\varphi_\lambda(P_{E_i})\). Hence
\(\varphi_\lambda\) preserves arbitrary joins.

The two constructions are inverse. Starting from \(\varphi\), we have
\[
\varphi_{\lambda_\varphi}(P_E)
=
\bigvee_{(x,y)\in E}
\bigvee\{P_{a,b}:P_{a,b}\leq \varphi(P_{x,y})\}
=
\bigvee_{(x,y)\in E}\varphi(P_{x,y})
=
\varphi(P_E).
\]
Starting from \(\lambda\), the condition
\(\lambda_{\varphi_\lambda}(a,b|x,y)=1\) is exactly the condition
\(P_{a,b}\leq \varphi_\lambda(P_{x,y})\), which is equivalent to
\(\lambda(a,b,x,y)=1\).

It remains to identify the associated operator space. For
\(T\in\mathcal B(\mathbb C^{X\times Y},\mathbb C^{A\times B})\), the
condition \(T\in\mathcal{U}_{\varphi_\lambda}\) means
\[
\varphi_\lambda(P_E)^\perp T P_E=0,
\qquad
E\subseteq X\times Y.
\]
Applying this to the atom \(P_{x,y}\), we see that
\(T(e_x\otimes e_y)\) must lie in the range of
\(\varphi_\lambda(P_{x,y})\). Equivalently,
\[
\langle T(e_x\otimes e_y),e_a\otimes e_b\rangle=0
\qquad
\text{whenever } \lambda(a,b,x,y)=0.
\]
Thus the matrix coefficients of \(T\) are supported on the operator-space associated with the winning relation. Conversely,
if all forbidden matrix coefficients vanish, then for every
\(E\subseteq X\times Y\), the subspace
\(T(P_E(\mathbb C^{X\times Y}))\) is contained in
\(\varphi_\lambda(P_E)(\mathbb C^{A\times B})\), and hence
\(\varphi_\lambda(P_E)^\perp TP_E=0\). Therefore
\[
\mathcal{U}_{\varphi_\lambda}
=
\operatorname{span}
\{\varepsilon_{(a,b),(x,y)}:\lambda(a,b,x,y)=1\}
=
\mathcal U_{\Lambda_{\lambda}}.
\]
This is exactly the diagonal bimodule determined by the winning relation.
\end{proof}


\subsection{The Todorov--Turowska quantum non-local games}

Let \(X,Y,A,B\) be finite sets and put
\( 
H_{\rm in}=\mathbb C^X\otimes\mathbb C^Y\), and
\( H_{\rm out}=\mathbb C^A\otimes\mathbb C^B.
\)
In the Todorov--Turowska setting \cite{TT24}, a quantum
non-local game is encoded by a zero-preserving join-continuous map
\[
\varphi:\mathcal P(M_X\otimes M_Y)\to
\mathcal P(M_A\otimes M_B).
\]

Operationally, \(\varphi\) should be read as a support-valued rule. If the
input is supported in a subspace \(PH_{\rm in}\), where
\( 
P\in\mathcal P(M_X\otimes M_Y),
\)
then the rule declares that the output of a perfectly winning transformation
must be supported in
\( 
\varphi(P)H_{\rm out}.
\)
Thus, a transformation \(T:H_{\rm in}\to H_{\rm out}\) obeys the rule if
\[
T(PH_{\rm in})\subseteq \varphi(P)H_{\rm out},
\qquad
P\in\mathcal P(M_X\otimes M_Y).
\]
Equivalently,
\[
\varphi(P)^\perp TP=0,
\qquad
P\in\mathcal P(M_X\otimes M_Y).
\]
The winning transformation space associated with \(\varphi\) is therefore
\[
\mathcal U_\varphi
=
\{T\in \B(H_{\rm in},H_{\rm out}):
\varphi(P)^\perp TP=0
\text{ for all }P\in\mathcal P(M_X\otimes M_Y)\}.
\]
In words, \(\mathcal U_\varphi\) consists of those transformations that send
every quantum input support to the output support prescribed by the rule.

The connection with the projection-test quantum-game picture is as follows.  In a
projection-test quantum game, one starts from an input state \(\psi\) and an
accepting projection \(Q\), and obtains the winning transformation space
\(\mathcal U_{\psi,Q}\) from the condition
\(Q^\perp(T\otimes 1_{H_R})\psi=0\).
In the Todorov--Turowska formulation, the starting point is instead the
projection-lattice rule \(\varphi\).  This rule first determines the support
space
\( 
\mathcal U_\varphi
\).
Vectorising these support constraints gives a projection-test quantum-game
realisation.  Namely, take \(H_R=\overline{H_{\rm in}}\), let
\(\Omega=(\dim H_{\rm in})^{-1/2}\sum_i e_i\otimes\overline e_i\), where $ (e_i)$ is an orthonormal basis in $ H_{\rm in}$, and set
\[
Q_\varphi
=
P_{\operatorname{vec}(\mathcal U_\varphi)}
=
\bigwedge_{P\in\mathcal P(M_X\otimes M_Y)}
\bigl(\varphi(P)^\perp\otimes P^{\op}\bigr)^\perp .
\]
Since \((T\otimes 1)\Omega=(\dim H_{\rm in})^{-1/2}\operatorname{vec}(T)\), we
have
\( 
Q_\varphi^\perp(T\otimes 1)\Omega=0\) 
if and only if 
\( T\in\mathcal U_\varphi.
\)
Thus, every Todorov--Turowska rule gives a projection-test quantum game, but with
a special accepting projection: \(Q_\varphi\) is the vectorised projection
onto a transformation space defined by support constraints.

Conversely, the support rule recovered from \(Q_\varphi\) is the Erdos support
map of \(\mathcal U_\varphi\).  Namely,
\(\varphi_{\mathcal U_\varphi}(P)=P_{[\mathcal U_\varphi P H_{\rm in}]}\).
Therefore \(Q_\varphi\) recovers the  rule
\(\varphi_{\mathcal U_\varphi}\), and the original rule \(\varphi\) is
recovered exactly when \(\varphi=\varphi_{\mathcal U_\varphi}\).

Thus, the projection-lattice formalism is the support-level description of
the reflexive part of the operator space picture: for every operator space
\(\mathcal U\), one has
\(\mathcal U_{\varphi_{\mathcal U}}=\operatorname{Ref}\mathcal U\).

\subsection{Operator-algebraic generalisation}
Before we move on to examples, we find it useful to pass to the more general von Neumann algebraic setting. In this way our definitions will capture all special cases of classical-to-classical, classical-to-quantum, quantum-to-classical and quantum-to-quantum situations as well as allow us to compare the different formalisms effectively.  

Let
\(\mathcal X,\mathcal Y,\mathcal A,\mathcal B\) be finite-dimensional von
Neumann algebras acting on Hilbert spaces
\(H_X\), \(H_Y\), \(H_A\), \(H_B\). A \emph{projection-lattice quantum game} from \( (\mathcal X,\mathcal Y)\) to \( (\mathcal A,\mathcal B)\) is a zero-preserving
join-continuous map
\[
\varphi:\mathcal P(\mathcal X\otimes \mathcal Y)\to\mathcal P(\mathcal A\otimes \mathcal B).
\]
The associated operator space is
\[
\mathcal U_\varphi
=
\{T\in \B(H_X\otimes H_Y,H_A \otimes H_B):\varphi(P)^\perp TP=0
\text{ for all }P\in\mathcal P(\mathcal X\otimes \mathcal Y)\}.
\]
We show that reflexive bimodules are in  correspondence with projection-lattice quantum games.
\begin{proposition} \label{prop:bimo_lattice}
Let \(\mathcal M\subseteq \B(H)\) and \(\mathcal N\subseteq \B(K)\) be von Neumann algebras. Given any zero-preserving join-continuous map
\( 
\varphi:\mathcal P(\mathcal M)\to\mathcal P(\mathcal N),
\)
the operator space $ \mathcal U_{\varphi}\subseteq \B(H,K)$ is a reflexive \(\mathcal N'\)-\(\mathcal M'\) bimodule. Conversely, 
if $ \mathcal U \subseteq \B(H,K)$ is a reflexive \(\mathcal N'\)-\(\mathcal M'\) bimodule, then the map $ \varphi_{\mathcal U}: \mathcal P(\mathcal M) \to \mathcal P(\mathcal N)$ is zero-preserving and join-continuous and $ \mathcal U=\mathcal U_{\varphi_{\mathcal U}}$.

\end{proposition}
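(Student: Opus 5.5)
The proof has two halves, and I would treat them separately, using only the definitions of $\mathcal U_\varphi$, $\varphi_{\mathcal U}$ and $\operatorname{Ref}$.

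\emph{First half.} Let $\varphi:\mathcal P(\mathcal M)\to\mathcal P(\mathcal N)$ be zero-preserving and join-continuous.
\begin{itemize}
\item \emph{Bimodule property.} Since $\varphi(P)\in\mathcal N$ and $P\in\mathcal M$, for $b\in\mathcal N'$, $a\in\mathcal M'$ and $T\in\mathcal U_\varphi$ we get
\[
\varphi(P)^\perp (bTa)P=b\,\varphi(P)^\perp T P\,a=0 .
\]
This is the same commutation argument as in the first half of Theorem~\ref{thm:abstract-projection-test}.
\item \emph{Reflexivity.} It suffices to show $\operatorname{Ref}\mathcal U_\varphi\subseteq\mathcal U_\varphi$. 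Each constraint $\varphi(P)^\perp TP=0$ is equivalent to the family of vector conditions $\varphi(P)^\perp T\xi=0$ for $\xi\in PH$.
\item \emph{Key point.} Take $S\in\operatorname{Ref}\mathcal U_\varphi$ and $\xi\in PH$. Then $S\xi\in[\mathcal U_\varphi\xi]$. Every $T\in\mathcal U_\varphi$ satisfies $T\xi\in\varphi(P)K$, and $\varphi(P)K$ is closed, so $[\mathcal U_\varphi\xi]\subseteq\varphi(P)K$. Hence $\varphi(P)^\perp S\xi=0$, and therefore $S\in\mathcal U_\varphi$.
\end{itemize}
Note that this half uses neither the lattice properties of $\varphi$ nor finite dimensionality.

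\emph{Second half.} Let $\mathcal U$ be a reflexive $\mathcal N'$-$\mathcal M'$ bimodule.
\begin{itemize}
\item \emph{$\varphi_{\mathcal U}$ is well defined.} For $P\in\mathcal P(\mathcal M)$, the subspace $[\mathcal U PH]$ is invariant under $\mathcal N'$ because $\mathcal N'\mathcal U\subseteq\mathcal U$. Since $\mathcal N'$ is self-adjoint, this subspace is reducing, so its projection lies in $\mathcal N''=\mathcal N$.
\item \emph{Lattice properties.} Zero-preservation is trivial. Join-continuity follows from $[\mathcal U(\bigvee_i P_i)H]=\bigvee_i[\mathcal U P_iH]$. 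The inclusion ``$\supseteq$'' is clear. For ``$\subseteq$'', $(\bigvee_i P_i)H$ is the closed span of the subspaces $P_iH$, and each $T\in\mathcal U$ is bounded, so $T$ maps this closed span into the closed span of the $TP_iH$.
\item \emph{Recovering $\mathcal U$.} I would show $\mathcal U_{\varphi_{\mathcal U}}=\operatorname{Ref}\mathcal U$, after which reflexivity gives $\mathcal U=\mathcal U_{\varphi_{\mathcal U}}$.
\end{itemize}

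\emph{Main obstacle.} The difficult inclusion is $\operatorname{Ref}\mathcal U\subseteq\mathcal U_{\varphi_{\mathcal U}}$. The classical Erdos fact ranges over \emph{all} projections of $\B(H)$, whereas here only $P\in\mathcal P(\mathcal M)$ are allowed.
\begin{itemize}
\item \emph{Easy inclusion.} If $S\in\operatorname{Ref}\mathcal U$ and $\xi\in PH$, then $S\xi\in[\mathcal U\xi]\subseteq[\mathcal U PH]$. Hence $\varphi_{\mathcal U}(P)^\perp SP=0$, so $\operatorname{Ref}\mathcal U\subseteq\mathcal U_{\varphi_{\mathcal U}}$.
\item \emph{Hard inclusion.} Let $T\in\mathcal U_{\varphi_{\mathcal U}}$ and $\xi\in H$. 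Take $P$ to be the projection onto $[\mathcal M'\xi]$; this projection lies in $\mathcal M''=\mathcal M$. Since $\mathcal U$ is a right $\mathcal M'$-module, $[\mathcal U\mathcal M'\xi]=[\mathcal U\xi]$. Then
\[
T\xi\in\varphi_{\mathcal U}(P)K=[\mathcal U PH]=[\mathcal U\xi],
\]
so $T\in\operatorname{Ref}\mathcal U$.
\end{itemize}
The step to check carefully is that $[\mathcal U PH]=[\mathcal U\mathcal M'\xi]$ when $PH=[\mathcal M'\xi]$. This again uses boundedness of each $T\in\mathcal U$ together with closed spans.
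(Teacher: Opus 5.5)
Your proposal is correct and follows the paper's proof essentially step by step. It uses the same commutation argument for the bimodule property, the same inclusion $[\mathcal U_\varphi\xi]\subseteq\varphi(P)K$ for reflexivity, and the same key choice $P=P_{[\mathcal M'\xi]}\in\mathcal P(\mathcal M)$ with $[\mathcal U\mathcal M'\xi]=[\mathcal U\xi]$ for recovering $\mathcal U$. The only difference is immaterial: you prove the full equality $\mathcal U_{\varphi_{\mathcal U}}=\operatorname{Ref}\mathcal U$ before invoking reflexivity, whereas the paper checks $\mathcal U\subseteq\mathcal U_{\varphi_{\mathcal U}}$ directly.

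One small slip: your ``Main obstacle'' sentence names $\operatorname{Ref}\mathcal U\subseteq\mathcal U_{\varphi_{\mathcal U}}$ as the difficult inclusion. The inclusion where restricting to $\mathcal P(\mathcal M)$ actually matters is the reverse, $\mathcal U_{\varphi_{\mathcal U}}\subseteq\operatorname{Ref}\mathcal U$, which is exactly what your $P_{[\mathcal M'\xi]}$ argument handles.
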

\begin{proof}
Let \(\varphi:\mathcal P(\mathcal M)\to\mathcal P(\mathcal N)\) be zero-preserving and join-continuous, and let $  \mathcal{U}_\varphi$ be the corresponding operator space.
If \(n'\in\mathcal N'\), \(m'\in\mathcal M'\), \(T\in\mathcal U_\varphi\),
and \(P\in\mathcal P(\mathcal M)\), then
\[
n'Tm'PH
=
n'TPm'H
\subseteq n'TPH
\subseteq n'\varphi(P)K
\subseteq \varphi(P)K.
\]
Thus, \(\mathcal{U}_\varphi\) is an \(\mathcal N'\)-\(\mathcal M'\) bimodule. Next, we show that \(\mathcal{U}_\varphi\) is reflexive. Let \(T\in\operatorname{Ref}\mathcal{U}_\varphi\). If \(P\in\mathcal P(\mathcal M)\) and \(\xi\in PH\), then
\[
T\xi\in[\mathcal{U}_\varphi\xi]\subseteq[\mathcal{U}_\varphi PH]\subseteq \varphi(P)K.
\]
Hence \(TPH\subseteq\varphi(P)K\) for every \(P\), so \(T\in\mathcal U_{\varphi}\). Therefore
\(\operatorname{Ref}\mathcal{U}_\varphi=\mathcal{U}_\varphi\).

Conversely, let \(\mathcal U\subseteq \B(H,K)\) be a reflexive
\(\mathcal N'\)-\(\mathcal M'\) bimodule and consider $ \varphi_{\mathcal U}$.
Since \(\mathcal U\) is a left \(\mathcal N'\)-module, the subspace
\([\mathcal UPH]\) is invariant under \(\mathcal N'\), hence reducing for
\(\mathcal N'\). Therefore \(\varphi_{\mathcal U}(P)\in\mathcal P((\mathcal N')')=\mathcal P(\mathcal N)\).
Clearly \(\varphi_{\mathcal U}(0)=0\). If \(P=\bigvee_i P_i\), then
\[
[\mathcal UPH]
=
\left[\sum_i\mathcal UP_iH\right],
\]
so \(\varphi_{\mathcal U}\) preserves joins.

Finally, \(\mathcal U\subseteq\mathcal{U}_{\varphi_{\mathcal U}}\) is immediate. For the reverse inclusion, let
\(T\in\mathcal{U}_{\varphi_{\mathcal U}}\) and \(\xi\in H\). Let \(P_{[\mathcal M'\xi]}\) be the projection onto \([\mathcal M'\xi]\). Then
\(P_{[\mathcal M'\xi]}\in\mathcal P(\mathcal M)\), and
\[
T\xi\in T P_{[\mathcal M'\xi]}H
\subseteq [\mathcal U P_{[\mathcal M'\xi]}H]
=
[\mathcal U\mathcal M'\xi]
=
[\mathcal U\xi].
\]
Thus \(T\in\operatorname{Ref}\mathcal U=\mathcal U\). Hence
\( 
\mathcal{U}_{\varphi_{\mathcal U}}=\mathcal U.
\)
This proves the claimed correspondence.
\end{proof}

\subsection{Strategies and winning transformations}

The  interpretation of the strategies for projection-test quantum games of Subsection \ref{subs:strategies_coop}  also applies to projection-lattice games.

If
\(\varphi:\mathcal P(M_X\otimes M_Y)\to\mathcal P(M_A\otimes M_B)\) is a
Todorov--Turowska rule, then a physical strategy is a quantum channel \(\Gamma : M_X \otimes M_Y \to M_A \otimes M_B\), belonging to one of the no-signalling
correlation resource classes defined in \cite{TT24}.

The rule declares \(\Gamma\) perfect when
\(\langle\Gamma(P),\varphi(P)^\perp\rangle=0\) for every projection
\(P\in\mathcal P(M_X\otimes M_Y)\). For a Kraus decomposition
\(\Gamma(\rho)=\sum_iT_i\rho T_i^*\), this is equivalent to
\(\varphi(P)^\perp T_iP=0\) for all \(P\) and \(i\). Hence
\( 
\Gamma\) is perfect for \( \varphi\) 
 if and only if \( 
\mathcal X_\Gamma\subseteq\mathcal U_\varphi .
\)
The projection-lattice rule therefore determines the Kraus-level winning
space, while the chosen resource class determines which channels are allowed
as physical strategies. Since by Proposition \ref{prop:operator-space-projection-test}, there exist $ \psi, Q$ such that  $ \mathcal U_{\varphi}= \mathcal U_{\psi,Q}$, the winning conditions coincide for the two models.

Similarly, for a generalised projection-lattice rule
\( 
\varphi:\mathcal P(\mathcal X\otimes\mathcal Y)
\to
\mathcal P(\mathcal A\otimes\mathcal B),
\)
a strategy \(\Gamma : (\mathcal X \otimes \mathcal Y)_* \to (\mathcal A \otimes \mathcal B)_{*}\) is perfect if every input state supported in \(P\) is
sent to an output state supported in \(\varphi(P)\), for every
\(P\in\mathcal P(\mathcal X\otimes\mathcal Y)\). Equivalently, for every
normal state \(\rho\) with \(s(\rho)\leq P\), we have 
\( 
\langle\Gamma(\rho),\varphi(P)^\perp\rangle=0.
\) 
For a Kraus decomposition, this is equivalent to
\( 
\varphi(P)^\perp T_iP=0\), for all 
\( P\in\mathcal P(\mathcal X\otimes\mathcal Y)\) and all \(  i,
\)
and therefore
\( 
\Gamma\)  is perfect for \( \varphi\) 
if and only if
\(\mathcal X_\Gamma\subseteq\mathcal U_\varphi.
\)


\subsection{Hypergraph quantum games} \label{subsec:hypergr}
We now recall the quantum-game formalisms considered in \cite{CLTT23}.  The authors study projection quantum games, including rank-one and finite-rank quantum games, and hypergraph quantum games arising from probabilistic quantum hypergraphs.  From the perspective of the present paper, these are all instances of the same projection-test formalism once the referee Hilbert space is allowed to be arbitrary, and possibly infinite-dimensional.

Recall from Subsection~\ref{sec:class_latt} that a classical non-local game
$G=(X,Y,A,B,\lambda)$ determines a winning relation
\( 
\Lambda_{\lambda}
\subseteq
(X\times Y) \times (A\times B),
\)
equivalently a zero-preserving join-continuous map
\[
\phi_\lambda:
\mathcal P(D_X\otimes D_Y)
\to
\mathcal P(D_A\otimes D_B).
\]
For each question atom
$P_{x,y}=\varepsilon_{x,x}\otimes \varepsilon_{y,y}$, the projection
$\phi_\lambda(P_{x,y})$ is the diagonal projection onto the span of the
accepted answer atoms:
\[
\phi_\lambda(P_{x,y})
=
\sum_{\lambda(a,b,x,y)=1}P_{a,b}.
\]
If the classical game is equipped with a question distribution $\pi$ on
$X\times Y$, then $\pi$ may be regarded as the atomic probability measure
\[
\mu_\pi
=
\sum_{(x,y)\in X\times Y}
\pi(x,y)\,\delta_{P_{x,y}}
\]
on the pure state space of $M_X\otimes M_Y$, identified with the rank-one
projections on $\mathbb C^X\otimes\mathbb C^Y$.

A classical non-local game together with a question distribution $(G,\pi)$
can therefore be viewed as a \emph{probabilistic quantum hypergraph} $( \phi_{\lambda},\mu_\pi)$ supported on
the classical question atoms. 

\begin{remark}\rm
The terminology ``hypergraph'' comes from the classical case as follows.  A
rule predicate \(\lambda:A\times B\times X\times Y\to\{0,1\}\) determines a
hypergraph
\( 
\mathbb H_\lambda=(V,E)\), where 
\( V=A\times B,
\)
whose hyperedges are the winning answer sets
\[
E_{x,y}
=
\{(a,b)\in A\times B:\lambda(a,b,x,y)=1\},
\qquad (x,y)\in X\times Y.
\]
Strictly speaking, this is an \(X\times Y\)-indexed hypergraph on the vertex set $A \times B$.  The same labelled data can equivalently be written as the binary relation
from $X\times Y$ to $A\times B$,
\[
\Lambda_\lambda
=
\{((x,y),(a,b))\in (X\times Y)\times(A\times B):
\lambda(a,b,x,y)=1\}.
\]
In this paper, we mainly use this relational form, because it passes directly
to the associated operator space of winning transformations.
\end{remark}

The quantum definition replaces  diagonal classical pure inputs by
arbitrary pure input states.  Let
\[
\mathbb P_{XY}
=
\{\xi\xi^*:\xi\in \mathbb C^X\otimes\mathbb C^Y,\ \|\xi\|=1\}
\]
be the pure state space of $M_X\otimes M_Y$, identified with the rank-one
projections on $\mathbb C^X\otimes\mathbb C^Y$ and equipped with the
operator-norm topology.  A \emph{hypergraph quantum game} over
$(X,Y,A,B)$ is a  pair
\( 
(\phi,\mu),
\)
where $\mu$ is a regular Borel probability measure on $\mathbb P_{XY}$ and
\[
\phi:\mathbb P_{XY}\to \mathcal P(M_A\otimes M_B)
\]
is Borel measurable.  Thus, the referee samples a pure input state
$p\in\mathbb P_{XY}$ according to $\mu$, the players apply a transformation $T: \bb C^{X} \otimes \bb C^Y \to \bb C^{A} \otimes \bb C^B$ and the projection $\phi(p)$
specifies the accepted output support.

\begin{remark} \rm
It is important to distinguish this rule from the projection-lattice maps of
the previous subsections.  There, the rule is encoded as a zero-preserving
join-continuous map on the relevant projection lattice.  In the probabilistic
quantum hypergraph formalism, the rule is instead a Borel map defined only
on the pure state space $\mathbb P_{XY}$.  It is not required to preserve
joins, and the emphasis is on averaging the payoff with respect to $\mu$. We will see the precise relation between the two approaches in Proposition~\ref{prop:hypergraph-reflexive}.
\end{remark}

We associate with a hypergraph quantum game $(\phi,\mu)$ an
almost-everywhere version of the winning transformation space.  Let
\( 
H_{\rm in}=\mathbb C^X\otimes\mathbb C^Y\) and \( 
H_{\rm out}=\mathbb C^A\otimes\mathbb C^B.
\)
For $T\in\B(H_{\rm in},H_{\rm out})$ and $p=\xi\xi^*\in\mathbb P_{XY}$,
the pointwise winning condition is
\( 
\phi(p)^\perp T\xi=0.
\)
This condition is independent of the choice of unit vector $\xi$ representing
$p$.  Equivalently,  the winning transformation space associated with $(\phi,\mu)$ is
\[
\mathcal U_{\phi,\mu}
=
\left\{
T\in\B(H_{\rm in},H_{\rm out}):
\int_{\mathbb P_{XY}}
\operatorname{Tr}\bigl(TpT^*\phi(p)^\perp\bigr)\,d\mu(p)=0
\right\},
\]
since \( \|\phi(p)^\perp T\xi\|^2
=
\operatorname{Tr}\bigl(TpT^*\phi(p)^\perp\bigr)\).

Let $\Gamma:M_{X} \otimes M_Y\to M_{A} \otimes M_B$ be a quantum channel.
The \emph{winning probability} of the strategy $\Gamma$ for the hypergraph quantum game $(\phi,\mu)$  is
\[
\omega_{\phi,\mu}(\Gamma)
=
\int_{\mathbb P_{XY}}
\operatorname{Tr}\bigl(\phi(p)\Gamma(p)\bigr)\,d\mu(p).
\]

Projection-test quantum games with $ H_R= \ell^2(I)$ were called projection quantum games in  \cite{CLTT23}.
The connection between hypergraph quantum games and projection-test quantum games in the finitely supported case was observed in
\cite[Remark~4.13]{CLTT23}.

\begin{proposition}\label{prop:hypergraph-projection-test}
Let \((\phi,\mu)\) be a hypergraph quantum game.  Then there is a
projection-test quantum game \(G=(\psi,Q)\), with referee space
\(L^2(\mathbb P_{XY},\mu)\), such that
\( 
\mathcal U_{\psi,Q}=\mathcal U_{\phi,\mu}.
\)
Moreover, for every quantum channel \(\Gamma\), 
\[
\omega_{\phi,\mu}(\Gamma) =\omega_{\psi,Q}(\Gamma).
\]
\end{proposition}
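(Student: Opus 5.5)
The plan is to mimic the classical construction of Subsection~\ref{subsec:class}, replacing the finite question register by $H_R=L^2(\mathbb P_{XY},\mu)$. The referee should hold a coherent copy of the sampled pure state, but a vector $\xi$ representing $p$ is only determined up to a phase. So I would first fix a Borel section $s:\mathbb P_{XY}\to H_{\rm in}$ with $s(p)s(p)^*=p$ and $\|s(p)\|=1$. Such a section exists because $\mathbb P_{XY}$ is a compact metrisable space and the unit sphere maps onto it continuously with compact fibres; one can then apply a measurable selection theorem. Alternatively, one can build it explicitly: on the Borel set where $\langle e_k,\xi\rangle\neq 0$ for the first such basis index $k$, normalise the phase of that coordinate to be positive.

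Identify $H_{\rm in}\otimes L^2(\mathbb P_{XY},\mu)$ with $L^2(\mathbb P_{XY},\mu;H_{\rm in})$ (valid since $H_{\rm in}$ is finite dimensional), and similarly on the output side. Define
\[
\psi(p)=s(p),\qquad (Q f)(p)=\phi(p)f(p),\quad f\in L^2(\mathbb P_{XY},\mu;H_{\rm out}).
\]
Then $\|\psi\|^2=\int\|s(p)\|^2\,d\mu=1$. Since $\phi$ is Borel and projection-valued, $Q$ is a bounded decomposable operator with $Q=Q^*=Q^2$. Indeed, $Q$ lies in $\B(H_{\rm out})\otimes L^\infty(\mu)$, so it is a projection of the required kind. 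Moreover $T\otimes 1$ acts pointwise: $((T\otimes1)f)(p)=Tf(p)$.

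For the winning space, compute
\[
\|Q^\perp(T\otimes1)\psi\|^2=\int_{\mathbb P_{XY}}\|\phi(p)^\perp T s(p)\|^2\,d\mu(p)=\int_{\mathbb P_{XY}}\Tr\bigl(TpT^*\phi(p)^\perp\bigr)\,d\mu(p).
\]
This vanishes exactly when $T\in\mathcal U_{\phi,\mu}$, so $\mathcal U_{\psi,Q}=\mathcal U_{\phi,\mu}$.

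For values, take a Kraus decomposition $\Gamma(\rho)=\sum_iT_i\rho T_i^*$. Then
\[
\omega_{\psi,Q}(\Gamma)=\sum_i\langle (T_i\otimes1)\psi,Q(T_i\otimes1)\psi\rangle=\int\sum_i\langle T_is(p),\phi(p)T_is(p)\rangle\,d\mu(p)=\int\Tr\bigl(\phi(p)\Gamma(p)\bigr)\,d\mu(p),
\]
using $\sum_iT_ipT_i^*=\Gamma(p)$. Here the referee space is infinite dimensional, so one should interpret $(\Gamma\otimes\id)(\psi\psi^*)$ and the pairing with $Q$ through the Kraus form, as in Subsection~\ref{subs:strategies_coop}. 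The Kraus form is unambiguous because $\Gamma$ acts only on the finite-dimensional factor.

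The main obstacle is measure-theoretic bookkeeping rather than algebra. It has three parts: obtaining the Borel section $s$, checking that $p\mapsto\phi(p)$ and $p\mapsto Ts(p)$ are measurable so that $Q$ and $\psi$ are well defined, and noting that the integrands are independent of the choice of $s$ because everything depends only on $p=s(p)s(p)^*$. I would handle the section by the explicit first-nonzero-coordinate phase normalisation, which is manifestly Borel. With that in place, the remaining identities are pointwise computations integrated against $\mu$.
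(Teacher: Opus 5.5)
Your proposal is correct and follows the paper's proof essentially step for step. The paper also works on $L^2(\mathbb P_{XY},\mu;\mathbb C^{XY})$ with $\psi(p)=\xi_p$, using the same first-nonzero-coordinate Borel section $\xi_p=pe_{j(p)}/\|pe_{j(p)}\|$, which is exactly your phase normalisation. It takes $Q=M_\phi$ and computes the value fibrewise through a Kraus decomposition. The differences are only cosmetic: the paper first treats discrete $\mu$ with $H_R=\ell^2(I)$ as a warm-up and states the winning condition as $\phi(p)^\perp T\xi_p=0$ $\mu$-a.e., whereas you match the integral definition of $\mathcal U_{\phi,\mu}$ directly through $\|Q^\perp(T\otimes1)\psi\|^2$.
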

\begin{proof}
 If \(\mu\) is discrete, say
\( 
\mu=\sum_{i\in I}\mu_i\,\delta_{\xi_i\xi_i^*}\),
\( 
\sum_{i\in I}\mu_i=1,
\)
where \(I\) is finite or countable, then this averaged family can be packaged
as a single projection-test quantum game.  Let \(H_R=\ell^2(I)\), with
orthonormal basis \(\{e_i:i\in I\}\), and set
\[
\psi=\sum_{i\in I}\sqrt{\mu_i}\,\xi_i\otimes e_i,
\qquad
Q=
\sum_{i\in I}
\phi(\xi_i\xi_i^*)\otimes e_ie_i^*,
\]
where the second sum is understood in the strong operator topology.
Then \(Q\) is a block-diagonal projection in
\( 
\mathcal P\big((M_A\otimes M_B)\otimes \B(H_R)\big),
\)
and the projection-test \((\psi,Q)\)  has the same winning transformation space. 

For a general regular Borel probability measure \(\mu\), the same construction
can be written using vector-valued \(L^2\)-spaces.  Let
\(H_R=L^2(\mathbb P_{XY},\mu)\).  We identify
\( 
\mathbb C^{XY}\otimes H_R
\cong
L^2(\mathbb P_{XY},\mu;\mathbb C^{XY})
\)
and similarly
\( 
\mathbb C^{AB}\otimes H_R
\cong
L^2(\mathbb P_{XY},\mu;\mathbb C^{AB}).
\)
Since \(\mathbb P_{XY}\) consists of rank-one projections, we choose a
measurable section
\( 
\mathbb P_{XY}\ni p\longmapsto \xi_p\in\mathbb C^{XY}.
\)
Thus \(\|\xi_p\|=1\) and \(p=\xi_p\xi_p^*\).

Let us briefly explain why such a section exists.  Fix an orthonormal basis
\(e_1,\dots,e_n\) of \(\mathbb C^{XY}\).  For \(p\in\mathbb P_{XY}\), let
\(j(p)\) be the first index such that \(pe_{j(p)}\neq0\), and define
\( 
\xi_p=\frac{pe_{j(p)}}{\|pe_{j(p)}\|}.
\)
This is well defined, since \(p\neq0\) is rank one.  The set on which
\(j(p)=j\) is
\( 
\{p:\|pe_i\|=0\text{ for }i<j,\ \|pe_j\|>0\},
\)
and is Borel, because the maps \(p\mapsto pe_i\) are continuous in the
operator norm.  On this set the map \(p\mapsto pe_j/\|pe_j\|\) is continuous.
Hence \(p\mapsto\xi_p\) is Borel.  Moreover, \(\xi_p\) is a unit vector in
\({\rm Ran}\,p\), and therefore \(p=\xi_p\xi_p^*\).

Define
\(\psi\in L^2(\mathbb P_{XY},\mu;\mathbb C^{XY})\) by
\( 
\psi(p)=\xi_p\),  for  \(p\in\mathbb P_{XY}.
\)
Since \(\|\xi_p\|=1\) and \(\mu\) is a probability measure, \(\psi\) is a unit
vector. Since \(M_{AB}\) is finite-dimensional and \(\phi\) is Borel with values
in projections, we may regard \(\phi\) as an element of
\(L^\infty(\mathbb P_{XY},\mu;M_{AB})\).  Let \(Q=M_\phi\) be the
corresponding multiplication operator on
\(L^2(\mathbb P_{XY},\mu;\mathbb C^{AB})\cong \mathbb C^{AB}\otimes H_R\);
that is,
\[
(Qf)(p)=\phi(p)f(p),
\qquad
f\in L^2(\mathbb P_{XY},\mu;\mathbb C^{AB}).
\]
Since \(\phi(p)^2=\phi(p)=\phi(p)^*\) for every \(p\), it follows
fibrewise that \(Q^2=Q=Q^*\).  Hence \(Q\) is a projection. 

Thus \((\psi,Q)\) is a projection-test quantum game with referee space
\(L^2(\mathbb P_{XY},\mu)\).  For \(T\in \B(\mathbb C^{XY},\mathbb C^{AB})\),
we have
\( 
\big(Q^\perp(T\otimes 1_{H_R})\psi\big)(p)
=
\phi(p)^\perp T\xi_p
\)
for \(\mu\)-almost every \(p\in\mathbb P_{XY}\).  Hence,
\[
\mathcal U_{\psi,Q}
=
\left\{
T\in \B(\mathbb C^{XY},\mathbb C^{AB}):
\phi(p)^\perp T\xi_p=0
\text{ for }\mu\text{-a.e. }p\in\mathbb P_{XY}
\right\}.
\]
This is precisely the almost-everywhere winning transformation space
associated with the hypergraph quantum game \((\phi,\mu)\). The space does not depend on the chosen measurable section.  

The values agree as well.    Let $\Gamma:M_{X} \otimes M_Y\to M_{A} \otimes M_B$ be a quantum channel and choose
a Kraus decomposition
\( 
\Gamma(\rho)=\sum_{i=1}^N T_i\rho T_i^*\), where 
\( 
T_i\in \B(\mathbb C^{XY},\mathbb C^{AB}).
\)
Since $\psi(p)=\xi_p$ and $Q=M_\phi$, we have
\( 
\bigl((T_i\otimes 1_{H_R})\psi\bigr)(p)=T_i\xi_p
\)
for $\mu$-almost every $p$.  Hence
\[
\begin{aligned}
\omega_{\psi,Q}(\Gamma)
&=
\operatorname{Tr}\bigl(Q(\Gamma\otimes {\rm id}_{S^1(H_R)})(\psi\psi^*)\bigr)
\\
&=\sum_{i=1}^N
\left\langle
Q(T_i\otimes 1_{H_R})\psi,\,
(T_i\otimes 1_{H_R})\psi
\right\rangle                                                   \\
&=
\sum_{i=1}^N
\int_{\mathbb P_{XY}}
\left\langle
\phi(p)T_i\xi_p,\,
T_i\xi_p
\right\rangle
\,d\mu(p)                                                        \\
&=
\int_{\mathbb P_{XY}}
\operatorname{Tr}
\left(
\phi(p)
\sum_{i=1}^N T_i p T_i^*
\right)
\,d\mu(p)                                                        \\
&=
\int_{\mathbb P_{XY}}
\operatorname{Tr}\bigl(\phi(p)\Gamma(p)\bigr)\,d\mu(p)
=
\omega_{\phi,\mu}(\Gamma).
\end{aligned}
\]
\end{proof}

One might alternatively use Proposition~\ref{prop:operator-space-projection-test}
to realise the hypergraph winning space as a projection-test game.  This
universal realisation, however, preserves only the winning
transformation space, and hence perfect play; it does not preserve the
hypergraph value in general.  By contrast, the
\(L^2(\mathbb P_{XY},\mu)\)-realisation of
Proposition~\ref{prop:hypergraph-projection-test} retains the measurable
family of tests and gives the value-preserving projection-test realisation.

We finish by recording the relation with projection-lattice games at
the level of perfect transformations.

\begin{proposition}\label{prop:hypergraph-reflexive}
Let \(H_{\rm in}=\bb C^{X}\otimes \bb C^Y\) and \(H_{\rm out}= \bb C^{A} \otimes \bb C^B\). The following hold: 
\begin{enumerate}
\item[(i)] If \((\phi,\mu)\) is a hypergraph quantum game, then
\(\mathcal U_{\phi,\mu}\) is reflexive.

\item[(ii)] Conversely, every reflexive operator space $\cl U\subseteq \B(H_{\rm in}, H_{\rm out})$ arises this way; there is a finitely supported hypergraph
quantum game \((\phi,\mu)\) such that
\[
\mathcal U_{\phi,\mu}
=
\mathcal U.
\]
\end{enumerate}
\end{proposition}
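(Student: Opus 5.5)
For (i), I would write \(\mathcal U_{\phi,\mu}\) as an intersection of reflexive spaces. By Proposition~\ref{prop:hypergraph-projection-test}, \(T\in\mathcal U_{\phi,\mu}\) if and only if \(\phi(p)^\perp T\xi_p=0\) for \(\mu\)-a.e.\ \(p\). Let \(S=\operatorname{supp}\mu\), the smallest closed set of full measure; it exists because \(\mathbb P_{XY}\) is a compact metric space and \(\mu\) is regular.

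The first task is to replace the a.e.\ condition by a pointwise condition over a fixed set of vectors. For a fixed \(T\), the set \(N_T=\{p:\phi(p)^\perp T\xi_p\neq 0\}\) is Borel, and \(T\in\mathcal U_{\phi,\mu}\) means \(\mu(N_T)=0\). Since \(\B(H_{\rm in},H_{\rm out})\) is finite-dimensional, \(\mathcal U_{\phi,\mu}\) is a linear subspace, and a basis \(T_1,\dots,T_m\) suffices for controlling exceptional sets. Let \(N=\bigcup_k N_{T_k}\), a \(\mu\)-null set, and put \(G=\mathbb P_{XY}\setminus N\). Every \(T\in\mathcal U_{\phi,\mu}\) is a linear combination of the \(T_k\), so \(\phi(p)^\perp T\xi_p=0\) for all \(p\in G\). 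Hence
\[
\mathcal U_{\phi,\mu}\subseteq \mathcal V:=\{T:\phi(p)^\perp T\xi_p=0\ \text{for all } p\in G\}.
\]
Conversely \(\mathcal V\subseteq\mathcal U_{\phi,\mu}\), since \(G\) has full measure. Thus \(\mathcal U_{\phi,\mu}=\mathcal V\).

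The space \(\mathcal V\) is an intersection of the spaces \(\{T: \phi(p)^\perp T\xi_p=0\}\). Each of these is reflexive, being of the form \(\{T: E^\perp T F=0\}\) with \(F=\xi_p\xi_p^*\) and \(E=\phi(p)\). Intersections of reflexive spaces are reflexive, and this can also be checked directly. Take \(T\in\operatorname{Ref}\mathcal V\) and \(p\in G\). Then \(T\xi_p\in[\mathcal V\xi_p]\subseteq \phi(p)H_{\rm out}\), so \(T\in\mathcal V\). This gives (i). The main obstacle is exactly the passage from the a.e.\ condition to the fixed exceptional set, and finite-dimensionality of the ambient space handles it.

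For (ii), let \(\mathcal U\) be reflexive. By the discussion preceding Proposition~\ref{prop:bimo_lattice}, \(\mathcal U=\mathcal U_{\varphi_{\mathcal U}}\). Equivalently,
\[
\mathcal U=\{T: T\xi\in[\mathcal U\xi]\ \text{for all } \xi\},
\]
so \(T\in\mathcal U\) if and only if \(P_{[\mathcal U\xi]}^\perp T\xi=0\) for all unit vectors \(\xi\).

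I would reduce this to finitely many vectors by a dimension argument. For each unit \(\xi\), the set \(\mathcal W_\xi=\{T: P_{[\mathcal U\xi]}^\perp T\xi=0\}\) is a subspace of the finite-dimensional space \(\B(H_{\rm in},H_{\rm out})\). An intersection of subspaces of a finite-dimensional space equals a finite subintersection, because a descending chain of subspaces stabilises. So there are unit vectors \(\xi_1,\dots,\xi_n\) with
\[
\mathcal U=\bigcap_{i=1}^n\mathcal W_{\xi_i}.
\]
The points \(p_i=\xi_i\xi_i^*\) may be taken distinct, since \(\mathcal W_\xi\) depends only on \(\xi\xi^*\).

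Now define \(\mu=\frac1n\sum_i\delta_{p_i}\). Set \(\phi(p_i)=P_{[\mathcal U\xi_i]}\) and \(\phi(p)=1\) elsewhere. This \(\phi\) is Borel, since it differs from a constant on a finite set. By the discrete description in Proposition~\ref{prop:hypergraph-projection-test}, \(\mathcal U_{\phi,\mu}=\bigcap_i\mathcal W_{\xi_i}=\mathcal U\). Note that \(\mathcal W_\xi\) is independent of the phase of the representing vector, which makes the choice of section immaterial.
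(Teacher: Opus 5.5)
Part (i) is essentially the paper's argument: fix a basis of \(\mathcal U_{\phi,\mu}\), intersect the corresponding full-measure sets, and test \(\operatorname{Ref}\mathcal U_{\phi,\mu}\) pointwise on the resulting good set. Your detour through \(\operatorname{supp}\mu\) is never used and can be dropped.

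For (ii) you take a genuinely different route, and it is correct. The paper invokes Larson's rank-one preannihilator criterion to write \(\mathcal U=\{T:\langle\eta_j,T\xi_j\rangle=0,\ 1\le j\le r\}\). It then groups the \(\xi_j\) by the lines they span and sets \(\phi(p_\ell)=P_{F_\ell^\perp}\), where \(F_\ell\) is spanned by the corresponding \(\eta_j\). You instead use only the definition \(\mathcal U=\operatorname{Ref}\mathcal U=\bigcap_\xi\mathcal W_\xi\), together with the fact that an arbitrary intersection of subspaces of a finite-dimensional space is already a finite subintersection.

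Your route is more elementary and self-contained. Since each \(\mathcal W_{\xi}\) is cut out by the rank-one functionals \(T\mapsto\langle\eta,T\xi\rangle\) with \(\eta\perp[\mathcal U\xi]\), it in effect proves the direction of Larson's criterion that is needed. It also produces the canonical rule \(\phi(p_i)=P_{[\mathcal U\xi_i]}=\varphi_{\mathcal U}(p_i)\), the restriction of the Erdos support map to finitely many atoms. This ties the hypergraph game directly to the projection-lattice game of \(\mathcal U\). Choosing the \(\xi_i\) greedily bounds the number of atoms by \(\operatorname{codim}\mathcal U\), the same bound the paper's basis of rank-one annihilators gives. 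The paper's route, in exchange, yields an explicit finite family of rank-one annihilating functionals, the same tool it uses in Example~\ref{ex:not_lattice} and Proposition~\ref{prop:one-sided-masa-modules-reflexive}.

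One small point: when \(\mathcal U=\B(H_{\rm in},H_{\rm out})\), your finite subfamily may be empty, so \(\mu=\frac1n\sum_i\delta_{p_i}\) is undefined. Append any unit vector, which does not change the intersection, or treat this case separately as the paper does.
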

\begin{proof}
(i) Let \((\phi,\mu)\) be a hypergraph quantum game, and let
\(\mathcal U_{\phi,\mu}\) be its winning transformation space.  Then
\(\mathcal U_{\phi,\mu}\) is reflexive. 
Indeed,  choose
a basis \(T_1,\dots,T_r\) of \(\mathcal U_{\phi,\mu}\).  For each \(j\), set
\[
E_j=
\{p\in\mathbb P_{XY}:\phi(p)^\perp T_j\xi_p=0\}.
\]
This set is Borel, because \(p\mapsto \xi_p\) is Borel measurable and \(\phi\) is Borel
with values in the finite-dimensional projection lattice.  Since
\(T_j\in\mathcal U_{\phi,\mu}\), the set \(E_j\) is of measure one.  Hence
\(E=\bigcap_{j=1}^rE_j\) is of measure one.  For \(p\in E\), every element of
\(\mathcal U_{\phi,\mu}\) is a linear combination of the \(T_j\)'s and
therefore
\( 
\mathcal U_{\phi,\mu}\xi_p\subseteq \phi(p)H_{\rm out}.
\)
Now let \(S\in\operatorname{Ref}\mathcal U_{\phi,\mu}\).  By definition,
\(S\xi\in[\mathcal U_{\phi,\mu}\xi]\) for every \(\xi\in H_{\rm in}\).  In
particular, for every \(p\in E\),
\[
S\xi_p\in[\mathcal U_{\phi,\mu}\xi_p]\subseteq \phi(p)H_{\rm out}.
\]
Thus \(\phi(p)^\perp S\xi_p=0\) for \(\mu\)-almost every \(p\), and so
\(S\in\mathcal U_{\phi,\mu}\).  Hence
\(\operatorname{Ref}\mathcal U_{\phi,\mu}\subseteq\mathcal U_{\phi,\mu}\).
The reverse inclusion is automatic, and therefore
\(\mathcal U_{\phi,\mu}\) is reflexive.

(ii) If $\cl U = \B(H_{\rm in}, H_{\rm out})$, choose any $ p_0 \in \bb P_{XY}$, set $\mu = \delta_{p_0}$, and let $ \phi(p)=1 $ for every $p$. Then, $\cl U_{\phi,\mu}= \B(H_{\rm in}, H_{\rm out})$.

Now assume that $\cl U \neq \B(H_{\rm in}, H_{\rm out})$. Since $\cl U$ is reflexive,  by Larson's 
rank-one preannihilator criterion \cite{Lar82} (see also
\cite[Corollary 9.3]{Erdos86}), there exist non-zero vectors \(\xi_j\in H_{\rm in}\) and
\(\eta_j\in H_{\rm out}\), \(1\leq j\leq r\), such that
\[
\mathcal U
=
\{T\in\B(H_{\rm in},H_{\rm out}):
\langle\eta_j,T\xi_j\rangle=0,\ 1\leq j\leq r\}.
\]
Let \(p_1,\dots,p_m\) be the distinct rank-one projections onto the lines
\(\mathbb C\xi_j\).  For each \(\ell\), set
\( 
F_\ell= \mathrm{span}\{ \eta_j:\xi_j\in p_\ell H_{\rm in}\}
\subseteq H_{\rm out}.
\)
Define
\[
\mu=\frac1m\sum_{\ell=1}^m\delta_{p_\ell},
\qquad
\phi(p_\ell)=P_{F_\ell^\perp},
\]
and define \(\phi\) arbitrarily off \(\{p_1,\dots,p_m\}\), say
\(\phi(p)=1\). The map \(\phi\) is Borel, since it is constant off the finite Borel set
\(\{p_1,\dots,p_m\}\).  Then the hypergraph winning condition at the atom \(p_\ell\)
is
\( 
\phi(p_\ell)^\perp Tp_\ell=0.
\)
Since \(p_\ell H_{\rm in}\) is one-dimensional, this is equivalent to
\( 
\langle\eta_j,T\xi_j\rangle=0
\)
for every \(j\) with \(\xi_j\in p_\ell H_{\rm in}\).  Taking all atoms
\(p_\ell\), we obtain
\( 
\mathcal U_{\phi,\mu}=\mathcal U.
\)
\end{proof}


\section{Rule-operator definition} \label{sec:Gold}

In this section we discuss Goldberg's definition of quantum games
\cite{Goldberg26, Goldberg25}. In classical non-local games, the question and answer
systems are finite sets. Goldberg's quantisation replaces these finite sets by
quantum sets in the sense of Musto--Reutter--Verdon \cite{MRV18}; see also
Kornell's related formulation \cite{Kor20}. Categorically, a finite quantum
set is a special symmetric dagger Frobenius algebra, namely a finite-dimensional
Hilbert space equipped with compatible multiplication and unit maps. Such
Frobenius algebras give a categorical presentation of finite-dimensional
\(C^*\)-algebras \cite{Vic11}; see also \cite[Corollary 5.34]{HV19}.

A \emph{dagger Frobenius algebra} is a finite-dimensional Hilbert space \(X\)
equipped with linear maps \(m:X\otimes X\to X\) and
\(\eta:\mathbb C\to X\), called the multiplication and unit, respectively,
such that \(m\) is associative and unital:
\[
m(m\otimes 1_X)=m(1_X\otimes m),
\qquad
m(\eta\otimes 1_X)=1_X=m(1_X\otimes \eta).
\]
It is also required to satisfy the Frobenius identity
\[
(1_X\otimes m)(m^*\otimes 1_X)
=
m^*m
=
(m\otimes 1_X)(1_X\otimes m^*).
\]
A dagger Frobenius algebra is called \emph{special} if \(mm^*=1_X\), and
\emph{symmetric} if \(\eta^*m\sigma=\eta^*m\), where
\(\sigma:X\otimes X\to X\otimes X\) is the flip map. A \emph{finite quantum
set} is a special symmetric dagger Frobenius algebra.

For the operator-algebraic comparison, it is useful to work with a slightly
more flexible concrete realisation. After choosing faithful traces, we
realise the relevant Frobenius algebras as Hilbert spaces \(L^2(\mathcal R)\)
associated with finite-dimensional tracial von Neumann algebras
\(\mathcal R\). In this realisation, the Frobenius structure is induced by the
multiplication map \(m_{\mathcal R}\), the unit map \(\eta_{\mathcal R}\), and
their adjoints. We use this concrete form to translate Goldberg's rule
operators into projections, and then into operator bimodules.

Goldberg's original formalism uses the \emph{special normalisation}
\(m_{\mathcal R}m_{\mathcal R}^*=1_{L^2(\mathcal R)}\). In the comparison
below, we allow arbitrary faithful traces. Thus we work with
symmetric dagger Frobenius algebras that need not be special, obtaining a
trace-dependent extension of Goldberg's rule-operator equations. The special
normalisation recovers Goldberg's original setting. The operator-algebraic
techniques employed in this section are in the same spirit as those employed by Daws
\cite{Daws24} in comparing different definitions of quantum graphs.

We recall the necessary background.  In this section, a tracial von Neumann
algebra is a pair \((\mathcal M,\tau_{\mathcal M})\), where \(\mathcal M\)
is finite dimensional and \(\tau_{\mathcal M}\) is a faithful positive
tracial functional; thus
\[
\tau_{\mathcal M}(xy)=\tau_{\mathcal M}(yx),
\qquad
\tau_{\mathcal M}(x^*x)>0 \quad (x\neq 0).
\]
 The opposite algebra is equipped with the trace
\(\tau_{\mathcal M}^{\op}\) given by
\( 
\tau_{\mathcal M}^{\op}(x^{\op})=\tau_{\mathcal M}(x)\),
\( x\in\mathcal M.
\)

The associated \(L^2\)-space is obtained by imposing a Hilbert space structure on $\cl M$,  from the inner product
\[
\langle \widehat x,\widehat y\rangle
=
\tau_{\mathcal M}(x^*y),
\qquad x,y\in\mathcal M,
\]
where \(\widehat x\) denotes the vector represented by \(x\).  Since
\(\mathcal M\) is finite dimensional, the map \(x\mapsto\widehat x\)
identifies \(\mathcal M\) linearly with \(L^2(\mathcal M)\). 
Unless stated
otherwise, traces are not assumed to be normalised.

We write \(\Tr_n\) for the unnormalised trace on \(M_n\). Every
finite-dimensional von Neumann algebra is a finite-dimensional
\(C^*\)-algebra, hence, up to \(*\)-isomorphism, a finite direct sum
\[
\mathcal M=\bigoplus_\alpha M_{n_\alpha}.
\]
If \(1_\alpha\) is the central projection corresponding to the summand
\(M_{n_\alpha}\), then every faithful tracial functional on \(\mathcal M\)
has the form
\[
\tau_{\mathcal M}
=
\sum_\alpha c_\alpha \operatorname{Tr}_{n_\alpha},
\qquad c_\alpha>0.
\]

When a finite-dimensional von Neumann algebra \(\mathcal M\) is faithfully
represented on a Hilbert space \(H\), we shall also use its spatial
decomposition. Namely, up to unitary equivalence,
\[
H
=
\bigoplus_\alpha
(\bb C^{n_\alpha}\otimes E_\alpha),
\qquad
\mathcal M
=
\bigoplus_\alpha
(M_{n_\alpha}\otimes 1_{E_\alpha}),
\]
where the \(E_\alpha\)'s are non-zero multiplicity spaces. In this
representation,
\[
\mathcal M'
=
\bigoplus_\alpha
(1_{n_\alpha}\otimes B(E_\alpha)).
\]

Let \((\mathcal R,\tau_{\mathcal R})\) be a finite-dimensional von Neumann
algebra equipped with a faithful tracial functional. We write \(L^2(\mathcal R)\)
for the induced Hilbert space. The product and unit of \(\mathcal R\)
induce maps
\[
m_{\mathcal R}:L^2(\mathcal R)\otimes L^2(\mathcal R)\to L^2(\mathcal R),
\qquad
\eta_{\mathcal R}:\mathbb C\to L^2(\mathcal R),
\]
given by
\[
m_{\mathcal R}(\widehat x\otimes\widehat y)=\widehat{xy},
\qquad
\eta_{\mathcal R}(\alpha)=\alpha\widehat{1_{\mathcal R}}.
\]
Thus \(\eta_{\mathcal R}^*(\widehat x)=\tau_{\mathcal R}(x)\). With these
maps, \(L^2(\mathcal R)\) is a dagger Frobenius algebra. Moreover, it is
symmetric precisely because \(\tau_{\mathcal R}\) is tracial:
\[
\eta_{\mathcal R}^*m_{\mathcal R}(\widehat x\otimes\widehat y)
=
\tau_{\mathcal R}(xy)
=
\tau_{\mathcal R}(yx)
=
\eta_{\mathcal R}^*m_{\mathcal R}\sigma(\widehat x\otimes\widehat y).
\]

Specialness, however, depends on the choice of trace. More precisely, suppose
that
\[
\mathcal R=\bigoplus_\alpha M_{n_\alpha},
\qquad
\tau_{\mathcal R}=\sum_\alpha c_\alpha\operatorname{Tr}_{n_\alpha},
\qquad c_\alpha>0.
\]
If \(1_\alpha\) denotes the central projection onto the summand
\(M_{n_\alpha}\), viewed also as the corresponding orthogonal projection on
\(L^2(\mathcal R)\), then
\[
m_{\mathcal R}m_{\mathcal R}^*
=
\sum_\alpha \frac{n_\alpha}{c_\alpha}1_\alpha .
\]
Hence this concrete Frobenius algebra is special precisely when
\(c_\alpha=n_\alpha\) for every \(\alpha\). More generally, the trace
\(\tau_{\mathcal R}\) is a \emph{\(\delta\)-form}, meaning that
\(m_{\mathcal R}m_{\mathcal R}^*=\delta^2 1_{L^2(\mathcal R)}\), precisely
when the ratios \(n_\alpha/c_\alpha\) are independent of \(\alpha\).

\subsection{Rule operators, projections and bimodules}

Throughout this work, the symbols \(m_{\mathcal{XY}}\) and
\(m_{\mathcal{AB}}\) denote the multiplication maps of the tensor-products \(L^2(\mathcal X)\otimes L^2(\mathcal Y)\) and
\(L^2(\mathcal A)\otimes L^2(\mathcal B)\), respectively. Thus, under the
identifications
\( 
L^2(\mathcal{XY})\cong L^2(\mathcal X)\otimes L^2(\mathcal Y)\),
\( 
L^2(\mathcal{AB})\cong L^2(\mathcal A)\otimes L^2(\mathcal B),
\)
we have
\( 
m_{\mathcal{XY}}
=
(m_{\mathcal X}\otimes m_{\mathcal Y})
(1\otimes\sigma\otimes 1)
\)
and
\( 
m_{\mathcal{AB}}
=
(m_{\mathcal A}\otimes m_{\mathcal B})
(1\otimes\sigma\otimes 1).
\)
Equivalently,
\( 
m_{\mathcal{XY}}
\bigl((\widehat{x_1}\otimes\widehat{y_1})
\otimes
(\widehat{x_2}\otimes\widehat{y_2})\bigr)
=
\widehat{x_1x_2}\otimes\widehat{y_1y_2},
\)
and similarly for \(m_{\mathcal{AB}}\). In particular, the necessary flip of
the middle tensor factors is built into the notation \(m_{\mathcal{XY}}\) and
\(m_{\mathcal{AB}}\).

\begin{definition} \label{def:gold_game} \rm
Let
\((\mathcal X,\tau_X),(\mathcal Y,\tau_Y),(\mathcal A,\tau_A),(\mathcal B,\tau_B)\)
be finite-dimensional tracial von Neumann algebras. A \emph{rule operator quantum game} or simply \emph{rule operator} from \((\mathcal X,\mathcal Y)\) to
\((\mathcal A,\mathcal B)\) is a linear map
\( 
\lambda:
L^2(\mathcal X)\otimes L^2(\mathcal Y)
\to
L^2(\mathcal A)\otimes L^2(\mathcal B)
\)
satisfying
\begin{enumerate}
    \item[(i)] $m_{\mathcal{AB}}(\lambda\otimes\lambda)m_{\mathcal{XY}}^*=\lambda$;
    \item[(ii)] $(1_{L^2(\mathcal{XY})}\otimes \eta_{\mathcal{AB}}^{*}m_{\mathcal{AB}})
(1_{L^2(\mathcal{XY})} \otimes\lambda\otimes 1_{L^2(\mathcal{AB})})
(m_{\mathcal{XY}}^{*}\eta_{\mathcal{XY}}\otimes
1_{L^2(\mathcal{AB})})
=\lambda^{*}.$
\end{enumerate}

\end{definition}

The first condition says that \(\lambda\) is idempotent for the Frobenius
convolution product induced by multiplication, while the second says that it is ``self-conjugate". For simplicity we may write $ 1_{\mathcal X\mathcal Y}$ for $\operatorname{1}_{L^2(\mathcal{XY})}$ or simply $1$ when it is clear from the context.

\begin{remark}\rm
In the classical case this construction recovers the usual non-local game
formalism. Suppose
$\mathcal X=D_X $, $ \mathcal Y = D_Y$, $ \mathcal A=D_A $ and $\mathcal B= D_B$. Then,  with respect to the canonical trace, $L^{2}(D_n) = \bb C^n$, $n \in \bb N$ and we   write
\( 
\lambda(e_{xy})=\sum_{a,b}\lambda(a,b,x,y)e_{ab}.
\)
Since \(m_{\mathcal{XY}}^*e_{xy}=e_{xy}\otimes e_{xy}\), the first Goldberg
condition gives
\[
m_{\mathcal{AB}}(\lambda\otimes\lambda)m_{\mathcal{XY}}^*e_{xy}
=
\sum_{a,b}\lambda(a,b,x,y)^2e_{ab}.
\]
Thus \(m_{\mathcal{AB}}(\lambda\otimes\lambda)m_{\mathcal{XY}}^*=\lambda\)
is equivalent to
\(\lambda(a,b,x,y)^2=\lambda(a,b,x,y)\) for all \(x,y,a,b\), and hence the
coefficients are zero or one.

The second Goldberg condition is automatic for such coefficients. Indeed,
\(m_{\mathcal{XY}}^*\eta_{\mathcal{XY}}(1)=\sum_{x,y}e_{xy}\otimes e_{xy}\),
while
\(\eta_{\mathcal{AB}}^*m_{\mathcal{AB}}(e_{cd}\otimes e_{ab})
=\delta_{(c,d),(a,b)}\). Hence, applied to \(e_{ab}\), the left-hand side of
the self-conjugacy axiom is
\[
\begin{aligned}
(1\otimes\eta_{\mathcal{AB}}^*m_{\mathcal{AB}})
(1\otimes\lambda\otimes1)
(m_{\mathcal{XY}}^*\eta_{\mathcal{XY}}\otimes1)(e_{ab})  & =
(1\otimes\eta_{\mathcal{AB}}^*m_{\mathcal{AB}})
\sum_{x,y,c,d}\lambda(c,d,x,y)e_{xy}\otimes e_{cd}\otimes e_{ab}  \\
& =
\sum_{x,y}\lambda(a,b,x,y)e_{xy}.
\end{aligned}
\]
On the other hand,
\(\lambda^*(e_{ab})=\sum_{x,y}\overline{\lambda(a,b,x,y)}e_{xy}\).
Thus the second condition says that the coefficients are real. Since the first
condition already forces them to be \(0\) or \(1\), it is automatic. Therefore
Goldberg rule operators on classical diagonal algebras are exactly classical
rule predicates.
\end{remark}

Following \cite{Daws26}, we will use the following identification. If \(a\in\mathcal N\) and
\(b\in\mathcal M\), write
\( 
\widehat a\,\widehat b^{\,*}:L^2(\mathcal M)\to L^2(\mathcal N)\), for the rank-one operator
\( 
(\widehat a\,\widehat b^{\,*})(\widehat x)
=
\langle \widehat b,\widehat x\rangle\,\widehat a
=
\tau_{\mathcal M}(b^*x)\widehat a .
\)
Define
\(\Psi_{\M,\N}:\B(L^2(\mathcal M),L^2(\mathcal N))\to
\mathcal N\otimes\mathcal M^{\op}\) on rank-one operators by
\[
\Psi_{\M,\N}(\widehat a\,\widehat b^{\,*})=a\otimes b^* .
\]
and note that $\Psi_{\M,\N}$ is a linear isomorphism. We write $ \Psi_{\M}:= \Psi_{\M,\M}$. Let
\( 
\sigma_{\M,\N}:\mathcal N\otimes\mathcal M^{\op}
\to
\mathcal M\otimes\mathcal N^{\op}
\)
be the anti-*-homomorphism given by
\(\sigma_{\mathcal M,\mathcal N}(a\otimes b^{\op})=b\otimes a^{\op}\). We write $ \sigma_{\M}:= \sigma_{\M,\M}$.

Item (i) in the following theorem follows as the tracial version of \cite[Theorem 5.3]{Daws26}. See also \cite[Theorem 7.7]{MRV18}, \cite[Proposition 5.5]{Daws24}. We provide the proof for completeness.
\begin{theorem}\label{thm:goldberg-daws-transform}
Let \((\mathcal M,\tau_{\mathcal M})\),  \((\mathcal N,\tau_{\mathcal N})\) be finite-dimensional tracial von Neumann
algebras. For \(S,T\in B(L^2(\mathcal M),L^2(\mathcal N))\), one has
\begin{enumerate}
\item[(i)]
\( 
\Psi_{\M,\N}\big(m_{\mathcal N}(S\otimes T)m_{\mathcal M}^*\big)
=
\Psi_{\M,\N}(S)\Psi_{\M,\N}(T);
\)
\item[(ii)]
\( 
\Psi_{\N,\M}\Big(
(1\otimes \eta_{\mathcal N}^*m_{\mathcal N})
(1\otimes T\otimes 1)
(m_{\mathcal M}^*\eta_{\mathcal M}\otimes 1)
\Big)
=
\sigma_{\mathcal M,\mathcal N}(\Psi_{\M,\N}(T));
\)
\item[(iii)] $\eta^*_{\mathcal N} T \eta_{\mathcal M}(1)= \tau_{\mathcal N} \otimes \tau_{\mathcal M^{\op}}(\Psi_{\M,\N}(T))$
\end{enumerate}
Moreover,
\( 
\sigma_{\mathcal N,\mathcal M}(\Psi_{\N,\M}(T^*))=\Psi_{\M,\N}(T)^* .
\)
\end{theorem}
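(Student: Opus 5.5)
Every identity in the statement is linear in \(T\), and (i) is bilinear in \((S,T)\); the final identity is conjugate-linear in \(T\) on both sides. Since \(\Psi_{\M,\N}\) is a linear isomorphism and rank-one operators span \(\B(L^2(\mathcal M),L^2(\mathcal N))\), I will verify each identity on \(S=\widehat a\,\widehat b^{\,*}\) and \(T=\widehat c\,\widehat d^{\,*}\) with \(a,c\in\mathcal N\) and \(b,d\in\mathcal M\). The one preliminary computation needed is the adjoint of multiplication. From \(\langle m_{\mathcal M}^*\widehat x,\widehat u\otimes\widehat v\rangle=\tau_{\mathcal M}(x^*uv)\) we read off that \(m_{\mathcal M}^*\) is determined by this pairing. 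I will never need an explicit formula for \(m_{\mathcal M}^*\widehat x\) (which would require a dual basis and the constants \(n_\alpha/c_\alpha\)). Instead I will always pair \(m_{\mathcal M}^*\) against elementary tensors, so that only traces appear. This is the key device for avoiding the non-special normalisation.

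For (i), apply \(m_{\mathcal N}(S\otimes T)m_{\mathcal M}^*\) to \(\widehat x\). We have \((S\otimes T)m_{\mathcal M}^*\widehat x=\langle \widehat b\otimes\widehat d, m_{\mathcal M}^*\widehat x\rangle\,\widehat a\otimes\widehat c=\tau_{\mathcal M}((bd)^*x)\,\widehat a\otimes\widehat c\). Applying \(m_{\mathcal N}\) gives \(\widehat{ac}\,\widehat{bd}^{\,*}\). Hence
\[
\Psi_{\M,\N}\big(m_{\mathcal N}(S\otimes T)m_{\mathcal M}^*\big)=ac\otimes (bd)^*=ac\otimes d^*b^* .
\]
In \(\mathcal M^{\op}\), the product of \(b^*\) and \(d^*\) is \((b^*)^{\op}(d^*)^{\op}=(d^*b^*)^{\op}\). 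Therefore the expression above equals \((a\otimes b^*)(c\otimes d^*)=\Psi_{\M,\N}(S)\Psi_{\M,\N}(T)\), as required. For (iii), \(\eta_{\mathcal N}^*T\eta_{\mathcal M}(1)=\tau_{\mathcal M}(d^*)\tau_{\mathcal N}(c)\), which equals \((\tau_{\mathcal N}\otimes\tau_{\mathcal M}^{\op})(c\otimes d^*)\) by the definition of \(\tau^{\op}\).

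For (ii), I will compute matrix coefficients of the operator \(L\colon L^2(\mathcal N)\to L^2(\mathcal M)\) on the left-hand side. The coefficient \(\langle \widehat y,L\widehat z\rangle\) equals \(\langle m_{\mathcal M}^*\widehat 1,\ \widehat y\otimes W\rangle\), where \(W=(T^*\otimes 1)\) applied to \(m_{\mathcal N}^*\eta_{\mathcal N}\) paired appropriately with \(\widehat z\). A cleaner way to organise this is to expand \(m_{\mathcal M}^*\eta_{\mathcal M}(1)=\sum_k \widehat{f_k}\otimes \widehat{g_k}\); then \(\sum_k\tau(f_k^*u)\tau(g_k^*v)=\tau(uv)\). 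With this expansion,
\[
L\widehat z=\sum_k \widehat{f_k}\,\tau_{\mathcal M}(d^*g_k)\,\tau_{\mathcal N}(cz).
\]
Pairing with \(\widehat y\) gives \(\tau_{\mathcal N}(cz)\sum_k\tau(y^*f_k)\tau(d^*g_k)\). Using the defining identity with \(u\), \(v\) replaced via conjugation (taking adjoints of \(\tau(f_k^*y)\)), this collapses to \(\tau_{\mathcal M}(y^*d^*)\,\tau_{\mathcal N}(cz)\). So \(L=\widehat{d^*}\,\widehat{c^*}^{\,*}\), and therefore \(\Psi_{\N,\M}(L)=d^*\otimes c=\sigma_{\M,\N}(c\otimes d^*)\). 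The main obstacle is exactly this step: correctly handling the conjugate-linearity when rewriting \(\sum_k\tau(y^*f_k)\tau(d^*g_k)\). I will first check that \(m_{\mathcal M}^*\widehat 1\) is invariant under the antiunitary \(\widehat u\otimes\widehat v\mapsto \widehat{v^*}\otimes\widehat{u^*}\), which follows from \(\tau(uv)\) being real-symmetric under \(*\) and the trace property, and use that invariance to justify the manipulation.

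Finally, \(T^*=\widehat d\,\widehat c^{\,*}\), so \(\Psi_{\N,\M}(T^*)=d\otimes c^*\), and
\[
\sigma_{\mathcal N,\mathcal M}(d\otimes c^*)=c^*\otimes d=(c\otimes d^*)^*=\Psi_{\M,\N}(T)^*.
\]
Conjugate-linearity of both sides in \(T\) then extends the identity from rank-one operators to all \(T\).
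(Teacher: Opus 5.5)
Your proposal is correct and follows the paper's proof essentially step for step: you reduce to rank-one operators, compute $m_{\mathcal M}^*$ only through pairings, and for (ii) expand $m_{\mathcal M}^*\eta_{\mathcal M}(1)$ as a finite sum of simple tensors. The step you flag as the main obstacle is immediate, since $\sum_k\tau_{\mathcal M}(y^*f_k)\tau_{\mathcal M}(d^*g_k)=\langle \widehat y\otimes\widehat d,\,m_{\mathcal M}^*\eta_{\mathcal M}(1)\rangle=\tau_{\mathcal M}(d^*y^*)=\tau_{\mathcal M}(y^*d^*)$ by traciality (this is how the paper shows $\sum_i\xi_i\langle\widehat b,\zeta_i\rangle=\widehat{b^*}$), so the antiunitary-invariance check is unnecessary.
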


\begin{proof}
It suffices throughout to check rank-one operators.

(i) Let \(S=\widehat a\,\widehat b^{\,*}\) and
\(T=\widehat c\,\widehat d^{\,*}\). For \(x\in\mathcal M\) and
\(y\in\mathcal N\),  we get
\begin{align*}
\big\langle \widehat y,
m_{\mathcal N}(S\otimes T)m_{\mathcal M}^*\widehat x
\big\rangle
&=
\big\langle \widehat b\otimes \widehat d,
m_{\mathcal M}^*\widehat x\big\rangle
\big\langle m_{\mathcal N}^* \widehat y,
\widehat a \otimes \widehat c\big\rangle\\
&=
\big\langle \widehat{bd},\widehat x\big\rangle
\big\langle \widehat y,\widehat{ac}\big\rangle
\\
&=
\big\langle \widehat y,\widehat{ac}\,\widehat{bd}^{\,*}\widehat x\big\rangle.
\end{align*}
Thus \(m_{\mathcal N}(S\otimes T)m_{\mathcal M}^*
=\widehat{ac}\,\widehat{bd}^{\,*}\). Hence
\[\Psi_{\M,\N}(m_{\mathcal N}(S\otimes T)m_{\mathcal M}^*)
=ac\otimes (bd)^*=ac\otimes d^*b^*
=(a\otimes b^*)(c\otimes d^*)=\Psi_{\M,\N}(S)\Psi_{\M,\N}(T),\] where the second tensor
factor is \(\mathcal M^{\op}\).

(ii) Let \(T=\widehat a\,\widehat b^{\,*}\), and write
\(m_{\mathcal M}^*\eta_{\mathcal M}(1)=\sum_i \xi_i\otimes \zeta_i\) as a
finite sum of simple tensors. For \(\widehat z\in L^2(\mathcal N)\), we have 
\begin{align*}
&(1\otimes \eta_{\mathcal N}^*m_{\mathcal N})
(1\otimes T\otimes 1)
(m_{\mathcal M}^*\eta_{\mathcal M}\otimes 1)(\widehat z) \\
&\qquad \qquad =
(1\otimes \eta_{\mathcal N}^*m_{\mathcal N})
\Bigl(\sum_i
\xi_i\otimes
\langle\widehat b,\zeta_i\rangle\,
\widehat a\otimes\widehat z
\Bigr)\\
&\qquad \qquad =
\sum_i
\xi_i\,\langle\widehat b,\zeta_i\rangle\,
\eta_{\mathcal N}^*m_{\mathcal N}
(\widehat a\otimes\widehat z)\\
&\qquad \qquad=
\sum_i
\xi_i\,\langle\widehat b,\zeta_i\rangle\,
\tau_{\mathcal N}(az)
=
\sum_i
\xi_i\,\langle\widehat b,\zeta_i\rangle\,
\langle\widehat{a^*},\widehat z\rangle .
\end{align*}
 Moreover,
by the defining property of \(m_{\mathcal M}^*\eta_{\mathcal M}(1)\), one has
\(\sum_i\xi_i\langle\widehat b,\zeta_i\rangle=\widehat{b^*}\). Indeed, pairing the left hand side with \(\widehat x\in L^2(\mathcal M)\) gives
\(\langle \widehat x\otimes\widehat b,m_{\mathcal M}^*\eta_{\mathcal M}(1)\rangle
=\langle\widehat{xb},\widehat 1\rangle
=\langle\widehat x,\widehat{b^*}\rangle\).
Hence  \[ (1\otimes \eta_{\mathcal N}^*m_{\mathcal N})
(1\otimes T\otimes 1)
(m_{\mathcal M}^*\eta_{\mathcal M}\otimes 1) =\widehat{b^*}\,\widehat{a^*}^{\,*}.\] Applying \(\Psi_{\N,\M}\), we get
\[
\Psi_{\N,\M}(\widehat{b^*}\,\widehat{a^*}^{\,*})
=
b^*\otimes a
=
\sigma_{\mathcal M,\mathcal N}(a\otimes b^*)
=
\sigma_{\mathcal M,\mathcal N}(\Psi_{\M,\N}(T)),
\]
which proves the claim (ii).

(iii)  Let
\(T=\widehat a \widehat{b}^*\), with \(a\in\mathcal N\) and
\(b\in\mathcal M\).  Then \(T\widehat x=\tau_{\mathcal M}(b^*x)\widehat a\)
and hence
\[
(\tau_{\mathcal N}\otimes\tau_{\mathcal M^{\op}})(\Psi_{\M,\N}(T))
=
\tau_{\mathcal N}(a)\tau_{\mathcal M}(b^*).
\]
On the other hand,
\[
\eta_{\mathcal N}^*T\eta_{\mathcal M}(1)
=
\eta_{\mathcal N}^*T\widehat 1_{\mathcal M}
=
\eta_{\mathcal N}^*(\tau_{\mathcal M}(b^*)\widehat a)
=
\tau_{\mathcal M}(b^*)\tau_{\mathcal N}(a).
\]
Thus the two sides agree on rank-one maps, and therefore, by linearity, on all
\(T\in \B(L^2(\mathcal M),L^2(\mathcal N))\).

Finally, for \(T^*=\widehat b\,\widehat a^{\,*}\), so
\[ \sigma_{\mathcal N,\mathcal M}(\Psi_{\N,\M}(T^*))
=\sigma_{\mathcal N,\mathcal M}(b\otimes a^*)=a^*\otimes b
=(a\otimes b^*)^*=\Psi_{\M,\N}(T)^*.\]
\end{proof}

\begin{corollary}\label{cor:goldberg-rule-projection}
Let
\(\mathcal M=\mathcal X\otimes\mathcal Y\) and
\(\mathcal N=\mathcal A\otimes\mathcal B\),  and let
\(\lambda:L^2(\mathcal M)\to L^2(\mathcal N)\) be a rule operator quantum game. Set
\(Q_{\lambda}=\Psi_{\M,\N}(\lambda)\in\mathcal N\otimes\mathcal M^{\op}\). Then
\(Q_{\lambda}\) is a projection.
\end{corollary}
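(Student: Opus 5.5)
The plan is to transport both rule-operator axioms of Definition~\ref{def:gold_game} through the isomorphism $\Psi_{\M,\N}$ using Theorem~\ref{thm:goldberg-daws-transform}. Idempotency will come from part (i), and self-adjointness from part (ii) combined with the final identity of that theorem.

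Concretely, condition (i) of Definition~\ref{def:gold_game} reads $m_{\mathcal N}(\lambda\otimes\lambda)m_{\mathcal M}^*=\lambda$, with $m_{\mathcal N}=m_{\mathcal{AB}}$ and $m_{\mathcal M}=m_{\mathcal{XY}}$. First we check that these are the multiplication maps of the tensor-product tracial algebras $(\mathcal X\otimes\mathcal Y,\tau_X\otimes\tau_Y)$ and $(\mathcal A\otimes\mathcal B,\tau_A\otimes\tau_B)$ under $L^2(\mathcal X\otimes\mathcal Y)\cong L^2(\mathcal X)\otimes L^2(\mathcal Y)$. This is immediate from the formula $m_{\mathcal{XY}}((\widehat{x_1}\otimes\widehat{y_1})\otimes(\widehat{x_2}\otimes\widehat{y_2}))=\widehat{x_1x_2}\otimes\widehat{y_1y_2}$, together with the fact that the product trace makes the identification unitary. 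Likewise $\eta_{\mathcal M}$ and $\eta_{\mathcal N}$ are the unit maps. Applying $\Psi_{\M,\N}$ and Theorem~\ref{thm:goldberg-daws-transform}(i) with $S=T=\lambda$ then gives $Q_\lambda^2=Q_\lambda$.

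For self-adjointness, apply $\Psi_{\N,\M}$ to condition (ii). By Theorem~\ref{thm:goldberg-daws-transform}(ii), the left-hand side becomes $\sigma_{\M,\N}(\Psi_{\M,\N}(\lambda))=\sigma_{\M,\N}(Q_\lambda)$, so $\sigma_{\M,\N}(Q_\lambda)=\Psi_{\N,\M}(\lambda^*)$. Now apply $\sigma_{\N,\M}$ to both sides. The maps $\sigma_{\M,\N}$ and $\sigma_{\N,\M}$ are mutually inverse: on elementary tensors $a\otimes b^{\op}\mapsto b\otimes a^{\op}\mapsto a\otimes b^{\op}$, and we extend by linearity. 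Hence
\[
Q_\lambda=\sigma_{\N,\M}(\Psi_{\N,\M}(\lambda^*))=\Psi_{\M,\N}(\lambda)^*=Q_\lambda^*,
\]
where the middle equality is the final assertion of Theorem~\ref{thm:goldberg-daws-transform} with $T=\lambda$. An idempotent self-adjoint element of the C$^*$-algebra $\mathcal N\otimes\mathcal M^{\op}$ is a projection, which finishes the argument.

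The only step needing care is the bookkeeping identification of $m_{\mathcal{XY}}$ and $\eta_{\mathcal{XY}}$ (with the built-in middle flip) with the Frobenius structure of the tracial algebra $\mathcal X\otimes\mathcal Y$. Once this is done, Theorem~\ref{thm:goldberg-daws-transform} applies verbatim with $\mathcal M=\mathcal X\otimes\mathcal Y$ and $\mathcal N=\mathcal A\otimes\mathcal B$. A second, minor check is that $\sigma$ is linear, not conjugate-linear, so that inverting it is legitimate; this is fine, since it is defined linearly on elementary tensors.
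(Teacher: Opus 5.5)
Your proposal is correct and follows the paper's proof essentially verbatim. Idempotency comes from Theorem~\ref{thm:goldberg-daws-transform}(i); self-adjointness comes from transporting the self-conjugacy axiom through part (ii), applying $\sigma_{\N,\M}$, and using the final identity of that theorem. Your extra check that $m_{\mathcal{XY}}$ and $\eta_{\mathcal{XY}}$ are the Frobenius structure of $(\mathcal X\otimes\mathcal Y,\tau_X\otimes\tau_Y)$ is something the paper takes for granted through its canonical identification $L^2(\mathcal X)\otimes L^2(\mathcal Y)\cong L^2(\mathcal M)$.
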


\begin{proof}
The definition and Theorem~\ref{thm:goldberg-daws-transform}(i)
give
\[
Q_{\lambda}^2
=
\Psi_{\M,\N}\big(m_{\mathcal N}(\lambda\otimes\lambda)m_{\mathcal M}^*\big)
=
\Psi_{\M,\N}(\lambda)
=
Q_{\lambda} .
\]
It remains to prove self-adjointness. Applying
Theorem~\ref{thm:goldberg-daws-transform}(ii) to the second Goldberg condition
gives
\(\sigma_{\mathcal M,\mathcal N}(Q_{\lambda})=\Psi_{\N,\M}(\lambda^*)\). Hence, applying
\(\sigma_{\mathcal N,\mathcal M}\) and using the final  identity in
Theorem~\ref{thm:goldberg-daws-transform},
\[
Q_{\lambda}
=
\sigma_{\mathcal N,\mathcal M}\sigma_{\mathcal M,\mathcal N}(Q_{\lambda})
=
\sigma_{\mathcal N,\mathcal M}(\Psi_{\N,\M}(\lambda^*))
=
Q_{\lambda}^* .
\]
Thus \(Q_{\lambda}=Q_{\lambda}^*=Q_{\lambda}^2\).
\end{proof}

We now pass from the projection \(Q_{\lambda}\) to an operator bimodule.  Recall the discussion before Lemma \ref{lem:vec-bimodule-action-general}.  Let
\(\mathcal M\subseteq \B(H)\) and \(\mathcal N\subseteq \B(K)\) be faithful
finite-dimensional representations.  We represent \(\mathcal M^{\op}\subseteq \B(\overline{H})\). 
Let
\( 
\operatorname{vec}_{}:\B(H,K)\to K\otimes\overline H
\)
be the vectorisation determined by
\( 
\operatorname{vec}_{}(\xi\eta^*)=\xi\otimes\overline\eta\), \(  \xi\in K,\ \eta\in H.
\)
By Lemma~\ref{lem:vec-bimodule-action-general}, for
\(x'\in\mathcal N'\), \(y'\in\mathcal M'\), and \(T\in \B(H,K)\), we have
\[
\operatorname{vec}_{}(x'Ty')
=
\bigl(x'\otimes (y')^{\op}\bigr)\operatorname{vec}_{}(T).
\]
Thus the \(\mathcal N'\)-\(\mathcal M'\) bimodule action on \(\B(H,K)\) is
transported, under vectorisation, to the natural action of
\(\mathcal N'\otimes(\mathcal M')^{\op}\) on \(K\otimes\overline H\).

The following fact appears in \cite{Daws26} (see the discussion after Example 5.5 therein). Compare also with \cite[Proposition 2.23]{Wea12}, \cite[Proposition 7.11]{MRV18} and \cite[Corollary 5.28]{Daws24}. We include a proof for completeness.
\begin{proposition}\label{prop:rectangular-daws}
Let \(\mathcal M\subseteq \B(H)\) and \(\mathcal N\subseteq \B(K)\) be von Neumann algebras. There is a bijection between projections in 
\(\mathcal N\otimes\mathcal M^{\op}\) and
\(\mathcal N'\)-\(\mathcal M'\) operator bimodules of
\( \B(H,K)\). It is given by
\[
Q\longmapsto
\operatorname{vec}_{}^{-1}
\bigl(Q(K\otimes\overline H)\bigr).
\]
Conversely, if \(\mathcal U\subseteq \B(H,K)\) is an
\(\mathcal N'\)-\(\mathcal M'\) bimodule, then the corresponding projection 
 \(Q_{\mathcal U}\in\mathcal N\otimes\mathcal M^{\op}\)  is the orthogonal projection onto
\( 
\operatorname{vec}_{}(\mathcal U)
\subseteq K\otimes\overline H .
\)
\end{proposition}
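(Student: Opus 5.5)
The plan is to transport everything through the vectorisation map and use the commutation theorem already recorded before Lemma~\ref{lem:vec-bimodule-action-general}, namely
\[
\bigl(\mathcal N'\otimes(\mathcal M')^{\op}\bigr)'=\mathcal N\otimes\mathcal M^{\op}\quad\text{on }K\otimes\overline H .
\]
Since $H,K$ are finite dimensional here (standing convention), $\operatorname{vec}:\B(H,K)\to K\otimes\overline H$ is a linear bijection. It therefore induces a bijection between linear subspaces of $\B(H,K)$ and (automatically closed) subspaces of $K\otimes\overline H$. Subspaces of $K\otimes\overline H$ correspond in turn bijectively to orthogonal projections via $E\mapsto P_E$, with inverse $Q\mapsto Q(K\otimes\overline H)$. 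It thus suffices to show that this composite bijection restricts to a bijection between $\mathcal N'$-$\mathcal M'$ bimodules and projections lying in $\mathcal N\otimes\mathcal M^{\op}$.

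First, let $\mathcal U$ be an $\mathcal N'$-$\mathcal M'$ bimodule. By Lemma~\ref{lem:vec-bimodule-action-general}, $(x'\otimes(y')^{\op})\operatorname{vec}(T)=\operatorname{vec}(x'Ty')$. Hence $\operatorname{vec}(\mathcal U)$ is invariant under the elementary tensors $x'\otimes (y')^{\op}$, and by linearity under the $*$-algebra $\mathcal N'\otimes(\mathcal M')^{\op}$ they span. Invariance under a self-adjoint set makes the subspace reducing, so $Q_{\mathcal U}=P_{\operatorname{vec}(\mathcal U)}$ commutes with $\mathcal N'\otimes(\mathcal M')^{\op}$. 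By the commutation theorem, $Q_{\mathcal U}\in\mathcal N\otimes\mathcal M^{\op}$. This is exactly the argument already used in the proof of Theorem~\ref{thm:abstract-projection-test}, and I would simply cite it.

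Conversely, let $Q\in\mathcal P(\mathcal N\otimes\mathcal M^{\op})$ and set $\mathcal U=\operatorname{vec}^{-1}(Q(K\otimes\overline H))$. Then $Q$ commutes with every $x'\otimes(y')^{\op}$, so its range is invariant under these operators. Applying Lemma~\ref{lem:vec-bimodule-action-general} in reverse, for $T\in\mathcal U$ we get $\operatorname{vec}(x'Ty')\in Q(K\otimes\overline H)$, i.e.\ $x'Ty'\in\mathcal U$. Thus $\mathcal U$ is a bimodule. The two assignments are mutually inverse because $\operatorname{vec}$ is bijective and a projection is determined by its range: $P_{\operatorname{vec}(\operatorname{vec}^{-1}(QK\otimes\overline H))}=Q$, and $\operatorname{vec}^{-1}(P_{\operatorname{vec}(\mathcal U)}(K\otimes\overline H))=\mathcal U$.

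The only substantive point is the identification of the commutant $\bigl(\mathcal N'\otimes(\mathcal M')^{\op}\bigr)'$. This rests on the representation convention $(\mathcal M^{\op})'=(\mathcal M')^{\op}$ on $\overline H$ and on Tomita's commutation theorem, both already stated in the paper. The remaining care is in checking that the $\op$-representation matches the right action in Lemma~\ref{lem:vec-bimodule-action-general} (it does, as the lemma shows); everything else is bookkeeping.
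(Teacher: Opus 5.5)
Your proposal is correct and follows essentially the same route as the paper: you transport the bimodule action through vectorisation via Lemma~\ref{lem:vec-bimodule-action-general}, use that an invariant subspace for a self-adjoint algebra is reducing, and identify the commutant as \(\mathcal N\otimes\mathcal M^{\op}\) via Tomita. For the converse you use that a projection in \(\mathcal N\otimes\mathcal M^{\op}\) commutes with \(\mathcal N'\otimes(\mathcal M')^{\op}\), and you get mutual inverseness from bijectivity of \(\operatorname{vec}\). You spell out that last step more explicitly than the paper does.
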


\begin{proof}
By the preceding discussion, a subspace
\(\mathcal U\subseteq \B(H,K)\) is an \(\mathcal N'\)-\(\mathcal M'\) operator
bimodule if and only if \(\operatorname{vec}_{}(\mathcal U)\) is invariant
under
\( 
\mathcal N'\otimes(\mathcal M')^{\op}.
\)
Since this algebra is self-adjoint, invariance is equivalent to reducingness.
Thus the orthogonal projection \(Q_{\mathcal U}\) onto
\(\operatorname{vec}_{}(\mathcal U)\) commutes with
\(\mathcal N'\otimes(\mathcal M')^{\op}\). Hence
\[
Q_{\mathcal U}
\in
\bigl(\mathcal N'\otimes(\mathcal M')^{\op}\bigr)'
=
\mathcal N\otimes\mathcal M^{\op}
\]
inside \(\B(K\otimes\overline H)\). 

Conversely, if \(Q\in\mathcal N\otimes\mathcal M^{\op}\) is a projection, then
\(Q\) commutes with \(\mathcal N'\otimes(\mathcal M')^{\op}\). Hence
\( 
Q(K\otimes\overline H)
\)
is invariant under \(\mathcal N'\otimes(\mathcal M')^{\op}\). By the
vectorisation identity, the space
\( 
\operatorname{vec}_{}^{-1}
\bigl(Q(K\otimes\overline H)\bigr)
\)
is therefore an \(\mathcal N'\)-\(\mathcal M'\) operator bimodule. The two
constructions are inverse to one another.
\end{proof}

\subsection{Connection to projection-test quantum games}
\label{subsec:connect_coop}

Let \(\mathcal M=\mathcal X\otimes\mathcal Y\) and
\(\mathcal N=\mathcal A\otimes\mathcal B\). Throughout this work we use the canonical identification
$L^2(\mathcal X)\otimes L^2(\mathcal Y)
\cong L^2(\mathcal M)$.  Let
\(\lambda:L^2(\mathcal M)\to L^2(\mathcal N)\) be a rule operator quantum game and
set \(Q_\lambda=\Psi_{\M,\N}(\lambda)\in\mathcal N\otimes\mathcal M^{\op}\).  By
Corollary~\ref{cor:goldberg-rule-projection}, \(Q_\lambda\) is a projection.   

Choose faithful finite-dimensional representations
\( 
\mathcal X\subseteq \B(H_X)\), 
\(\mathcal Y\subseteq \B(H_Y)\), 
\( \mathcal A\subseteq \B(H_A)\) and
\( \mathcal B\subseteq \B(H_B).
\)
Put
\( 
H_{\rm in}=H_X\otimes H_Y\),
\( 
H_{\rm out}=H_A\otimes H_B,
\)
and represent \(\mathcal M\) and \(\mathcal N\) on these Hilbert spaces in the
natural tensor-product representations.    By
Proposition~\ref{prop:rectangular-daws}, \(Q_\lambda\) determines the concrete
operator bimodule
\( 
\mathcal U_\lambda
=
\operatorname{vec}^{-1}
\bigl(Q_\lambda(H_{\rm out}\otimes\overline{H_{\rm in}})\bigr)
\subseteq \B(H_{\rm in},H_{\rm out}).
\)
This is an \(\mathcal N'\)-\(\mathcal M'\) bimodule, where the commutants are
taken in \(\B(H_{\rm out})\) and \(\B(H_{\rm in})\), respectively.  Thus the
passage is
\[
\lambda
\overset{\Psi_{\M,\N}}{\longmapsto}
Q_\lambda\in\mathcal N\otimes\mathcal M^{\op}
\longmapsto
\mathcal U_\lambda\subseteq \B(H_{\rm in},H_{\rm out}).
\]

Conversely, every \(\mathcal N'\)-\(\mathcal M'\) operator bimodule
\(\mathcal U\subseteq \B(H_{\rm in},H_{\rm out})\) gives, by
Proposition~\ref{prop:rectangular-daws}, a unique projection
\(Q_{\mathcal U}\in\mathcal N\otimes\mathcal M^{\op}\).  Applying the inverse transform gives
\[
\lambda_{\mathcal U}:=\Psi_{\M,\N}^{-1}(Q_{\mathcal U})
:
L^2(\mathcal M)\to L^2(\mathcal N).
\]
Since \(Q_{\mathcal U}\) is a projection,
Theorem~\ref{thm:goldberg-daws-transform} implies that
\(\lambda_{\mathcal U}\) is a rule operator.  Hence, after fixing
faithful finite-dimensional representations,  rule operators are
equivalent to concrete \(\mathcal N'\)-\(\mathcal M'\) operator bimodules.

The  $\cl R$-projection-test quantum game  associated with \(\lambda\) is
\(G_\lambda=(\Pi_{\mathcal M},Q_{\lambda})\) with referee algebra $ \cl R= \cl M^{\rm op}$, input projection 
\( 
\Pi_{\mathcal M}:=P_{\operatorname{vec}(\mathcal M')} \in  \mathcal M\otimes \cl M^{\rm op},
\) 
and  accepting projection 
\( Q_\lambda:= \Psi_{\M,\N}(\lambda) \in \mathcal N \otimes \mathcal M^{\op}.\)

\subsection{Values and channels}

Let \(L^2(\mathcal M)\) and \(L^2(\mathcal N)\) be  realised from finite-dimensional tracial von Neumann algebras. For a
linear map \(f:L^2(\mathcal M)\to L^2(\mathcal N)\), define
\[
\widetilde f
:=
(1 \otimes m_{\mathcal N})
(1 \otimes f\otimes 1 )
(m^*_{\mathcal M}\otimes 1)
\in \B(L^2(\mathcal M)\otimes L^2(\mathcal N)).
\]
We say that \(f\) is \emph{completely positive} if \(\widetilde f\) is a
positive operator. Equivalently, there exist a finite-dimensional Hilbert space
\(H\) and a linear map \(g:L^2(\mathcal M)\otimes L^2(\mathcal N)\to H\) such that
\( 
\widetilde f=g^*g.
\)
We say that  $f$ is a  \emph{channel} if it is  completely positive and \emph{co-unital} in that
\( 
\eta^*_{\mathcal N}f=\eta^*_{\mathcal M}.
\) Equivalently, $ \langle \widehat 1, f(x)\rangle= \langle \widehat 1,x\rangle$, $ x\in L^{2}(\mathcal M)$.

Let
\( 
(\mathcal X,\tau_{\mathcal X})\),
\( (\mathcal Y,\tau_{\mathcal Y})\),
\((\mathcal A,\tau_{\mathcal A})\) and 
\((\mathcal B,\tau_{\mathcal B})
\)
be finite-dimensional tracial von Neumann algebras. 
A \emph{correlation} from \((\mathcal X,\mathcal Y)\) to
\((\mathcal A,\mathcal B)\) is a channel
\( 
f:L^{2}(\mathcal{X}) \otimes L^{2}(\mathcal Y)\to L^{2}(\mathcal{A}) \otimes L^{2}(\mathcal B).
\)

\medskip 

Since \(\tau_{\mathcal M}\) is tracial, \(\mathcal M^{\op}\) is faithfully
represented on \(L^2(\mathcal M)\) by right multiplication: we write
\(R(c^{\op})\widehat x=\widehat{xc}\).  Similarly, \(\mathcal N\) is
faithfully represented on \(L^2(\mathcal N)\) by left multiplication:
\(L(a)\widehat y=\widehat{ay}\).  We use the corresponding faithful
representation
\[
\rho:\mathcal N\otimes\mathcal M^{\op}
\to B(L^2(\mathcal M)\otimes L^2(\mathcal N)),
\qquad
\rho(a\otimes c^{\op})(\widehat x\otimes\widehat y)
=
\widehat{xc}\otimes\widehat{ay}.
\]
Equivalently, if
\(\Sigma:L^2(\mathcal M)\otimes L^2(\mathcal N)\to
L^2(\mathcal N)\otimes L^2(\mathcal M)\) is the tensor flip, then
\[
\rho(a\otimes c^{\op})
=
\Sigma^*(L(a)\otimes R(c^{\op}))\Sigma .
\]
Thus \(\rho\) is unitarily equivalent to the tensor product of the faithful
representations \(L\) and \(R\), and hence is faithful.

\begin{proposition} \label{prop:cptop}
Let \((\mathcal M,\tau_{\mathcal M})\) and
\((\mathcal N,\tau_{\mathcal N})\) be finite-dimensional tracial von Neumann
algebras. A linear map \(f:L^2(\mathcal M)\to L^2(\mathcal N)\) is completely
positive in the above sense if and only if
\( 
\Psi_{\M,\N}(f)\in(\mathcal N\otimes\mathcal M^{\mathrm{op}})_+.
\)
\end{proposition}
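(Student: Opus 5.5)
The plan is to show that $\widetilde f=\rho(\Psi_{\M,\N}(f))$ for every linear map $f:L^2(\mathcal M)\to L^2(\mathcal N)$. Granting this, the proposition is immediate. The representation $\rho$ is a faithful $*$-representation of the finite-dimensional $C^*$-algebra $\mathcal N\otimes\mathcal M^{\op}$, so an element $z$ of this algebra is positive if and only if $\rho(z)$ is a positive operator. Hence $\widetilde f\geq 0$ if and only if $\Psi_{\M,\N}(f)\in(\mathcal N\otimes\mathcal M^{\op})_+$.

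The identity is linear in $f$, so it suffices to check it on rank-one operators $f=\widehat a\,\widehat b^{\,*}$ with $a\in\mathcal N$ and $b\in\mathcal M$. In that case $\Psi_{\M,\N}(f)=a\otimes b^*$, where the second factor is read in $\mathcal M^{\op}$. We must therefore show
\[
\widetilde f(\widehat x\otimes\widehat y)=\widehat{xb^*}\otimes\widehat{ay},
\qquad x\in\mathcal M,\ y\in\mathcal N .
\]

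To compute $\widetilde f(\widehat x\otimes\widehat y)$, write $m_{\mathcal M}^*\widehat x=\sum_i\xi_i\otimes\zeta_i$. Applying $1\otimes f\otimes 1$ and then $1\otimes m_{\mathcal N}$ gives
\[
\sum_i \xi_i\,\langle\widehat b,\zeta_i\rangle\otimes\widehat{ay}.
\]
It remains to identify $\sum_i\xi_i\langle\widehat b,\zeta_i\rangle$, and I will do this with the same trick used in Theorem~\ref{thm:goldberg-daws-transform}(ii). Pairing with $\widehat w\in L^2(\mathcal M)$ gives
\[
\langle \widehat w\otimes\widehat b, m_{\mathcal M}^*\widehat x\rangle=\langle\widehat{wb},\widehat x\rangle=\tau_{\mathcal M}(b^*w^*x)=\tau_{\mathcal M}(w^*xb^*)=\langle\widehat w,\widehat{xb^*}\rangle,
\]
where the third equality uses traciality. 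The left-hand side equals $\langle\widehat w,\sum_i\xi_i\langle\widehat b,\zeta_i\rangle\rangle$, because the inner product is conjugate-linear in the first variable and the coefficients $\langle\widehat b,\zeta_i\rangle$ sit in that slot. Hence $\sum_i\xi_i\langle\widehat b,\zeta_i\rangle=\widehat{xb^*}$, which is exactly the required identity.

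The main obstacle is getting the conventions consistent rather than any real difficulty. One must check that $\widehat{xb^*}=R((b^*)^{\op})\widehat x$, so that the element $b^*\in\mathcal M^{\op}$ produced by $\Psi_{\M,\N}$ matches right multiplication by $b^*$ under $\rho$. One must also track the order of the tensor legs, which the flip $\Sigma$ in the definition of $\rho$ handles. Finally, the equivalent formulation $\widetilde f=g^*g$ is just the finite-dimensional characterisation of positive operators, so it needs no separate argument.
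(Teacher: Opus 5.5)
Your proposal is correct and follows the paper's proof: both establish $\widetilde f=\rho(\Psi_{\M,\N}(f))$ on rank-one maps $\widehat a\,\widehat b^{\,*}$ and then use the faithfulness of $\rho$. The only difference is how you verify $\widetilde f(\widehat x\otimes\widehat y)=\widehat{xb^*}\otimes\widehat{ay}$. The paper expands $m_{\mathcal M}^*$ on matrix units block by block, whereas your duality-plus-traciality argument (the same trick as in Theorem~\ref{thm:goldberg-daws-transform}(ii)) treats all $x\in\mathcal M$ at once and avoids the block decomposition.
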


\begin{proof}
Let
\( 
\rho:\mathcal N\otimes\mathcal M^{\mathrm{op}}
\to B(L^2(\mathcal M)\otimes L^2(\mathcal N))
\) 
be the faithful representation given as above.
We claim that
\( 
\widetilde f=\rho(\Psi_{\M,\N}(f)).
\)
By linearity, it suffices to check this for
\(f=\widehat a\,\widehat b^{\,*}\), where
\(a\in\mathcal N\) and \(b\in\mathcal M\). In this case
\( 
f(\widehat x)=\widehat a\,\tau_{\mathcal M}(b^*x).
\)
Working blockwise, suppose first that \(\mathcal M=M_n\) and
\(\tau_{\mathcal M}=c\operatorname{Tr}\). Then
\[
m^*_{\mathcal M}(\widehat{E_{ij}})
=
c^{-1}\sum_t \widehat{\varepsilon_{i,t}}\otimes\widehat{\varepsilon_{t,j}}.
\]
Hence, for \(y\in\mathcal N\),
\[
\begin{aligned}
\widetilde f(\widehat{\varepsilon_{i,j}}\otimes\widehat y)
&=
c^{-1}\sum_t
\widehat{\varepsilon_{i,t}}\otimes
f(\widehat{\varepsilon_{t,j}}) \widehat y  \\
&=
c^{-1}\sum_t
\widehat{\varepsilon_{i,t}}\otimes
\widehat{a} \, \tau_{\mathcal M}(b^*\varepsilon_{t,j})\, \widehat y  \\
&= c^{-1}\sum_t
\widehat{\varepsilon_{i,t}}\, \tau_{\mathcal M}(\varepsilon_{t,j}b^*) \otimes
\widehat{a}\, \widehat y  \\
&=
\widehat{\varepsilon_{i,j}b^*}\otimes\widehat{ay}.
\end{aligned}
\]
Here we used the expansion $\widehat z=c^{-1}\sum_{r,s} \widehat \varepsilon_{r,s}\tau_{\mathcal M}(\varepsilon_{s,r}z)$ for an element $ z \in M_n$.
Thus, since the multiplication maps respect the block structure,  for arbitrary \(x\in\mathcal M\),
\[
\widetilde f(\widehat x\otimes\widehat y)
=
\widehat{xb^*}\otimes\widehat{ay}
=
\rho(a\otimes(b^*)^{\mathrm{op}})
(\widehat x\otimes\widehat y).
\]
Therefore
\( 
\widetilde{\widehat a\,\widehat b^{\,*}}
=
\rho\bigl(\Psi_{\M,\N}(\widehat a\,\widehat b^{\,*})\bigr),
\)
and hence \(\widetilde f=\rho(\Psi_{\M,\N}(f))\) for all \(f\). It follows that
\[
\widetilde f\geq 0
\quad\Longleftrightarrow\quad
\rho(\Psi_{\M,\N}(f))\geq 0.
\]
Since \(\rho\) is faithful, this is equivalent to
\( 
\Psi_{\M,\N}(f)\geq 0
\)
inside the finite-dimensional \(C^*\)-algebra
\(\mathcal N\otimes\mathcal M^{\mathrm{op}}\). Finally, positivity in a
 \(C^*\)-algebra is equivalent to being of the form
\(h^*h\). This proves the claim.
\end{proof}

If \(\mathcal M\) is equipped with a faithful trace \(\tau_{\mathcal M}\), we
identify \(\mathcal M\) with its predual through the trace pairing
\[
h\in\mathcal M
\longmapsto
\rho_h\in\mathcal M_*,
\qquad
\rho_h(x)=\tau_{\mathcal M}(xh),
\quad x\in\mathcal M .
\]
Under this identification, positive normal functionals correspond to positive
elements of \(\mathcal M\), and normal states correspond to positive
\(h\in\mathcal M\) satisfying \(\tau_{\mathcal M}(h)=1\).

Schrödinger-picture quantum channels are intrinsically written as completely positive
state-preserving maps
\( 
\Gamma:\mathcal M_*\to\mathcal N_* .
\)
Equivalently, the Banach adjoint
\( 
\Gamma^*:\mathcal N\to\mathcal M
\)
is normal, unital and completely positive.  When faithful traces
\(\tau_{\mathcal M}\) and \(\tau_{\mathcal N}\) are fixed, we also write
\(\Gamma_\tau:\mathcal M\to\mathcal N\) for the corresponding density-picture
map, defined by
\[
\tau_{\mathcal N}(\Gamma_\tau(h)y)
=
\tau_{\mathcal M}(h\Gamma^*(y)),
\qquad h\in\mathcal M,\ y\in\mathcal N .
\]
The map \(\Gamma_\tau\) is completely positive and trace-preserving, in the
sense that
\( 
\tau_{\mathcal N}\circ\Gamma_\tau=\tau_{\mathcal M} 
\) (see for example \cite[Proposition 7.11]{Daws24}).
When no confusion can arise, we suppress the subscript \(\tau\) and denote the
density-picture representative again by \(\Gamma\).

Since \(\mathcal M\) and \(\mathcal N\) are finite-dimensional, every element of
\(\B(\mathcal M,\mathcal N)\) is a finite sum of rank-one maps.  Using the trace
pairing, every functional on \(\mathcal M\) is of the form
\(\rho_{b}: x\mapsto \tau_{\mathcal M}(bx)\) for a unique \(b\in\mathcal M\).  Hence
\(B(\mathcal M,\mathcal N) \cong \mathcal N \otimes \mathcal M^{*}\) is spanned by the maps
\( 
\mathcal M\to\mathcal N\),
\( a \otimes \rho_b: x \mapsto \tau_{\mathcal M}(bx)a,
\)
with \(a\in\mathcal N\) and \(b\in\mathcal M\).

\begin{proposition}\label{prop:L2-channel-algebra-channel}
Let \((\mathcal M,\tau_{\mathcal M})\) and
\((\mathcal N,\tau_{\mathcal N})\) be finite-dimensional tracial von Neumann
algebras, and let \(f:L^2(\mathcal M)\to L^2(\mathcal N)\) be a channel. Let \(\Gamma: \mathcal M \to\mathcal N\) be the unique linear
map determined by \(f(\widehat x)=\widehat{\Gamma(x)}\). Then \(\Gamma\) is
completely positive and trace-preserving in the usual operator-algebraic sense.
Conversely, every completely positive trace-preserving map
\(\Gamma:\mathcal M \to\mathcal N\) gives a channel
\(f:L^2(\mathcal M)\to L^2(\mathcal N)\) by \(f(\widehat x)=\widehat{\Gamma(x)}\).
\end{proposition}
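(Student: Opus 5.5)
The argument splits into two independent parts: co-unitality against trace preservation, and complete positivity of $f$ (in the Frobenius sense) against complete positivity of $\Gamma$. Throughout, $f$ and $\Gamma$ determine each other via $f(\widehat x)=\widehat{\Gamma(x)}$, since $x\mapsto\widehat x$ is a linear bijection $\mathcal M\to L^2(\mathcal M)$.

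\emph{Co-unitality.} Since $\eta_{\mathcal M}^*(\widehat x)=\langle\widehat 1,\widehat x\rangle=\tau_{\mathcal M}(x)$, the condition $\eta^*_{\mathcal N}f=\eta^*_{\mathcal M}$ reads $\tau_{\mathcal N}(\Gamma(x))=\tau_{\mathcal M}(x)$ for all $x\in\mathcal M$. This is exactly trace preservation, and it holds in both directions.

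\emph{Complete positivity.} By Proposition~\ref{prop:cptop}, $f$ is completely positive iff $C:=\Psi_{\M,\N}(f)\in(\mathcal N\otimes\mathcal M^{\op})_+$. So it suffices to show that $\Gamma$ is completely positive iff $C\ge 0$, i.e.\ to identify $C$ with a Choi-type element of $\Gamma$. On a rank-one map $f=\widehat a\,\widehat b^{\,*}$ we have $\Gamma(x)=\tau_{\mathcal M}(b^*x)a$ and $C=a\otimes (b^*)^{\op}$. Linearity then gives the general formula
\[
\Gamma(x)=(\id\otimes\tau_{\mathcal M^{\op}})\bigl(C(1\otimes x^{\op})\bigr),
\]
with the order of the product to be checked carefully. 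This exhibits $\Gamma$ as a ``slice'' of $C$.

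If $C=\sum_k c_k^*c_k\ge0$, we want $\Gamma$ to be CP. Reduce blockwise: take $\mathcal M=\bigoplus_\alpha M_{n_\alpha}$ with $\tau_{\mathcal M}=\sum c_\alpha\Tr$. On each block $M_{n}$, identify $M_n^{\op}\cong M_n$ via the transpose. Then the displayed formula is, up to the positive scalar $c_\alpha$ and the transpose, the standard Choi--Jamio{\l}kowski inversion $\Gamma(x)=\Tr_2(C_\alpha(1\otimes x^{T}))$ for the corner $C_\alpha=(1\otimes 1_\alpha)C(1\otimes1_\alpha)\in\mathcal N\otimes M_{n_\alpha}$. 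Choi's theorem (in its version with $\mathcal N$ an arbitrary finite-dimensional $C^*$-algebra) then gives that $\Gamma|_{M_{n_\alpha}}$ is CP iff $C_\alpha\ge0$. Since $C\ge0$ iff every $C_\alpha\ge0$ (the $1\otimes1_\alpha$ are central orthogonal projections summing to $1$), and $\Gamma$ is CP iff each restriction to a summand is CP ($\Gamma$ is the sum of its compressions to the central summands, each of which is composed with a CP conditional projection $x\mapsto 1_\alpha x$), the equivalence follows.

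I expect the main obstacle to be exactly this step: tracking the conventions so that the $\op$-leg, the adjoint $b^*$, and the transpose line up and produce the genuine Choi matrix rather than, say, its partial transpose, which would not detect complete positivity. To settle it I will first test on $f=\id$: there $\Psi(\id)=\sum c^{-1}\varepsilon_{ij}\otimes\varepsilon_{ji}^{\op}$ must be a positive multiple of the maximally entangled projection after transposition. That check fixes the orientation, and the matrix-unit expansion from the proof of Proposition~\ref{prop:cptop} supplies the computation. As an alternative route, the identity $\widetilde f=\rho(\Psi_{\M,\N}(f))$ proved there can be read directly: it says $\widetilde f$ is the Stinespring/Choi operator of $\Gamma$ in the standard form on $L^2(\mathcal M)\otimes L^2(\mathcal N)$, and one can cross-check against that.
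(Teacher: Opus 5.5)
Your proposal is correct and follows the paper's proof. Co-unitality is read off directly as trace preservation, and complete positivity is reduced via Proposition~\ref{prop:cptop} to recognising $\Psi_{\M,\N}(f)$ as the Choi element of $\Gamma$; the paper also checks this on rank-one maps, $\widehat a\,\widehat b^{\,*}\mapsto a\otimes (b^*)^{\op}$.

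The only difference is that the paper cites \cite[Theorem~5.36]{Daws24} for the correspondence between completely positive maps $\mathcal M\to\mathcal N$ and positive elements of $\mathcal N\otimes\mathcal M^{\op}$, whereas you reprove it blockwise from Choi's theorem. Your orientation worry resolves favourably: under the $*$-isomorphism $x^{\op}\mapsto x^{T}$ of $M_{n_\alpha}^{\op}$ onto $M_{n_\alpha}$, the corner $C_\alpha$ becomes $c_\alpha^{-1}\sum_{i,j}\Gamma(\varepsilon_{i,j})\otimes\varepsilon_{i,j}$, which is the genuine Choi matrix and not its partial transpose.
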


\begin{proof}
 Define
\( 
\Psi':B(\mathcal M,\mathcal N)\to
\mathcal N\otimes\mathcal M^{\op}
\)
on rank-one maps by
\( 
\Psi'(a \otimes \rho_b)
=
a\otimes b^{\op}.
\)
By \cite[Theorem 5.36]{Daws24}, \(\Psi'\) restricts to a bijection between
completely positive maps \(\mathcal M\to\mathcal N\) and positive elements of
\(\mathcal N\otimes\mathcal M^{\op}\).
By the previous proposition, \(f\) is completely positive  if and only if the associated Choi element
\(\Psi_{\M,\N}(f)\in\mathcal N\otimes\mathcal M^{\op}\) is positive. Under the
identification \(f(\widehat x)=\widehat{\Gamma(x)}\), this is precisely the
usual Choi element of \(\Gamma\), that is, \(\Psi'(\Gamma)=\Psi_{\M,\N}(f)\). Indeed, assume $ \Gamma =a \otimes \rho_b$. Then $ \Psi'(\Gamma) = a \otimes b^{\op}$ while $ f(\widehat x) = \widehat {\Gamma(x)}= \tau_{\mathcal M}(bx)\widehat a= \widehat a (\widehat{b^*})^*(\widehat x)$ so that $ f= \widehat a (\widehat{b^*})^*$ and $ \Psi_{\M,\N}(\widehat a (\widehat{b^*})^*)= a\otimes b ^{\op}$.
Hence \(f\) is completely positive
if and only if \(\Gamma\) is completely positive in the usual
operator-algebraic sense.

It remains to check the counit condition. Since \(f\) is a channel,
\(\eta^*_{\mathcal N}f=\eta^*_{\mathcal M}\). Thus, for every
\(x\in\mathcal M\),
\[
\tau_{\mathcal N}(\Gamma(x))
=
\langle \widehat{1_{\mathcal N}},\widehat{\Gamma(x)}\rangle
=
\langle \widehat{1_{\mathcal N}},f(\widehat x)\rangle
=
\langle \widehat{1_{\mathcal M}},\widehat x\rangle
=
\tau_{\mathcal M}(x).
\]
Therefore \(\Gamma\) is trace-preserving. The converse follows by reversing the arguments.
\end{proof}

Let $ \mathcal M \subseteq\B(H)$ be a von Neumann algebra. Recall that for an element $ a\in \mathcal M_{+}$, its support projection is defined as the smallest projection $p$ such that $a=ap=pa$. We denote this projection by $ s(a)$ and since $\mathcal M$ is a von Neumann algebra $ s(a)\in \mathcal M$. In particular for finite dimensional $\cl M$, $s(a)= P_{\operatorname{Ran} (a)}$.
\begin{lemma}\label{lem:support-Psi-id}
Let \((\mathcal M,\tau_{\mathcal M})\) and
\((\mathcal N,\tau_{\mathcal N})\) be finite-dimensional tracial von Neumann
algebras. Let \(f:L^2(\mathcal M)\to L^2(\mathcal N)\) and let
\(\Gamma_f:\mathcal M\to\mathcal N\) be defined by
\(f(\widehat x)=\widehat{\Gamma_f(x)}\). Then
\[
(\Gamma_f\otimes{\rm id}_{\mathcal M^{\op}})
\bigl(\Psi_{\M}(1_{L^2(\mathcal M)})\bigr)=\Psi_{\M,\N}(f).
\]
Moreover, if \(\mathcal M\subseteq \B(H)\) is any faithful 
representation and \(\Psi_{\M}(1_{L^2(\mathcal M)})\) is represented on
\(H\otimes\overline H\), then
\[
s\bigl(\Psi_{\M}(1_{L^2(\mathcal M)})\bigr)
=\Pi_{\mathcal M}
.
\]
\end{lemma}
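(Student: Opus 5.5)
For the first identity, both sides are linear in \(f\), so I will reduce to rank-one \(f\). I will write \(\Psi_{\M}(1_{L^2(\mathcal M)})\) explicitly. Choose an orthonormal basis \((\widehat{u_k})\) of \(L^2(\mathcal M)\). Then \(1_{L^2(\mathcal M)}=\sum_k \widehat{u_k}\,\widehat{u_k}^{\,*}\), so \(\Psi_{\M}(1)=\sum_k u_k\otimes u_k^*\). Hence
\[
(\Gamma_f\otimes{\rm id})(\Psi_{\M}(1))=\sum_k \Gamma_f(u_k)\otimes u_k^* .
\]
On the other side, \(f=f\cdot 1=\sum_k f(\widehat{u_k})\,\widehat{u_k}^{\,*}=\sum_k \widehat{\Gamma_f(u_k)}\,\widehat{u_k}^{\,*}\). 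Applying \(\Psi_{\M,\N}\) to this expansion gives exactly the same sum. This settles the first identity directly, without using rank-one \(f\) at all.

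For the support statement, I will first identify \(C:=\Psi_{\M}(1)\) intrinsically. Write \(\mathcal M=\bigoplus_\alpha M_{n_\alpha}\) and \(\tau_{\mathcal M}=\sum_\alpha c_\alpha\operatorname{Tr}_{n_\alpha}\). The matrix units scaled by \(c_\alpha^{-1/2}\) form an orthonormal basis. This gives
\[
C=\sum_\alpha c_\alpha^{-1}\sum_{i,j}\varepsilon^{(\alpha)}_{i,j}\otimes(\varepsilon^{(\alpha)}_{i,j})^{*}
=\sum_\alpha c_\alpha^{-1}\sum_{i,j}\varepsilon^{(\alpha)}_{i,j}\otimes(\varepsilon^{(\alpha)}_{j,i})^{\op}.
\]
Each \(\alpha\)-block of \(C\) is \(c_\alpha^{-1}n_\alpha\) times a projection \(e_\alpha\). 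Here \(e_\alpha=n_\alpha^{-1}\sum_{i,j}\varepsilon_{i,j}\otimes\varepsilon_{j,i}^{\op}\) is self-adjoint and idempotent, by the \(\op\)-multiplication rule. Since the blocks are orthogonal, \(C\geq 0\) and \(s(C)=\sum_\alpha e_\alpha\), independently of the trace weights.

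Next I pass to a faithful representation on \(H=\bigoplus_\alpha(\bb C^{n_\alpha}\otimes E_\alpha)\), with \(\mathcal M^{\op}\) acting on \(\overline H\). I will show that the range of \(\sum_\alpha e_\alpha\) is \(\operatorname{vec}(\mathcal M')\). By Lemma~\ref{lem:vec-bimodule-action-general}, \((a\otimes b^{\op})\operatorname{vec}(T)=\operatorname{vec}(aTb)\). Hence \(e_\alpha\) corresponds, under \(\operatorname{vec}\), to the map
\[
T\mapsto n_\alpha^{-1}\sum_{i,j}\varepsilon^{(\alpha)}_{i,j}\,T\,\varepsilon^{(\alpha)}_{j,i}.
\]
This is the normalised trace-like conditional expectation \(E_\alpha\) onto the commutant of \(M_{n_\alpha}\otimes 1\) in the corner \(1_\alpha\B(H)1_\alpha\), namely \(1_{n_\alpha}\otimes\B(E_\alpha)\), composed with compression by \(1_\alpha\). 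So the range of \(\sum_\alpha e_\alpha\) is \(\operatorname{vec}\bigl(\bigoplus_\alpha 1_{n_\alpha}\otimes \B(E_\alpha)\bigr)=\operatorname{vec}(\mathcal M')\), which equals \(\Pi_{\mathcal M}\).

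The main obstacle is this last identification. One must check that \(T\mapsto n^{-1}\sum_{i,j}\varepsilon_{i,j}T\varepsilon_{j,i}\) on \(\B(\bb C^n\otimes E)\) is idempotent with range exactly \(1_n\otimes\B(E)\). It kills off-diagonal blocks between different \(\alpha\) automatically, since \(\varepsilon^{(\alpha)}\) compresses both sides by \(1_\alpha\). I will verify this on elementary tensors \(\varepsilon_{k,l}\otimes S\), which are mapped to \(n^{-1}\delta_{k,l}\,1_n\otimes S\). One also has to be careful with the conjugation conventions for \(\op\) on \(\overline H\) so that \(\varepsilon_{j,i}^{\op}\) indeed implements right multiplication by \(\varepsilon_{j,i}\). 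Lemma~\ref{lem:vec-bimodule-action-general} handles exactly this.
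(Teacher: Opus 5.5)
Your proof is correct. It differs from the paper's in two modest ways.

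For the first identity, the paper reduces to rank-one maps $\Gamma_f=a\otimes\rho_b$ and then needs the expansion $\sum_i\tau_{\mathcal M}(be_i)e_i^*=b$. Your expansion $f=\sum_k\widehat{\Gamma_f(u_k)}\,\widehat{u_k}^{\,*}$ instead produces both sides directly as the same sum $\sum_k\Gamma_f(u_k)\otimes u_k^*$. Your opening remark about reducing to rank-one $f$ is therefore superfluous.

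For the support, the paper argues at the vector level. On each block it identifies $\Psi_{\mathcal M}(1)$ with $\frac{n}{c}\,\Omega_n\Omega_n^*\otimes 1_{E\otimes\overline E}$, and $\operatorname{vec}(1_K\otimes\B(E))$ with $\mathbb C\Omega_n\otimes E\otimes\overline E$, ``up to the ordering of tensor legs''. You instead compute $s(C)=\sum_\alpha e_\alpha$ intrinsically in $\mathcal M\otimes\mathcal M^{\op}$, which makes its independence of the weights $c_\alpha$ and of the representation visible. You then use Lemma~\ref{lem:vec-bimodule-action-general} to recognise $\sum_\alpha e_\alpha$ as the vectorisation of the trace-preserving conditional expectation $T\mapsto\sum_\alpha n_\alpha^{-1}\sum_{i,j}\varepsilon^{(\alpha)}_{i,j}T\varepsilon^{(\alpha)}_{j,i}$ of $\B(H)$ onto $\mathcal M'$.

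Your route avoids any reshuffling of tensor legs and treats the cross blocks $\alpha\neq\beta$ of $H\otimes\overline H$ explicitly, which the paper's blockwise argument leaves implicit. The paper's version, in return, exhibits $\Pi_{\mathcal M}$ concretely as maximally entangled projections tensored with identities. That is the form reused later for $J_X$ and the BKS projections.

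The one step you leave tacit is immediate. The image of $C$ in $\B(H\otimes\overline H)$ is $\sum_\alpha (n_\alpha/c_\alpha)$ times the images of the mutually orthogonal projections $e_\alpha$, so it has the same range as the image of $\sum_\alpha e_\alpha$.
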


\begin{proof}
For the first identity it suffices to check rank-one maps. Let
\(\Gamma_f=a \otimes \rho_b\), where \(\rho_b(x)=\tau_{\mathcal M}(bx)\). Then
\(f=\widehat a(\widehat{b^*})^*\) and \(\Psi_{\M,\N}(f)=a\otimes b^{\op}\). If
\((\widehat e_i)_i\) is an orthonormal basis of \(L^2(\mathcal M)\), then
\(\Psi_{\M}(1_{L^2(\mathcal M)})=\sum_i e_i\otimes(e_i^*)^{\op}\). Hence
\[
(\Gamma_f\otimes\id)(\Psi_{\M}(1_{L^2(\mathcal M)}))
=
\sum_i\tau_{\mathcal M}(be_i)a\otimes(e_i^*)^{\op}
=
a\otimes b^{\op},
\]
using \(\sum_i\tau_{\mathcal M}(be_i)e_i^*=b\). This proves the first claim.

For the support statement, since \(\mathcal M\subseteq \B(H)\) is already
represented faithfully, we may, up to a unitary, write
\[
H=\bigoplus_\alpha (K_\alpha\otimes E_\alpha),
\qquad
\mathcal M=
\bigoplus_\alpha (\B(K_\alpha)\otimes 1_{E_\alpha}),
\]
where \(K_\alpha=\mathbb C^{n_\alpha}\) and the \(E_\alpha\)'s are non-zero
multiplicity spaces.  Thus an element \(x=(x_\alpha)_\alpha\in\mathcal M\)
acts as \(\bigoplus_\alpha(x_\alpha\otimes1_{E_\alpha})\), and
\[
\mathcal M'
=
\bigoplus_\alpha (1_{K_\alpha}\otimes B(E_\alpha)).
\]
Write \(\tau_{\mathcal M}=\sum_\alpha c_\alpha\Tr_{K_\alpha}\), with
\(c_\alpha>0\).  The support computation is blockwise, so fix one summand
\(\B(K)\cong M_n\) with trace \(c\Tr_n\).  For matrix units
\((\varepsilon_{i,j})\),
\[
\Psi_{\M}(1_{L^2(M_n)})
=
c^{-1}\sum_{i,j}\varepsilon_{i,j}\otimes\varepsilon_{j,i}^{\op}.
\]
Represented on \((K\otimes E)\otimes\overline{(K\otimes E)}\), this is, up to
the canonical ordering of tensor legs,
\[
\frac{n}{c}\,\Omega_n\Omega_n^*\otimes1_{E\otimes\overline E},
\qquad
\Omega_n=n^{-1/2}\sum_i e_i\otimes\overline e_i.
\]
Its support is
\(\Omega_n\Omega_n^*\otimes1_{E\otimes\overline E}\).  On the other hand,
\(\operatorname{vec}(1_K\otimes B(E))
=
\mathbb C\Omega_n\otimes E\otimes\overline E\), with the same tensor-leg
ordering.  Hence on each summand
\(s(\Psi_{\M}(1_{L^2(M_n)}))=P_{\operatorname{vec}(M_n')}\).  Taking the direct sum
over \(\alpha\) gives
\( 
s\bigl(\Psi_{\M}(1_{L^2(\mathcal M)})\bigr)
=
\Pi_{\mathcal M}.
\)
\end{proof}

Let \(f,\lambda:L^2(\mathcal X)\otimes L^2(\mathcal Y)\to L^2(\mathcal A)\otimes L^2(\mathcal B)\), where $ \lambda$ is a quantum game and $ f$ is a  correlation. The \emph{winning probability} of \(f\) for the game \(\lambda\) is the scalar
\[
\omega(\lambda,f)
:=
\tau_{\mathcal M}(1)^{-1}
\eta_{\mathcal N}^*
m_{\mathcal N}(f\otimes\lambda)m_{\mathcal M}^*
\eta_{\mathcal M}(1).
\]
A correlation $ f$ is called \emph{perfect}~\cite{Goldberg26}  for the quantum game $ \lambda$ if 
\[
m_{\mathcal A\mathcal B}(f\otimes\lambda)m^*_{\mathcal X\mathcal Y}=f.
\]

\begin{lemma}\label{lem:perfect-losing-states}
Put
\(\mathcal M=\mathcal X\otimes\mathcal Y\),
\(\mathcal N=\mathcal A\otimes\mathcal B\), and
\( 
\lambda^\perp:=\eta_{\mathcal N}\eta_{\mathcal M}^*-\lambda .
\)
Let \(f:L^2(\mathcal M)\to L^2(\mathcal N)\) be a correlation. Then
\( 
0\leq \omega(\lambda,f)\leq 1.
\)
Moreover, the following are equivalent:
\begin{enumerate}
\item[\rm(i)] \(f\) is perfect for \(\lambda\);
\item[\rm(ii)]
\(m_{\mathcal N}(f\otimes\lambda^\perp)m_{\mathcal M}^*=0\);
\item[\rm(iii)] \(\omega(\lambda,f)=1\).
\end{enumerate}
\end{lemma}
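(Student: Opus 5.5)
The plan is to transport everything through $\Psi_{\M,\N}$ using Theorem~\ref{thm:goldberg-daws-transform}. Set $F=\Psi_{\M,\N}(f)$, which is positive by Proposition~\ref{prop:cptop}, and $Q=Q_\lambda=\Psi_{\M,\N}(\lambda)$, which is a projection by Corollary~\ref{cor:goldberg-rule-projection}. Since $\eta_{\mathcal N}\eta_{\mathcal M}^*=\widehat{1}\,\widehat{1}^{\,*}$, we get $\Psi_{\M,\N}(\eta_{\mathcal N}\eta_{\mathcal M}^*)=1\otimes 1$. Hence $\Psi_{\M,\N}(\lambda^\perp)=1-Q=Q^\perp$, again a projection.

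By Theorem~\ref{thm:goldberg-daws-transform}(i) and (iii),
\[
\omega(\lambda,f)=\tau_{\mathcal M}(1)^{-1}(\tau_{\mathcal N}\otimes\tau_{\mathcal M}^{\op})(FQ).
\]
The same computation with $\lambda$ replaced by $\eta_{\mathcal N}\eta_{\mathcal M}^*$ gives
\[
\tau_{\mathcal M}(1)^{-1}(\tau_{\mathcal N}\otimes\tau_{\mathcal M}^{\op})(F)=\tau_{\mathcal M}(1)^{-1}\eta_{\mathcal N}^*f\eta_{\mathcal M}(1)=\tau_{\mathcal M}(1)^{-1}\eta_{\mathcal M}^*\eta_{\mathcal M}(1)=1,
\]
using co-unitality. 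Write $\tau=\tau_{\mathcal N}\otimes\tau_{\mathcal M}^{\op}$, a faithful positive trace. Then $\tau(FQ)=\tau(QFQ)\ge0$ and $\tau(FQ^\perp)=\tau(Q^\perp FQ^\perp)\ge0$. Their sum is $\tau(F)=\tau_{\mathcal M}(1)$, which gives $0\le\omega\le1$.

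For the equivalences, apply $\Psi_{\M,\N}$ and use its injectivity. Condition (i) becomes $FQ=F$, and (ii) becomes $FQ^\perp=0$; these are clearly equivalent since $Q+Q^\perp=1$. Condition (iii) is equivalent to $\tau(Q^\perp FQ^\perp)=0$. By faithfulness of $\tau$ and positivity of $Q^\perp FQ^\perp$, this is $Q^\perp FQ^\perp=0$, i.e.\ $(F^{1/2}Q^\perp)^*(F^{1/2}Q^\perp)=0$, so $F^{1/2}Q^\perp=0$, hence $FQ^\perp=0$. The converse, (ii)$\Rightarrow$(iii), follows directly from the trace formula.

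The main point requiring care is identifying $\Psi_{\M,\N}(\eta_{\mathcal N}\eta_{\mathcal M}^*)=1\otimes1$. This holds because $\eta_{\mathcal N}\eta_{\mathcal M}^*(\widehat x)=\tau_{\mathcal M}(x)\widehat 1=\widehat 1\,\widehat 1^{\,*}(\widehat x)$, and $\Psi_{\M,\N}(\widehat1\,\widehat1^{\,*})=1\otimes1^*$. The remaining checks are that $\tau_{\mathcal M}^{\op}$ is a faithful trace on $\mathcal M^{\op}$, so the product trace is faithful, and that Theorem~\ref{thm:goldberg-daws-transform}(iii) is applied with the correct normalisation.
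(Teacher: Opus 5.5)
Your proposal is correct and follows essentially the same route as the paper. Both transport through $\Psi_{\M,\N}$, split the value into the traces of $Q_\lambda\Psi_{\M,\N}(f)Q_\lambda$ and $Q_\lambda^\perp\Psi_{\M,\N}(f)Q_\lambda^\perp$ summing to $1$ by co-unitality, and use faithfulness together with $F^{1/2}Q_\lambda^\perp=0$ for (iii)$\Rightarrow$(ii). The only cosmetic difference is in (i)$\Leftrightarrow$(ii): you get it on the algebra side from $\Psi_{\M,\N}(f)Q_\lambda+\Psi_{\M,\N}(f)Q_\lambda^\perp=\Psi_{\M,\N}(f)$, whereas the paper checks the equivalent identity $m_{\mathcal N}(f\otimes\eta_{\mathcal N}\eta_{\mathcal M}^*)m_{\mathcal M}^*=f$ directly from the unit and counit identities.
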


\begin{proof}
Let \(Q_\lambda=\Psi_{\M,\N}(\lambda)\). Since
\(\Psi_{\M,\N}(\eta_{\mathcal N}\eta_{\mathcal M}^*)=1_{\mathcal N\otimes\mathcal M^{\op}}\),
we have \(\Psi_{\M,\N}(\lambda^\perp)=Q_\lambda^\perp\). By the trace identity for
\(\Psi_{\M,\N}\) from Theorem \ref{thm:goldberg-daws-transform} and the product formula
\(\Psi_{\M,\N}(m_{\mathcal N}(f\otimes\lambda)m_{\mathcal M}^*)=\Psi_{\M,\N}(f)Q_\lambda\),
\[
\omega(\lambda,f)
=
\tau_{\mathcal M}(1)^{-1}
(\tau_{\mathcal N}\otimes\tau_{\mathcal M^{\op}})
(\Psi_{\M,\N}(f)Q_\lambda).
\]
Since \(f\) is a correlation by Proposition \ref{prop:cptop}, \(\Psi_{\M,\N}(f)\geq0\), and \(Q_\lambda\) is a
projection. Thus, by traciality,
\[
(\tau_{\mathcal N}\otimes\tau_{\mathcal M^{\op}})
(\Psi_{\M,\N}(f)Q_\lambda)
=
(\tau_{\mathcal N}\otimes\tau_{\mathcal M^{\op}})
(Q_\lambda\Psi_{\M,\N}(f)Q_\lambda)\geq0.
\]
The same argument applied to \(Q_\lambda^\perp\) shows that the losing value
\( 
\tau_{\mathcal M}(1)^{-1}
(\tau_{\mathcal N}\otimes\tau_{\mathcal M^{\op}})
(\Psi_{\M,\N}(f)Q_\lambda^\perp)
\)
is also non-negative. Since \(Q_\lambda+Q_\lambda^\perp=1\), the winning and
losing values sum to
\[
\tau_{\mathcal M}(1)^{-1}
(\tau_{\mathcal N}\otimes\tau_{\mathcal M^{\op}})(\Psi_{\M,\N}(f))
=
\tau_{\mathcal M}(1)^{-1}\eta_{\mathcal N}^*f\eta_{\mathcal M}(1)
=1,
\]
where the last equality uses the counitality
\(\eta_{\mathcal N}^*f=\eta_{\mathcal M}^*\). Hence
\(0\leq\omega(\lambda,f)\leq1\).

Next, convolution with the total rule gives back \(f\). Indeed,
\[
m_{\mathcal N}
\bigl(f\otimes(\eta_{\mathcal N}\eta_{\mathcal M}^*)\bigr)
m_{\mathcal M}^*
=
m_{\mathcal N}(1\otimes\eta_{\mathcal N})
(f\otimes1)(1\otimes\eta_{\mathcal M}^*)m_{\mathcal M}^*
=f,
\]
using the unit and counit identities. Since
\(\lambda+\lambda^\perp=\eta_{\mathcal N}\eta_{\mathcal M}^*\), it follows
that
\[
f=
m_{\mathcal N}(f\otimes\lambda)m_{\mathcal M}^*
+
m_{\mathcal N}(f\otimes\lambda^\perp)m_{\mathcal M}^* .
\]
Thus, (i) and (ii) are equivalent. Also, (ii) implies that the losing value is
zero, and since the winning and losing values sum to \(1\), it gives
\(\omega(\lambda,f)=1\).

Conversely, assume \(\omega(\lambda,f)=1\). Then the losing value is zero, so
\( 
(\tau_{\mathcal N}\otimes\tau_{\mathcal M^{\op}})
(\Psi_{\M,\N}(f)Q_\lambda^\perp)=0.
\)
By traciality this is the trace of the positive operator
\(Q_\lambda^\perp\Psi_{\M,\N}(f)Q_\lambda^\perp\). Since the trace is faithful, we get
\(Q_\lambda^\perp\Psi_{\M,\N}(f)Q_\lambda^\perp=0\). Hence
\(\Psi_{\M,\N}(f)Q_\lambda^\perp=0\), because
\[
Q_\lambda^\perp\Psi_{\M,\N}(f)Q_\lambda^\perp
=
(\Psi_{\M,\N}(f)^{1/2}Q_\lambda^\perp)^*
(\Psi_{\M,\N}(f)^{1/2}Q_\lambda^\perp).
\]
Therefore
\[
\Psi_{\M,\N}\!\left(
m_{\mathcal N}(f\otimes\lambda^\perp)m_{\mathcal M}^*
\right)
=
\Psi_{\M,\N}(f)Q_\lambda^\perp
=
0.
\]
Since \(\Psi_{\M,\N}\) is a linear isomorphism, (ii) follows. This proves the
equivalence of (i), (ii), and (iii).
\end{proof}

We use the following elementary fact.
\begin{lemma}
\label{lem:faithful-test-state}
Let $(\mathcal M,\tau_{\mathcal M})$ and
$(\mathcal N,\tau_{\mathcal N})$ be finite-dimensional tracial von
Neumann algebras, let $\Gamma\colon\mathcal M\to\mathcal N$ be
positive, and let $P\in\mathcal P(\mathcal M)$ and
$Q\in\mathcal P(\mathcal N)$. Suppose that
$\rho_0\in\mathcal M_+$ is a normal state with
$s(\rho_0)=P$. Then the following are equivalent:
\begin{enumerate}
\item[(i)] $s(\Gamma(\rho))\leq Q$ for every normal state
$\rho\in\mathcal M_+$ satisfying $s(\rho)\leq P$;
\item[(ii)] $s(\Gamma(\rho_0))\leq Q$.
\end{enumerate}
\end{lemma}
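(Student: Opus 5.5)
(i)$\Rightarrow$(ii) is immediate: take $\rho=\rho_0$, which is admissible since $s(\rho_0)=P\leq P$. For (ii)$\Rightarrow$(i) the plan is to dominate every admissible $\rho$ by a multiple of $\rho_0$ and then use positivity of $\Gamma$ together with the fact that supports of positive elements are monotone under domination. Throughout, states are identified with positive elements of $\mathcal M$ via the trace pairing, as above, so $s(\rho)$ is simply the range projection of $\rho\in\mathcal M_+$.

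The key step is the domination itself. Let $\rho\in\mathcal M_+$ with $s(\rho)\leq P$. Since $\mathcal M$ is finite dimensional, $\rho_0$ restricted to $P H$ (or to $P L^2(\mathcal M)$ in any faithful representation) is invertible on that subspace. Its smallest eigenvalue there, $\delta>0$, gives $\rho_0\geq \delta P$. On the other side, $\rho = P\rho P\leq \|\rho\| P$. Hence
\[
0\leq \rho\leq c\,\rho_0,\qquad c=\|\rho\|/\delta .
\]
Positivity of $\Gamma$ then gives $0\leq\Gamma(\rho)\leq c\,\Gamma(\rho_0)$.

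It remains to show that for positive elements $0\leq a\leq b$ one has $s(a)\leq s(b)$. Setting $q=s(b)$, we get $0\leq q^\perp a q^\perp\leq q^\perp b q^\perp=0$. Thus $a^{1/2}q^\perp=0$, so $aq^\perp=0$ and $a=aq$, which means $s(a)\leq q$. Applying this with $a=\Gamma(\rho)$ and $b=c\,\Gamma(\rho_0)$ yields $s(\Gamma(\rho))\leq s(\Gamma(\rho_0))\leq Q$, which proves (i).

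Complete positivity of $\Gamma$ is not needed here, only positivity, and normality is automatic in finite dimensions. The only point requiring care is the strict lower bound $\rho_0\geq\delta P$, which relies on finite dimensionality, equivalently on $P$ being exactly the support of $\rho_0$. I expect this to be the one step to state cleanly; the rest is routine order manipulation in the $C^*$-algebras $\mathcal M$ and $\mathcal N$.
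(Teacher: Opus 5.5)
Your proposal is correct and follows essentially the same route as the paper: the domination $\rho\leq\|\rho\|P\leq\delta^{-1}\|\rho\|\rho_0$ (from $\rho_0\geq\delta P$), followed by positivity of $\Gamma$. The only cosmetic difference is that the paper compresses directly by $Q^\perp$ to get $Q^\perp\Gamma(\rho)Q^\perp=0$, whereas you first prove monotonicity $s(\Gamma(\rho))\leq s(\Gamma(\rho_0))$; in doing so you also spell out the step from $q^\perp a q^\perp=0$ to $aq^\perp=0$, which the paper leaves implicit.
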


\begin{proof}
Only (ii) $\Rightarrow$ (i) requires proof. Since
$s(\rho_0)=P$ and $\mathcal M$ is finite-dimensional, $\rho_0$ is
invertible in the corner $P\mathcal MP$. Hence
$\rho_0\geq\varepsilon P$ for some $\varepsilon>0$. Let $\rho$ be a normal state with $s(\rho)\leq P$. Then
$\rho\leq\|\rho\|P\leq\varepsilon^{-1}\|\rho\|\rho_0$.
Positivity of $\Gamma$ therefore gives
\[
0\leq
Q^\perp\Gamma(\rho)Q^\perp
\leq
\varepsilon^{-1}\|\rho\|\,
Q^\perp\Gamma(\rho_0)Q^\perp
=0.
\]
Thus $s(\Gamma(\rho))\leq Q$.
\end{proof}

Let
\(\mathcal X\subseteq \B(H_X)\),
\(\mathcal Y\subseteq \B(H_Y)\),
\(\mathcal A\subseteq \B(H_A)\), and
\(\mathcal B\subseteq \B(H_B)\) be faithful finite-dimensional representations.
Set \(\mathcal M=\mathcal X\otimes\mathcal Y\) and
\(\mathcal N=\mathcal A\otimes\mathcal B\). Then
\(\mathcal M\subseteq \B(H_{\rm in})\) and
\(\mathcal N\subseteq \B(H_{\rm out})\), where
\( 
H_{\rm in}=H_X\otimes H_Y\) and
\( H_{\rm out}=H_A\otimes H_B .
\)

Recall that for a rule operator quantum game \(\lambda\), the corresponding $\cl M^{\rm op}$-projection-test quantum game uses the accepting projection 
\(Q_\lambda=\Psi_{\M,\N}(\lambda)\in\mathcal N\otimes\mathcal M^{\op}\) represented on 
\(H_{\rm out}\otimes\overline{H_{\rm in}}\) while the corresponding input projection is
\( 
\Pi_{\mathcal M}=P_{\operatorname{vec}(\mathcal M')}
\subseteq
\mathcal M \otimes \mathcal M^{\op},
\) 
where \(\mathcal M'\) denotes the commutant of \(\mathcal M\) in
\(\B(H_{\rm in})\).

\begin{theorem} \label{thm:goldberg-projection-test}
Let \(f:L^2(\mathcal M)\to L^2(\mathcal N)\) be a channel and \(\Gamma_{f}:\mathcal M\to\mathcal N\) be the quantum channel determined by
\(f(\widehat x)=\widehat{\Gamma_{f}(x)}\). For a Goldberg rule
\(\lambda:L^2(\mathcal M)\to L^2(\mathcal N)\), set
\( 
\rho_{\mathcal M}:= \tau_{\mathcal M}(1_{\mathcal M})^{-1}\Psi_{\M}(1_{L^2(\mathcal M)})
\in \mathcal M\otimes\mathcal M^{\op},
\)
 then
\[
\omega_{G_\lambda}(\Gamma_f,\rho_{\mathcal M})
=
\left\langle
(\Gamma_{f}\otimes{\rm id}_{\mathcal M^{\op}})(\rho_{\mathcal M}),
Q_\lambda
\right\rangle
=
\,\omega(\lambda,f),
\] 
where 
\(G_\lambda\) is the corresponding projection-test quantum game.

 Moreover, \(f\) is perfect for \(\lambda\) if and only if
\(\Gamma_f\) is perfect for the projection-test \(G_\lambda\).
\end{theorem}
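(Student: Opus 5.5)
The value identity and the perfectness equivalence are handled separately. For the value identity, the plan is to combine Lemma~\ref{lem:support-Psi-id} with Theorem~\ref{thm:goldberg-daws-transform}(iii). By the first part of Lemma~\ref{lem:support-Psi-id},
\[
(\Gamma_f\otimes{\rm id}_{\mathcal M^{\op}})(\rho_{\mathcal M})=\tau_{\mathcal M}(1)^{-1}\Psi_{\M,\N}(f)\in\mathcal N\otimes\mathcal M^{\op}.
\]
The pairing $\langle\cdot,Q_\lambda\rangle$ is the trace pairing with respect to $\tau_{\mathcal N}\otimes\tau_{\mathcal M^{\op}}$, since states are identified with densities in the tracial picture. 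So the left-hand side equals
\[
\tau_{\mathcal M}(1)^{-1}(\tau_{\mathcal N}\otimes\tau_{\mathcal M^{\op}})\bigl(\Psi_{\M,\N}(f)Q_\lambda\bigr).
\]
By Theorem~\ref{thm:goldberg-daws-transform}(i) this is $\tau_{\mathcal M}(1)^{-1}(\tau_{\mathcal N}\otimes\tau_{\mathcal M^{\op}})\bigl(\Psi_{\M,\N}(m_{\mathcal N}(f\otimes\lambda)m_{\mathcal M}^*)\bigr)$, and by (iii) it equals $\omega(\lambda,f)$. This is exactly the computation already appearing in the proof of Lemma~\ref{lem:perfect-losing-states}. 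Before using it, I will check that $\rho_{\mathcal M}$ is a normal state: it is positive by Proposition~\ref{prop:cptop} applied to the identity channel, and it has trace one by (iii) with $T=1$.

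For perfectness, the plan runs through a chain of equivalences. By Lemma~\ref{lem:perfect-losing-states}, $f$ is perfect iff $\Psi_{\M,\N}(f)Q_\lambda^\perp=0$, equivalently iff $Q_\lambda^\perp\Psi_{\M,\N}(f)Q_\lambda^\perp=0$. Since $\Psi_{\M,\N}(f)\ge0$, this says $s\bigl((\Gamma_f\otimes\id)(\rho_{\mathcal M})\bigr)\le Q_\lambda$. The second part of Lemma~\ref{lem:support-Psi-id} gives $s(\rho_{\mathcal M})=\Pi_{\mathcal M}$ in the concrete representation on $H_{\rm in}\otimes\overline{H_{\rm in}}$. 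Lemma~\ref{lem:faithful-test-state} is applied to the positive map $\Gamma_f\otimes\id_{\mathcal M^{\op}}$ on $\mathcal M\otimes\mathcal M^{\op}$ (positive since $\Gamma_f$ is completely positive by Proposition~\ref{prop:L2-channel-algebra-channel}), with $P=\Pi_{\mathcal M}$, $Q=Q_\lambda$ and test state $\rho_0=\rho_{\mathcal M}$. It shows this support condition holds iff every normal state supported in $\Pi_{\mathcal M}$ is mapped to one supported in $Q_\lambda$. That is the definition of perfectness for $G_\lambda=(\Pi_{\mathcal M},Q_\lambda)$.

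The main obstacle is consistency between the tracial/density picture and the concrete spatial picture. Supports, states and the pairing are computed in $\mathcal N\otimes\mathcal M^{\op}$ with the tensor trace, while $G_\lambda$ and $\Pi_{\mathcal M}$ live in $\B(H_{\rm out}\otimes\overline{H_{\rm in}})$. Support projections are intrinsic to the algebra and are preserved by faithful representations, so $s(\cdot)\le Q_\lambda$ means the same in both pictures. Likewise, "state supported in $P$" is unchanged under the density identification $h\leftrightarrow\tau(\cdot\,h)$, because $s(\rho_h)=s(h)$. I also need $\Gamma_f\otimes\id$ on densities to agree with the predual channel $\Gamma\otimes\id_{\mathcal R_*}$ used in defining $\omega_{G}$. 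This holds because the trace on $\mathcal M\otimes\mathcal M^{\op}$ is a product trace and $\Gamma_f$ is trace-preserving, so the density-picture tensor map is the predual of $\Gamma^*\otimes\id$. I would verify this on elementary tensors and conclude by linearity.
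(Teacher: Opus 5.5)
Your proposal is correct and follows the paper's proof essentially step for step. For the value identity you use Lemma~\ref{lem:support-Psi-id} together with Theorem~\ref{thm:goldberg-daws-transform}(i),(iii). For perfectness you use Lemma~\ref{lem:perfect-losing-states}, positivity of $\Psi_{\M,\N}(f)$, and Lemma~\ref{lem:faithful-test-state} with test state $\rho_{\mathcal M}$, whose support is $\Pi_{\mathcal M}$.

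The differences are cosmetic. You phrase the intermediate condition as $s(\Psi_{\M,\N}(f))\le Q_\lambda$ rather than the paper's faithful-trace condition $\langle\Psi_{\M,\N}(f),Q_\lambda^\perp\rangle=0$. You also spell out the density-versus-spatial bookkeeping that the paper leaves implicit.
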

\begin{proof}
First observe that $\rho_{\mathcal M}$ is indeed a normal state
supported in $\Pi_{\mathcal M}$. 
Since $1_{L^2(\mathcal M)}$ is completely positive,
Proposition~4.6 gives $\Psi_{\M}(1_{L^2(\mathcal M)})\geq 0$, while Lemma~4.8
gives
\( 
 s(\Psi_{\M}(1_{L^2(\mathcal M)}))=\Pi_{\mathcal M}.
\)
Furthermore, Theorem~4.3(iii) yields
\begin{align*}
 &(\tau_{\mathcal M}\otimes
   \tau_{\mathcal M^{\mathrm{op}}})(\rho_{\mathcal M})  \\
 &\qquad =
 \tau_{\mathcal M}(1_{\mathcal M})^{-1}
 (\tau_{\mathcal M}\otimes
  \tau_{\mathcal M^{\mathrm{op}}})
 \bigl(\Psi_{\M}(1_{L^2(\mathcal M)})\bigr)                 \\
 &\qquad =
 \tau_{\mathcal M}(1_{\mathcal M})^{-1}
 \eta_{\mathcal M}^*
 1_{L^2(\mathcal M)}
 \eta_{\mathcal M}(1)                                  \\
 &\qquad =
 \tau_{\mathcal M}(1_{\mathcal M})^{-1}
 \langle\widehat{1_{\mathcal M}},
        \widehat{1_{\mathcal M}}\rangle
 =1.
\end{align*}
Thus $\rho_{\mathcal M}$ is a normal state and
$s(\rho_{\mathcal M})=\Pi_{\mathcal M}$. Then, we get
\[
\begin{aligned}
\left\langle
(\Gamma_f\otimes\id)(\rho_{\mathcal M}),Q_\lambda
\right\rangle
&=
\tau_{\mathcal M}(1)^{-1}
\left\langle \Psi_{\M,\N}(f),Q_\lambda\right\rangle  \\
&=
\tau_{\mathcal M}(1)^{-1}
(\tau_{\mathcal N}\otimes\tau_{\mathcal M^{\op}})
\bigl(\Psi_{\M,\N}(f)Q_\lambda\bigr)  \\
&=
\tau_{\mathcal M}(1)^{-1}
(\tau_{\mathcal N}\otimes\tau_{\mathcal M^{\op}})
\bigl(\Psi_{\M,\N}(m_{\mathcal N}(f\otimes\lambda)m_{\mathcal M}^*)\bigr)  \\
&=
\tau_{\mathcal M}(1)^{-1}
\eta_{\mathcal N}^*(m_{\mathcal N}(f\otimes\lambda)m_{\mathcal M}^* )\eta_{\mathcal M}(1)
\\
&=
\omega(\lambda,f).
\end{aligned}
\]
Here we used  Lemma \ref{lem:support-Psi-id}  and  Theorem \ref{thm:goldberg-daws-transform} (i),(iii).
This proves the value identity.

It remains to compare perfectness. Put
\(\lambda^\perp=\eta_{\mathcal N}\eta_{\mathcal M}^*-\lambda\). Then
\(\Psi_{\M,\N}(\lambda^\perp)=Q_\lambda^\perp\).  By Lemma \ref{lem:perfect-losing-states},
\(f\) is perfect for \(\lambda\) if and only if \(m_{\mathcal N}(f\otimes\lambda^{\perp})m_{\mathcal M}^*=0\).
Applying \(\Psi_{\M,\N}\), this is equivalent to
\( 
\Psi_{\M,\N}(f)Q_\lambda^\perp=0.
\)
As \(f\) is a channel, Proposition \ref{prop:cptop} says that \(\Psi_{\M,\N}(f)\geq0\). Thus, by traciality and faithfulness
of \(\tau_{\mathcal N}\otimes\tau_{\mathcal M^{\op}}\), we have that 
\(
\Psi_{\M,\N}(f)Q_\lambda^\perp=0\) is equivalent to 
\( 
\left\langle\Psi_{\M,\N}(f),Q_\lambda^\perp\right\rangle=0.
\)
On the other hand, by Lemma \ref{lem:support-Psi-id},
\( 
\Pi_{\mathcal M}
=
s\bigl(\Psi_{\M}(1_{L^2(\mathcal M)})\bigr).
\)
 Applying Lemma~\ref{lem:faithful-test-state} to
$\Gamma_f\otimes\operatorname{id}_{\mathcal M^{\mathrm{op}}}$,
with the normal state 
\( 
\rho_{\mathcal M}
\),
shows that $\Gamma_f$ is perfect for $G_\lambda$ if and only if
\[
\left\langle
(\Gamma_f\otimes\operatorname{id}_{\mathcal M^{\mathrm{op}}})
(\rho_{\mathcal M}),Q_\lambda^\perp
\right\rangle=0.
\]
This is equivalent
to
\[
\left\langle
(\Gamma_f\otimes\operatorname{id}_{\mathcal M^{\mathrm{op}}})
\bigl(\Psi_{\M}(1_{L^2(\mathcal M)})\bigr),
Q_\lambda^\perp
\right\rangle=0.
\]
By Lemma~\ref{lem:support-Psi-id}, the latter is exactly
\( 
\left\langle\Psi_{\M,\N}(f),Q_\lambda^\perp\right\rangle=0.
\)
Thus \(f\) is perfect for \(\lambda\) if and only if \(\Gamma_f\) is perfect
for the projection-test \(G_\lambda\).
\end{proof}

\section{Quantum versions of synchronicity} \label{sec:synchronicity}

In the classical setting, synchronicity \cite{PSSTW16} is a compatibility condition between the two
players' answers on equal questions.  A non-local game
\(G=(X,X,A,A,\lambda)\) is called \emph{synchronous} if
\(\lambda(a,b,x,x)=0\) whenever \(a\neq b\).  Thus, if both players receive
the same question, the rules force them to give the same answer in any winning
play.  Consequently, every perfect correlation \(p\) for a synchronous game is
itself synchronous, in the sense that \(p(a,b|x,x)=0\) for all \(x\in X\) and
all \(a\neq b\).

We first recall the notion of concurrency from the projection-lattice formalism
of \cite{BHTT23}.  Let \(H_X=\mathbb C^X\) and \(H_A=\mathbb C^A\), and set
\(J_X=\Omega_X\Omega_X^*\) and \(J_A=\Omega_A\Omega_A^*\), where
\(\Omega_X=|X|^{-1/2}\sum_{x\in X}e_x\otimes\overline e_x\) and
\(\Omega_A=|A|^{-1/2}\sum_{a\in A}e_a\otimes\overline e_a\).  A quantum game
\( 
\varphi:\mathcal P(M_X\otimes M_X^{\op})
\rightarrow
\mathcal P(M_A\otimes M_A^{\op})
\)
is called \emph{concurrent} if \(\varphi(J_X)=J_A\).  Thus concurrency means
that the rule sends the quantum diagonal of the input system, represented by the
maximally entangled projection \(J_X\), to the corresponding quantum diagonal of
the output system.

The same definition extends naturally to arbitrary 
finite-dimensional von Neumann algebras.  Let
\(\mathcal X\subseteq \B(H_X)\) and \(\mathcal A\subseteq \B(H_A)\), and write
\(\Pi_{\mathcal X}=P_{\operatorname{vec}(\mathcal X')}\) and
\(\Pi_{\mathcal A}=P_{\operatorname{vec}(\mathcal A')}\), where the vectorisation
is taken in \(H_X\otimes\overline{H_X}\) and \(H_A\otimes\overline{H_A}\),
respectively.  We call a projection-lattice rule
\[
\varphi:\mathcal P(\mathcal X\otimes\mathcal X^{\op})
\rightarrow
\mathcal P(\mathcal A\otimes\mathcal A^{\op})
\]
\emph{concurrent} if \(\varphi(\Pi_{\mathcal X})=\Pi_{\mathcal A}\).  When
\(\mathcal X=M_X\) and \(\mathcal A=M_A\) are represented canonically, one has
\(\Pi_{\mathcal X}=J_X\) and \(\Pi_{\mathcal A}=J_A\), so this recovers the
definition above.

Assume that $\mathcal X$ and $ \mathcal A$ are equipped with faithful traces  and let
\( 
\Gamma: \mathcal X \otimes \mathcal X^{\op}
\rightarrow
\mathcal A \otimes \mathcal A^{\op}
\)
be a quantum channel.  We call  $\Gamma$ \emph{concurrent} if it is perfect for the concurrent part of the rule, that is, 
\(\langle\Gamma(\rho),\Pi_{\mathcal A}^{\perp}\rangle=0\) for every normal state
\(\rho\) with \(s(\rho)\leq\Pi_{\mathcal X}\). Equivalently,  \( 
s(\Gamma(\rho))\leq\Pi_{\mathcal A}
\)
for every normal state $\rho$ satisfying
$s(\rho)\leq\Pi_{\mathcal X}$. 
\begin{proposition} \label{prop:concurrency-equivalences}
Let $\Gamma\colon\mathcal X\otimes\mathcal X^{\mathrm{op}}
\to\mathcal A\otimes\mathcal A^{\mathrm{op}}$ be a quantum channel,
and let $\Gamma(\rho)=\sum_iT_i\rho T_i^*$ be a Kraus decomposition.
The following are equivalent:
\begin{enumerate}
\item[(i)] $\Gamma$ is concurrent;
\item[(ii)] $s(\Gamma(\Pi_{\mathcal X}))\leq\Pi_{\mathcal A}$;
\item[(iii)] $\Pi_{\mathcal A}^{\perp}T_i\Pi_{\mathcal X}=0$
for every $i$.
\end{enumerate}
\end{proposition}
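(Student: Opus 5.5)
The plan is to prove (i)$\Rightarrow$(ii)$\Rightarrow$(iii)$\Rightarrow$(i). Throughout, $\Gamma$ is regarded as a completely positive map on operators, $\Gamma(\rho)=\sum_iT_i\rho T_i^*$ with $T_i\in\B(H_X\otimes\overline{H_X},H_A\otimes\overline{H_A})$. This is the Kraus form fixed in Subsection~\ref{subs:strategies_coop}, with the density picture identified with the predual through the chosen traces. I will also use the normalised projection $\rho_0:=\Pi_{\mathcal X}/\tau(\Pi_{\mathcal X})$, where $\tau=\tau_{\mathcal X}\otimes\tau_{\mathcal X}^{\op}$ and $\Pi_{\mathcal X}\in\mathcal X\otimes\mathcal X^{\op}$ by Theorem~\ref{thm:abstract-projection-test}.

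\emph{(i)$\Rightarrow$(ii).} The state $\rho_0$ satisfies $s(\rho_0)=\Pi_{\mathcal X}$. Concurrency therefore gives $s(\Gamma(\rho_0))\le\Pi_{\mathcal A}$. Since $\Gamma(\Pi_{\mathcal X})$ is a positive multiple of $\Gamma(\rho_0)$, the two have the same support, which proves (ii).

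\emph{(ii)$\Rightarrow$(iii).} By (ii), $\Pi_{\mathcal A}^\perp\Gamma(\Pi_{\mathcal X})\Pi_{\mathcal A}^\perp=0$. Expanding this expression gives
\[
\sum_i(\Pi_{\mathcal A}^\perp T_i\Pi_{\mathcal X})(\Pi_{\mathcal A}^\perp T_i\Pi_{\mathcal X})^*=0 .
\]
A vanishing sum of positive operators has each term zero, so $\Pi_{\mathcal A}^\perp T_i\Pi_{\mathcal X}=0$ for every $i$.

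\emph{(iii)$\Rightarrow$(i).} Let $\rho$ be a state with $s(\rho)\le\Pi_{\mathcal X}$. Then $\rho=\Pi_{\mathcal X}\rho\Pi_{\mathcal X}$, and hence
\[
\Pi_{\mathcal A}^\perp\Gamma(\rho)\Pi_{\mathcal A}^\perp=\sum_i(\Pi_{\mathcal A}^\perp T_i\Pi_{\mathcal X})\rho(\Pi_{\mathcal A}^\perp T_i\Pi_{\mathcal X})^*=0 .
\]
This gives $s(\Gamma(\rho))\le\Pi_{\mathcal A}$, which is concurrency. Alternatively, this step can be obtained directly from Lemma~\ref{lem:faithful-test-state} applied with $\rho_0$.

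\emph{Main obstacle.} The delicate point is bookkeeping between the predual and density pictures. A state $\rho$ on $\mathcal X\otimes\mathcal X^{\op}$ with $s(\rho)\le\Pi_{\mathcal X}$ corresponds to a density whose support, in the concrete representation on $H_X\otimes\overline{H_X}$, lies under $\Pi_{\mathcal X}$. The Kraus operators must act on that representation. I would handle this by working entirely in the concrete representation with the paper's abuse of notation $\Gamma(\rho)=\sum_iT_i\rho T_i^*$. There, supports of positive elements of the algebra coincide with range projections, so all the steps above are spatial statements about operators.
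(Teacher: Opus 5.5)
Your proof is correct and follows essentially the paper's route. You use the normalised state $\rho_0=\Pi_{\mathcal X}/\tau(\Pi_{\mathcal X})$ to pass from (i) to (ii), and the positive-sum expansion of $\Pi_{\mathcal A}^\perp\Gamma(\Pi_{\mathcal X})\Pi_{\mathcal A}^\perp$ for (ii)$\Rightarrow$(iii); the only difference is that you close the cycle with a direct Kraus computation for (iii)$\Rightarrow$(i), whereas the paper uses Lemma~\ref{lem:faithful-test-state} to get (ii)$\Rightarrow$(i).
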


\begin{proof}
Let
$d=(\tau_{\mathcal X}\otimes\tau_{\mathcal X^{\mathrm{op}}})
(\Pi_{\mathcal X})$ and set
$\rho_0=d^{-1}\Pi_{\mathcal X}$. Then
$\rho_0$ is a normal state with
$s(\rho_0)=\Pi_{\mathcal X}$. By
Lemma~\ref{lem:faithful-test-state}, $\Gamma$ is concurrent if and
only if
$s(\Gamma(\rho_0))\leq\Pi_{\mathcal A}$. Since
$\Gamma(\rho_0)=d^{-1}\Gamma(\Pi_{\mathcal X})$, this is equivalent
to
$s(\Gamma(\Pi_{\mathcal X}))\leq\Pi_{\mathcal A}$. Thus
(i) and (ii) are equivalent.

Finally,
\[
\Pi_{\mathcal A}^{\perp}\Gamma(\Pi_{\mathcal X})
\Pi_{\mathcal A}^{\perp}
=
\sum_i
(\Pi_{\mathcal A}^{\perp}T_i\Pi_{\mathcal X})
(\Pi_{\mathcal A}^{\perp}T_i\Pi_{\mathcal X})^*.
\]
Since $\Gamma(\Pi_{\mathcal X})$ is positive, (ii) holds if
and only if the left-hand side vanishes. Since every summand on the
right is positive, this is equivalent to
$\Pi_{\mathcal A}^{\perp}T_i\Pi_{\mathcal X}=0$ for every $i$.
Thus (ii) and (iii) are equivalent.
\end{proof}

We next compare concurrency with Goldberg's notion of synchronicity. A linear map
\(f:L^2(\mathcal X\otimes\mathcal X^{\op})\to
L^2(\mathcal A\otimes\mathcal A^{\op})\) is called \emph{synchronous}
\cite{Goldberg26} if
\[
m_{\mathcal A}^*m_{\mathcal A}fm_{\mathcal X}^*m_{\mathcal X}
=
fm_{\mathcal X}^*m_{\mathcal X}.
\]

\begin{proposition}
\label{prop:Goldberg-concurrency}
Let
\( 
f\colon
L^2(\mathcal X)\otimes L^{2}(\mathcal X^{\mathrm{op}})
\longrightarrow
L^2(\mathcal A) \otimes L^2(\mathcal A^{\mathrm{op}})
\)
be a channel, and let $\Gamma_f$ be its associated quantum channel.
If $f$ is synchronous, then $\Gamma_f$ is concurrent.

If moreover $\mathcal X$ and $\mathcal A$ are equipped with the special
Frobenius normalisation, the converse also holds.
\end{proposition}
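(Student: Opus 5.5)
The plan is to translate Goldberg's synchronicity equation into a support condition on $\Gamma_f$ through the identification $L^2(\mathcal X)\otimes L^2(\mathcal X^{\op})\cong L^2(\mathcal X\otimes\mathcal X^{\op})$, $\widehat x\mapsto \Gamma_f$-input $x$. The key object is the positive operator $E_{\mathcal X}:=m_{\mathcal X}^*m_{\mathcal X}$ on $L^2(\mathcal X\otimes\mathcal X^{\op})$, where $L^2(\mathcal X^{\op})$ is identified with $L^2(\mathcal X)$ as a Hilbert space.

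\textbf{Step 1: the range of $E_{\mathcal X}$.} Since $m_{\mathcal X}$ is surjective, $\operatorname{Ran}E_{\mathcal X}=\operatorname{Ran}m_{\mathcal X}^*$. I will identify this range with the left ideal
\[
I_{\mathcal X}:=(\mathcal X\otimes\mathcal X^{\op})\,\Pi_{\mathcal X}\subseteq L^2(\mathcal X\otimes\mathcal X^{\op}).
\]
By the Frobenius identity, $m_{\mathcal X}^*(\widehat z)=(z\otimes 1)\cdot m_{\mathcal X}^*\eta_{\mathcal X}(1)$. Moreover, $m_{\mathcal X}^*\eta_{\mathcal X}(1)$ corresponds to the element $\Psi_{\mathcal X}(1_{L^2(\mathcal X)})=\sum_i e_i\otimes (e_i^*)^{\op}$, whose support is $\Pi_{\mathcal X}$ by Lemma~\ref{lem:support-Psi-id}. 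Hence $\operatorname{Ran}m_{\mathcal X}^*\subseteq I_{\mathcal X}$.

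For equality I will count dimensions blockwise. On the summand $M_{n_\alpha}$, the projection $\Pi_{\mathcal X}$ is the rank-one projection onto $\Omega_{n_\alpha}$, so it is minimal in $M_{n_\alpha}\otimes M_{n_\alpha}^{\op}$. The off-diagonal blocks contribute nothing. Thus $\dim I_{\mathcal X}=\sum_\alpha n_\alpha^2=\dim\mathcal X$, which equals $\dim\operatorname{Ran}m_{\mathcal X}^*$ because $m^*_{\mathcal X}$ is injective. The same holds for $\mathcal A$.

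\textbf{Step 2: synchronous implies concurrent.} Synchronicity says that $E_{\mathcal A}$ fixes every vector of $f(\operatorname{Ran}E_{\mathcal X})=f(I_{\mathcal X})$. A fixed vector of $E_{\mathcal A}$ lies in $\operatorname{Ran}E_{\mathcal A}=I_{\mathcal A}$, so in algebra language $\Gamma_f(I_{\mathcal X})\subseteq I_{\mathcal A}$.

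Apply this to $\Pi_{\mathcal X}\in I_{\mathcal X}$. This gives $\Gamma_f(\Pi_{\mathcal X})=\Gamma_f(\Pi_{\mathcal X})\Pi_{\mathcal A}$. Since $\Gamma_f(\Pi_{\mathcal X})\ge 0$, taking adjoints yields $\Gamma_f(\Pi_{\mathcal X})=\Pi_{\mathcal A}\Gamma_f(\Pi_{\mathcal X})\Pi_{\mathcal A}$, so $s(\Gamma_f(\Pi_{\mathcal X}))\le\Pi_{\mathcal A}$. Concurrency then follows from Proposition~\ref{prop:concurrency-equivalences}(ii).

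\textbf{Step 3: the converse in the special case.} Under the special normalisation, $m m^*=1$, so $E_{\mathcal X}$ and $E_{\mathcal A}$ are the orthogonal projections onto $I_{\mathcal X}$ and $I_{\mathcal A}$. Synchronicity is then equivalent to $f(I_{\mathcal X})\subseteq I_{\mathcal A}$, that is, $\Gamma_f(h\Pi_{\mathcal X})\Pi_{\mathcal A}^\perp=0$ for all $h$.

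Take a Kraus decomposition $\Gamma_f(\rho)=\sum_iT_i\rho T_i^*$ in the density picture, in concrete faithful representations. By Proposition~\ref{prop:concurrency-equivalences}(iii), concurrency gives $\Pi_{\mathcal A}^\perp T_i\Pi_{\mathcal X}=0$. Then
\[
\Gamma_f(h\Pi_{\mathcal X})\Pi_{\mathcal A}^\perp=\sum_iT_ih\,(\Pi_{\mathcal A}^\perp T_i\Pi_{\mathcal X})^*=0,
\]
which is the required inclusion.

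\textbf{Main obstacle.} I expect the delicate part to be Step 1. This means verifying carefully that the identification of $L^2(\mathcal X)\otimes L^2(\mathcal X^{\op})$ with $L^2(\mathcal X\otimes\mathcal X^{\op})$ carries $m_{\mathcal X}^*\eta_{\mathcal X}(1)$ to $\Psi_{\mathcal X}(1)$, with the correct $\op$ convention and trace scalings. It also means checking that Kraus operators in the concrete representation are compatible with the trace-density picture of $\Gamma_f$; the support computation of Lemma~\ref{lem:support-Psi-id} is what I would lean on for both points.
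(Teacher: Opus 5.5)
Your proposal is correct and follows essentially the paper's argument. In both, synchronicity becomes the statement that $\Gamma_f$ maps $(\mathcal X\otimes\mathcal X^{\op})\Pi_{\mathcal X}$ into $(\mathcal A\otimes\mathcal A^{\op})\Pi_{\mathcal A}$, using $s(\Psi_{\mathcal X}(1_{L^2(\mathcal X)}))=\Pi_{\mathcal X}$, and the converse is the same Kraus computation once the special normalisation makes $m^*m$ a projection. The only variation is how you describe $\operatorname{Ran}(m_{\mathcal X}^*m_{\mathcal X})$: you use the Frobenius identity plus a dimension count, whereas the paper checks on matrix units that $m_{\mathcal X}^*m_{\mathcal X}$ is right multiplication by $h_{\mathcal X}=\Psi_{\mathcal X}(1_{L^2(\mathcal X)})$, which gives the range $(\mathcal X\otimes\mathcal X^{\op})h_{\mathcal X}=(\mathcal X\otimes\mathcal X^{\op})\Pi_{\mathcal X}$ directly.
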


\begin{proof}
Put
$\mathcal M=\mathcal X\otimes\mathcal X^{\mathrm{op}}$ and
$\mathcal N=\mathcal A\otimes\mathcal A^{\mathrm{op}}$, and set
$h_{\mathcal X}=\Psi_{\mathcal X}
(1_{L^2(\mathcal X)})$, with $h_{\mathcal A}$ defined similarly.
The operator
$m_{\mathcal X}^*m_{\mathcal X}$ is right multiplication by
$h_{\mathcal X}$. Indeed, it suffices to check this blockwise. If
$\mathcal X=M_n$ and $\tau_{\mathcal X}=c\operatorname{Tr}$, then
\( 
h_{\mathcal X}
=
c^{-1}\sum_{i,j=1}^n
\varepsilon_{i,j}\otimes\varepsilon_{j,i}^{\mathrm{op}},
\)
and
\[
m_{\mathcal X}^*m_{\mathcal X}
\bigl(
\widehat{\varepsilon_{a,b}}\otimes
\widehat{\varepsilon_{c,d}^{\mathrm{op}}}
\bigr)
=
\delta_{b,c}c^{-1}\sum_{j=1}^n
\widehat{\varepsilon_{a,j}}\otimes
\widehat{\varepsilon_{j,d}^{\mathrm{op}}}.
\]
On the other hand,
\begin{align*}
(\varepsilon_{a,b}\otimes\varepsilon_{c,d}^{\mathrm{op}})
h_{\mathcal X}
&=
c^{-1}\sum_{i,j}
\varepsilon_{a,b}\varepsilon_{i,j}\otimes
\varepsilon_{c,d}^{\mathrm{op}}
\varepsilon_{j,i}^{\mathrm{op}}\\
&=
\delta_{b,c}c^{-1}\sum_j
\varepsilon_{a,j}\otimes\varepsilon_{j,d}^{\mathrm{op}}.
\end{align*}
Thus $m_{\mathcal X}^*m_{\mathcal X}(z)=zh_{\mathcal X}$ for
$z\in\mathcal M$, and similarly
$m_{\mathcal A}^*m_{\mathcal A}(w)=wh_{\mathcal A}$ for
$w\in\mathcal N$. 

Now let $z\in\mathcal M$. We have
\begin{align*}
 &m_{\mathcal A}^*m_{\mathcal A}
 f m_{\mathcal X}^*m_{\mathcal X}(\widehat z)\\
 &\qquad =
 m_{\mathcal A}^*m_{\mathcal A}
 f(\widehat{zh_{\mathcal X}})\\
 &\qquad =
 m_{\mathcal A}^*m_{\mathcal A}
 \bigl(\widehat{\Gamma_f(zh_{\mathcal X})}\bigr)\\
 &\qquad =
 \widehat{\Gamma_f(zh_{\mathcal X})h_{\mathcal A}},
\end{align*}
whereas
\( 
 f m_{\mathcal X}^*m_{\mathcal X}(\widehat z)
 =
 f(\widehat{zh_{\mathcal X}})
 =
 \widehat{\Gamma_f(zh_{\mathcal X})}.
\)
Consequently, the  synchronicity condition of $ f$
is equivalent to
\[
 \Gamma_f(zh_{\mathcal X})h_{\mathcal A}
 =
 \Gamma_f(zh_{\mathcal X}),
 \qquad
 z\in\mathcal X\otimes\mathcal X^{\mathrm{op}}.
\]

By Lemma~\ref{lem:support-Psi-id},
$s(h_{\mathcal X})=\Pi_{\mathcal X}$ and
$s(h_{\mathcal A})=\Pi_{\mathcal A}$. 
Since $s(h_{\mathcal X})=\Pi_{\mathcal X}$ and $ \cl M$ is finite dimensional,
$h_{\mathcal X}$ is invertible in the corner
$\Pi_{\mathcal X}\mathcal M\Pi_{\mathcal X}$ and one has
$\mathcal Mh_{\mathcal X}=\mathcal M\Pi_{\mathcal X}$.
Multiplying
$\Gamma_f(zh_{\mathcal X})h_{\mathcal A}
=\Gamma_f(zh_{\mathcal X})$
on the right by $\Pi_{\mathcal A}^{\perp}$ gives
$\Gamma_f(zh_{\mathcal X})\Pi_{\mathcal A}^{\perp}=0$, since
$h_{\mathcal A}\Pi_{\mathcal A}^{\perp}=0$. Hence
$\Gamma_f(y)\Pi_{\mathcal A}^{\perp}=0$ for every
$y\in\mathcal M\Pi_{\mathcal X}$. In particular, if
$s(\rho)\leq\Pi_{\mathcal X}$, then
$s(\Gamma_f(\rho))\leq\Pi_{\mathcal A}$. Thus $\Gamma_f$ is
concurrent.

Under the special Frobenius normalisation,
$h_{\mathcal X}=\Pi_{\mathcal X}$ and
$h_{\mathcal A}=\Pi_{\mathcal A}$. Conversely, suppose that
$\Gamma_f$ is concurrent. By
Proposition~\ref{prop:concurrency-equivalences},
\( 
\Pi_{\mathcal A}^{\perp}T_i\Pi_{\mathcal X}=0\) for every \(i\).
Taking adjoints gives
$\Pi_{\mathcal X}T_i^*\Pi_{\mathcal A}^{\perp}=0$. Hence, for
every $z\in\mathcal M$,
\[
\Gamma_f(z\Pi_{\mathcal X})\Pi_{\mathcal A}^{\perp}
=
\sum_iT_i z\Pi_{\mathcal X}T_i^*
\Pi_{\mathcal A}^{\perp}
=0.
\]
Therefore
$\Gamma_f(z\Pi_{\mathcal X})\Pi_{\mathcal A}
=\Gamma_f(z\Pi_{\mathcal X})$ for every $z\in\mathcal M$.
Since $h_{\mathcal X}=\Pi_{\mathcal X}$ and
$h_{\mathcal A}=\Pi_{\mathcal A}$, this is precisely Goldberg
synchronicity.
\end{proof}

Finally, we compare the preceding notions with the synchronicity condition of
Bochniak--Kasprzak--So{\l}tan \cite{BKS23}.  Let \((\mathcal X,\tau_{\cl X})\) and
\((\mathcal A, \tau_{\cl A})\) again be tracial von Neumann algebras. 
Write first their abstract decompositions as
\[
\mathcal X \cong\bigoplus_{\ell=1}^{r_{\mathcal X}}M_{m_\ell}, 
\qquad
\mathcal A\cong\bigoplus_{k=1}^{r_{\mathcal A}}M_{n_k}.
\]
In this abstract form the BKS diagonal projections are, up to the fixed transpose
convention on the opposite leg,
\[
\Pi_{\mathcal X}^{\rm BKS}
=
\sum_{\ell=1}^{r_{\mathcal X}}\frac1{m_\ell}
\sum_{s,t=1}^{m_\ell} f^\ell_{s,t}\otimes(f^\ell_{t,s})^{\op},
\qquad
\Pi_{\mathcal A}^{\rm BKS}
=
\sum_{k=1}^{r_{\mathcal A}}\frac1{n_k}
\sum_{i,j=1}^{n_k} e^k_{i,j}\otimes(e^k_{j,i})^{\op}.
\]
Equivalently, in the multiplicity-free realisation of the \(\ell\)-th summand,
if
\(\Omega_\ell^{\mathcal X}=m_\ell^{-1/2}\sum_s e_s^\ell\otimes
\overline{e_s^\ell}\), then
\(\Pi_{\mathcal X}^{\rm BKS}=\sum_\ell \Omega_\ell^{\mathcal X}(\Omega_\ell^{\mathcal X})^*\), and
similarly for \(\mathcal A\).

Let \(\omega_{\mathcal X}\) be the canonical diagonal state on
\(\mathcal X\otimes\mathcal X^{\op}\), given by
\[
\omega_{\mathcal X}(z)
=
\frac1{r_{\mathcal X}}
\sum_{\ell=1}^{r_{\mathcal X}}
\langle\Omega_\ell^{\mathcal X},z\Omega_\ell^{\mathcal X}\rangle .
\]
 If
\(\Gamma:\mathcal X\otimes\mathcal X^{\op}
\to \mathcal A\otimes\mathcal A^{\op}\) is a quantum channel,
with unital completely positive adjoint \(\Gamma^*\), then $\Gamma$ is called \emph{BKS-synchronous}~\cite[Proposition 6.3]{BKS23} if
\[
\omega_{\mathcal X}
\bigl(\Gamma^*(\Pi_{\mathcal A}^{\rm BKS})\bigr)=1.
\]

Now choose concrete faithful representations with multiplicities. Up to unitary equivalence, we may write
\[
H_{\mathcal X}=
\bigoplus_{\ell=1}^{r_{\mathcal X}}\mathbb C^{m_\ell}\otimes\mathbb C^{d_\ell},
\qquad
\mathcal X=
\bigoplus_{\ell=1}^{r_{\mathcal X}}M_{m_\ell}\otimes 1_{d_\ell},
\]
and similarly
\(H_{\mathcal A} =
\bigoplus_k\mathbb C^{n_k}\otimes\mathbb C^{c_k}\) and
\(\mathcal A=\bigoplus_k M_{n_k}\otimes 1_{c_k}\). Then
\(\mathcal X'=\bigoplus_\ell 1_{m_\ell}\otimes M_{d_\ell}\) and
\(\mathcal A'=\bigoplus_k 1_{n_k}\otimes M_{c_k}\). Under the 
action on \(H_{\mathcal X}\otimes\overline{H_{\mathcal X}}\), the abstract BKS
projection \(\Pi_{\mathcal X}^{\mathrm{BKS}}\) is represented by
\[
\sum_{\ell=1}^{r_{\mathcal X}}
\Omega_{m_\ell}\Omega_{m_\ell}^*\otimes
1_{\mathbb C^{d_\ell}\otimes\overline{\mathbb C^{d_\ell}}}
=
P_{\operatorname{vec}(\mathcal X')}= \Pi_{\cl X},
\]
and analogously \(\Pi_{\mathcal A}^{\mathrm{BKS}}\) is represented by 
\(\Pi_{\cl A}\).

\begin{proposition} \label{prop:BKS-concurrency}
Let
\(
\mathcal X\otimes\mathcal X^{\mathrm{op}}
\subseteq
\mathcal B(H_{\mathcal X}\otimes\overline{H_{\mathcal X}})
\)
and
\(
\mathcal A\otimes\mathcal A^{\mathrm{op}}
\subseteq
\mathcal B(H_{\mathcal A}\otimes\overline{H_{\mathcal A}})
\)
be the concrete representations described above. Let
\( 
\Gamma\colon
\mathcal X\otimes\mathcal X^{\mathrm{op}}
\rightarrow
\mathcal A\otimes\mathcal A^{\mathrm{op}}
\)
be a quantum channel. Then the following are equivalent:
\begin{enumerate}
\item[(i)] $\Gamma$ is BKS-synchronous;
\item[(ii)] $\Gamma$ is concurrent.
\end{enumerate}
\end{proposition}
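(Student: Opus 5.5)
Both conditions will be reduced to one statement about a single test state supported on \(\Pi_{\mathcal X}\). The state \(\omega_{\mathcal X}\) is the canonical diagonal state, and the abstract BKS projections are represented concretely by \(\Pi_{\mathcal X}\) and \(\Pi_{\mathcal A}\), as recorded just before the statement. The plan is therefore:
\begin{enumerate}
\item[(a)] identify \(\omega_{\mathcal X}\) with a normal state whose support is exactly \(\Pi_{\mathcal X}\);
\item[(b)] rewrite BKS-synchronicity as the vanishing of the losing value of \(\Gamma\) on that state;
\item[(c)] invoke Lemma~\ref{lem:faithful-test-state} (equivalently Proposition~\ref{prop:concurrency-equivalences}) to pass to all states supported in \(\Pi_{\mathcal X}\).
\end{enumerate}

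For step (a), use the concrete representation \(H_{\mathcal X}=\bigoplus_\ell \mathbb C^{m_\ell}\otimes\mathbb C^{d_\ell}\) to compute \(\omega_{\mathcal X}\) on \(z\in\mathcal X\otimes\mathcal X^{\op}\). Elements of \(\mathcal X\otimes\mathcal X^{\op}\) act as \(z\otimes 1\) on the multiplicity legs \(\mathbb C^{d_\ell}\otimes\overline{\mathbb C^{d_\ell}}\), after reordering tensor legs as in Lemma~\ref{lem:support-Psi-id}. Hence \(\omega_{\mathcal X}\) is implemented by a density that is a positive combination of the blocks \(\Omega_{m_\ell}\Omega_{m_\ell}^*\otimes 1_{\mathbb C^{d_\ell}\otimes\overline{\mathbb C^{d_\ell}}}\). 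This holds whether one uses the trace \(\tau_{\mathcal X}\otimes\tau_{\mathcal X^{\op}}\) or the concrete trace, since both are block-scalar multiples of each other. Every coefficient \(1/r_{\mathcal X}\) is strictly positive, so the support of this density is \(\sum_\ell \Omega_{m_\ell}\Omega_{m_\ell}^*\otimes 1=\Pi_{\mathcal X}\). Call the resulting normal state \(\rho_0\); thus \(s(\rho_0)=\Pi_{\mathcal X}\). The one subtlety is that \(\omega_{\mathcal X}\) involves cross terms \(\Omega_\ell\) against \(\Omega_{\ell'}\); these do not occur because the functional is a sum of vector states over different central blocks.

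For step (b), write \(\Pi_{\mathcal A}^{\rm BKS}=\Pi_{\mathcal A}\) in the concrete picture and dualise:
\[
\omega_{\mathcal X}\bigl(\Gamma^*(\Pi_{\mathcal A})\bigr)=\langle\Gamma(\rho_0),\Pi_{\mathcal A}\rangle .
\]
Since \(\Gamma^*\) is unital and \(\rho_0\) is a state, this quantity equals \(1-\langle\Gamma(\rho_0),\Pi_{\mathcal A}^\perp\rangle\). BKS-synchronicity is therefore equivalent to \(\langle\Gamma(\rho_0),\Pi_{\mathcal A}^{\perp}\rangle=0\). Because \(\Gamma(\rho_0)\geq0\), this is in turn equivalent to \(\Pi_{\mathcal A}^\perp\Gamma(\rho_0)\Pi_{\mathcal A}^\perp=0\), that is, \(s(\Gamma(\rho_0))\leq\Pi_{\mathcal A}\).

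For step (c), apply Lemma~\ref{lem:faithful-test-state} with \(P=\Pi_{\mathcal X}\) and \(Q=\Pi_{\mathcal A}\). This shows that \(s(\Gamma(\rho_0))\leq\Pi_{\mathcal A}\) holds if and only if \(s(\Gamma(\rho))\leq\Pi_{\mathcal A}\) for every normal state \(\rho\) with \(s(\rho)\leq\Pi_{\mathcal X}\), which is the definition of concurrency. The main obstacle is step (a), namely keeping the identification of \(\omega_{\mathcal X}\) with a state of full support \(\Pi_{\mathcal X}\) honest in the presence of multiplicities and the transpose convention on the opposite leg. If this becomes delicate, the fallback is Proposition~\ref{prop:concurrency-equivalences}(iii). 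One checks that \(\langle\Gamma(\rho_0),\Pi_{\mathcal A}^\perp\rangle=0\) is equivalent to \(\Pi_{\mathcal A}^\perp T_i\,s(\rho_0)=0\) for all \(i\) directly, using \(\rho_0\geq\varepsilon\, s(\rho_0)\) exactly as in the proof of Lemma~\ref{lem:faithful-test-state}.
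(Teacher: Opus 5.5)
Your proposal is correct and follows essentially the same route as the paper: transport the BKS diagonal state to a density supported exactly on \(\Pi_{\mathcal X}\), dualise the BKS condition to \(\langle\Gamma(\rho_0),\Pi_{\mathcal A}\rangle=1\), equivalently \(s(\Gamma(\rho_0))\leq\Pi_{\mathcal A}\), and then pass to all states supported in \(\Pi_{\mathcal X}\) by Lemma~\ref{lem:faithful-test-state}. Your blockwise computation in step (a) spells out what the paper asserts ``by the preceding identification''. There the block coefficients are really \(1/(r_{\mathcal X}d_\ell^2)\) or \(1/(r_{\mathcal X}c_\ell^2)\), depending on the trace, rather than \(1/r_{\mathcal X}\), but they are positive either way, so the support is unaffected.
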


\begin{proof}
Let $\tilde\rho_{\omega_{\mathcal X}}$ be the density of the
transported BKS diagonal state. By the preceding identification,
$s(\tilde\rho_{\omega_{\mathcal X}})=\Pi_{\mathcal X}$. By trace duality, the BKS condition is equivalent to
$\langle\Gamma(\tilde\rho_{\omega_{\mathcal X}}),
\Pi_{\mathcal A}\rangle=1$. Since
$\Gamma(\tilde\rho_{\omega_{\mathcal X}})$ is a state, this is
equivalent to
$s(\Gamma(\tilde\rho_{\omega_{\mathcal X}}))
\leq\Pi_{\mathcal A}$. By
Lemma~\ref{lem:faithful-test-state} and Proposition \ref{prop:concurrency-equivalences}, this is equivalent to
concurrency.
\end{proof}


\section{Examples} \label{sec:examples}

Before we move on to examples, we single out some classes of quantum non-local games that fall under the projection-lattice setting and one example that does not. Recall that throughout, $H_{\rm in}, H_{\rm out}$ are assumed finite dimensional while $ H_{R}$ not necessarily.

\begin{proposition} \label{prop:rank-ones-refl}
Let \(G=(\psi,\gamma\gamma^*)\) be a rank-one projection-test game, with states
\(\psi\in H_{\rm in}\otimes H_R\) and
\(\gamma\in H_{\rm out}\otimes H_R\). Then its winning transformation space
\[
\mathcal U_{\psi,\gamma}
=
\{T\in \B(H_{\rm in},H_{\rm out}):
(\gamma\gamma^*)^\perp(T\otimes 1_{H_R})\psi=0\}
\]
is reflexive. Consequently, \(G\) determines a projection-lattice game whose
winning transformation space is \(\mathcal U_{\psi,\gamma}\).
\end{proposition}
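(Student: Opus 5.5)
The plan is to reduce the rank-one test to a condition of the form "\(TD\) is a multiple of a fixed operator", and then verify reflexivity directly from the definition of \(\operatorname{Ref}\).

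\textbf{Step 1: reduce to an operator condition.} Take a Schmidt decomposition \(\psi=\sum_{i=1}^r s_i\,\xi_i\otimes\zeta_i\) and let \(K=\operatorname{span}\{\zeta_i\}\subseteq H_R\), which is finite dimensional. Since \((T\otimes 1)\psi=\sum_i s_i\,T\xi_i\otimes\zeta_i\) lies in \(H_{\rm out}\otimes K\), the condition \((\gamma\gamma^*)^\perp(T\otimes1)\psi=0\) says \((T\otimes 1)\psi=c\gamma\) for some scalar \(c\).

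\textbf{Step 2: describe \(\mathcal U_{\psi,\gamma}\) explicitly.} There are two cases.
\begin{itemize}
\item If \(\gamma\notin H_{\rm out}\otimes K\), then necessarily \(c=0\).
\item Otherwise write \(\gamma=\sum_i \gamma_i\otimes\zeta_i\) with \(\gamma_i\in H_{\rm out}\).
\end{itemize}
Let \(R=\operatorname{span}\{\xi_i\}\subseteq H_{\rm in}\). Define \(G_0:R\to H_{\rm out}\) by \(G_0\xi_i=s_i^{-1}\gamma_i\) in the second case, and \(G_0=0\) in the first. Comparing coefficients of the orthonormal family \((\zeta_i)\) gives
\[
\mathcal U_{\psi,\gamma}=\{T:\ T|_R\in\mathbb C\,G_0\}=\mathbb C\,T_0+\{T:\ T|_R=0\},
\]
where \(T_0\) extends \(G_0\) by \(0\) on \(R^\perp\).

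\textbf{Step 3: compute the local spans \([\mathcal U\xi]\).} Write \(\mathcal U=\mathcal U_{\psi,\gamma}\).
\begin{itemize}
\item If \(\xi\notin R\), then \(\xi\) has a nonzero component \(\xi'\in R^\perp\). The operators \(\eta\,\xi'^*\) belong to \(\mathcal U\), so \([\mathcal U\xi]=H_{\rm out}\).
\item If \(\xi\in R\), then \([\mathcal U\xi]=\mathbb C\,G_0\xi\).
\end{itemize}
Hence \(S\in\operatorname{Ref}\mathcal U\) exactly when \(S\xi=c(\xi)G_0\xi\) for every \(\xi\in R\). The goal is then to show \(S|_R=cG_0\) for a single constant \(c\); after that, \(S-cT_0\) vanishes on \(R\), so \(S\in\mathcal U\).

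\textbf{Step 4 (main obstacle): every one-dimensional space \(\mathbb C G_0\) is reflexive.} I would split by the rank of \(G_0\).
\begin{itemize}
\item If \(G_0=0\), then \(S|_R=0\) and there is nothing to prove.
\item If \(G_0=ab^*\) has rank one, then \(S\) maps \(R\) into \(\mathbb C a\) and kills \(b^\perp\cap R\). So \(S|_R=a b'^*\) with \(b'\perp(b^\perp\cap R)\), which forces \(b'\in\mathbb C b\).
\item If \(\operatorname{rank}G_0\ge2\), take \(\xi,\xi'\in R\) with \(G_0\xi,G_0\xi'\) linearly independent. Expanding \(S(\xi+\xi')=c(\xi+\xi')G_0(\xi+\xi')\) gives \(c(\xi)=c(\xi+\xi')=c(\xi')\). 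For \(\xi''\) with \(G_0\xi''\) dependent on both, or in \(\ker G_0\), compare it with a suitable independent vector, e.g. \(\xi''+\xi\), to propagate the same constant. Thus \(c\) is constant on \(R\setminus\ker G_0\), and \(S\) vanishes on \(\ker G_0\) because there \(S\xi=c(\xi)G_0\xi=0\).
\end{itemize}
In all cases \(S|_R=cG_0\), which proves reflexivity. This bookkeeping for the rank \(\ge2\) case is the only delicate part.

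\textbf{Step 5: the consequence.} With \(\mathcal M=\B(H_{\rm in})\) and \(\mathcal N=\B(H_{\rm out})\), every subspace is an \(\mathcal N'\)-\(\mathcal M'\) bimodule. Proposition~\ref{prop:bimo_lattice} then gives the projection-lattice game \(\varphi_{\mathcal U}\) with \(\mathcal U_{\varphi_{\mathcal U}}=\mathcal U_{\psi,\gamma}\).
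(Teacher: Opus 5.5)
Your proposal is correct and follows essentially the paper's route. The paper reaches the same description $\mathcal U_{\psi,\gamma}=\{T: T|_L\in\mathbb C\,C\}$ with $L=\operatorname{ran}A_\psi$, using the Hilbert--Schmidt operators $A_\psi, A_\gamma$ instead of a Schmidt decomposition; its case split on whether $A_\gamma$ vanishes on $\ker A_\psi$ is exactly your split on whether $\gamma\in H_{\rm out}\otimes K$. It then proves that $\mathbb C\,C$ is reflexive. The only difference is in your Step 4: the paper notes $\ker C\subseteq\ker B$ and $\operatorname{ran}B\subseteq\operatorname{ran}C$, so that $\widetilde C^{-1}\widetilde B$ on $L/\ker C$ leaves every line invariant and is therefore scalar, which handles all cases at once and avoids your split by the rank of $G_0$.
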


\begin{proof}
Identify $\psi$ and $\gamma$ with Hilbert--Schmidt operators
$A_\psi\colon\overline{H_R}\to H_{\rm in}$ and
$A_\gamma\colon\overline{H_R}\to H_{\rm out}$, respectively. Then
$(T\otimes1_{H_R})\psi=\operatorname{vec}(TA_\psi)$, and hence
$T\in\mathcal U_{\psi,\gamma}$ if and only if
$TA_\psi\in\mathbb C A_\gamma$. Put $L=\operatorname{ran}A_\psi$. 

Suppose first that $A_\gamma$ does not vanish on $\ker A_\psi$. If
$TA_\psi=\alpha A_\gamma$, evaluation on $\ker A_\psi$ forces
$\alpha=0$. Therefore
\( 
\mathcal U_{\psi,\gamma}
=\{T\in\mathcal B(H_{\rm in},H_{\rm out}):T|_L=0\},
\)
which is reflexive.
Now suppose that $A_\gamma$ vanishes on $\ker A_\psi$. There is then a
well-defined operator $C\colon L\to H_{\rm out}$ satisfying
$CA_\psi=A_\gamma$, and
\( 
\mathcal U_{\psi,\gamma}
=\{T\in\mathcal B(H_{\rm in},H_{\rm out}):T|_L\in\mathbb C C\}.
\)
We use the elementary fact that the one-dimensional operator space
$\mathbb C C$ is reflexive. Indeed, suppose that
$B\colon L\to H_{\rm out}$ satisfies $Bx\in\mathbb C Cx$ for every
$x\in L$. Then $\ker C\subseteq\ker B$ and
$\operatorname{ran}B\subseteq\operatorname{ran}C$. Thus $B$ and $C$
induce maps
\( 
\widetilde B\colon L/\ker C\to\operatorname{ran}C\) and 
\( 
\widetilde C\colon L/\ker C\to\operatorname{ran}C,
\)
with $\widetilde C$ invertible. The operator
$\widetilde C^{-1}\widetilde B$ leaves every one-dimensional subspace of
$L/\ker C$ invariant and is therefore a scalar multiple of the
identity. Hence $B\in\mathbb C C$.

Finally, let $S\in\operatorname{Ref}(\mathcal U_{\psi,\gamma})$. For
every $x\in L$, we have
$Sx\in[\mathcal U_{\psi,\gamma}x]=\mathbb C Cx$. By the preceding
observation, $S|_L\in\mathbb C C$, and therefore
$S\in\mathcal U_{\psi,\gamma}$. Thus
$\mathcal U_{\psi,\gamma}$ is reflexive. The final assertion follows
from the correspondence between reflexive winning transformation spaces
and projection-lattice games.
\end{proof}

\begin{proposition}\label{prop:one-sided-masa-modules-reflexive}
Let \(H_{\rm in}\) and \(H_{\rm out}\) be finite-dimensional Hilbert spaces, and
let \(\mathcal U\subseteq \B(H_{\rm in},H_{\rm out})\) be an operator space. If
\(\mathcal U\) is a left masa module (resp. right masa module), then
\(\mathcal U\) is reflexive.
\end{proposition}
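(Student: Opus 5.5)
Suppose first that \(\mathcal U\) is a left masa module, i.e.\ \(\mathcal D\,\mathcal U\subseteq\mathcal U\) for a maximal abelian selfadjoint algebra \(\mathcal D\subseteq \B(H_{\rm out})\). In finite dimensions \(\mathcal D\) is the diagonal algebra with respect to an orthonormal basis \((f_k)_{k=1}^n\) of \(H_{\rm out}\), with minimal projections \(p_k=f_kf_k^*\). The plan is to decompose \(\mathcal U\) row by row. Since \(p_k\in\mathcal D\), each \(T\in\mathcal U\) satisfies \(T=\sum_k p_kT\) with \(p_kT\in\mathcal U\). Hence
\[
\mathcal U=\bigoplus_k p_k\mathcal U,\qquad p_k\mathcal U=\{f_k\eta^*:\eta\in V_k\},
\]
where \(V_k=\{\eta\in H_{\rm in}: f_k\eta^*\in\mathcal U\}\) is a subspace. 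This step is routine.

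Next, I would show that \(\mathcal U\) equals
\[
\{T: p_kT P_{V_k^\perp}=0 \text{ for all } k\}
=\{T:\langle f_k,T\xi\rangle=0\ \text{for all } k \text{ and all } \xi\in V_k^\perp\}.
\]
The inclusion ``\(\subseteq\)'' is clear. For the reverse inclusion, note that \(p_kT=f_k(T^*f_k)^*\), and the condition gives \(T^*f_k\in V_k\), so \(p_kT\in\mathcal U\). Summing over \(k\) gives \(T\in\mathcal U\).

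With this description, reflexivity is immediate. Let \(S\in\operatorname{Ref}\mathcal U\) and \(\xi\in V_k^\perp\). Every \(T\in\mathcal U\) satisfies \(\langle f_k,T\xi\rangle=0\), so \([\mathcal U\xi]\perp f_k\), and therefore \(\langle f_k,S\xi\rangle=0\). Thus \(S\in\mathcal U\). Equivalently, \(\mathcal U\) is cut out by the rank-one functionals \(T\mapsto\langle f_k,T\xi\rangle\), so Larson's criterion cited in Proposition~\ref{prop:hypergraph-reflexive} applies directly.

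For the right masa module case, where \(\mathcal U\mathcal D\subseteq\mathcal U\) with \(\mathcal D\subseteq\B(H_{\rm in})\) a masa, I would pass to adjoints. The space \(\mathcal U^*\subseteq\B(H_{\rm out},H_{\rm in})\) is a left \(\mathcal D\)-module, and reflexivity is preserved under taking adjoints. To check the latter, observe that reflexivity is equivalent to \(\mathcal U\) being the annihilator of the rank-one operators in its preannihilator, and this set of rank-one operators is mapped to itself, up to conjugation, under adjoints. Alternatively, I would repeat the argument directly, using columns \(\mathcal Up_k=\{\xi e_k^*:\xi\in W_k\}\) and the defining conditions \(\langle\zeta, Te_k\rangle=0\) for \(\zeta\in W_k^\perp\).

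The only point needing care is the reverse inclusion in the row description, namely that the constraints recover exactly \(p_k\mathcal U\). This reduces to the identity \(p_kT=f_k(T^*f_k)^*\), so I do not expect a real obstacle.
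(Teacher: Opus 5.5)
Your proof is correct and follows the paper's approach: the same row-by-row splitting \(\mathcal U=\bigoplus_k p_k\mathcal U\), with your \(V_k\) playing the role of the paper's row spaces \(F_k=\{f_k^*T:T\in\mathcal U\}\), and the right-module case handled by columns or adjoints. The only difference is the final step. The paper computes \(\mathcal U_\perp=\bigoplus_k F_k^\perp f_k^*\) and invokes Larson's rank-one preannihilator criterion, whereas you check reflexivity directly from the rank-one constraints \(\langle f_k,T\xi\rangle=0\) for \(\xi\in V_k^\perp\), which makes your argument self-contained.
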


\begin{proof}
We prove the left module case.  Choose orthonormal bases and identify
\(H_{\rm in}=\mathbb C^n\), \(H_{\rm out}=\mathbb C^m\).  Let
\(\mathcal D_m\subseteq M_m\) be the diagonal masa and assume
\(\mathcal D_m\mathcal U\subseteq\mathcal U\).  If \(\varepsilon_{i,i}\) denotes the
\(i\)-th diagonal matrix unit, then \(\varepsilon_{i,i}\mathcal U\subseteq\mathcal U\),
and hence every \(T\in\mathcal U\) decomposes row by row as
\(T=\sum_i \varepsilon_{i,i}T\).  Thus
\[
\mathcal U=\bigoplus_{i=1}^m \varepsilon_{i,i}\mathcal U .
\]

For each \(i\), set
\( 
F_i=\{e_i^*T:T\in\mathcal U\}\subseteq(\mathbb C^n)^* .
\)
Equivalently, \(F_i\) is the space of possible \(i\)-th rows of operators in
\(\mathcal U\).  Since \(\varepsilon_{i,i}\mathcal U\subseteq\mathcal U\), we have
\(\varepsilon_{i,i}\mathcal U=e_iF_i\), where \(e_if:x\mapsto f(x)e_i\), $ x \in \bb C^n$.  Therefore
\( 
\mathcal U=\bigoplus_{i=1}^m e_iF_i .
\)

We now compute its preannihilator under the trace pairing between
\(\B(\mathbb C^n,\mathbb C^m)\) and \(\B(\mathbb C^m,\mathbb C^n)\).  If
\(S\in \B(\mathbb C^m,\mathbb C^n)\), then for \(T=\sum_i e_if_i\in\mathcal U\),
\( 
\operatorname{Tr}(TS)=\sum_{i=1}^m f_i(Se_i).
\)
Hence \(S\in\mathcal U_\perp\) if and only if \(Se_i\in F_i^\perp\) for every
\(i\).  Thus
\( 
\mathcal U_\perp=\bigoplus_{i=1}^m F_i^\perp e_i^* .
\)
In particular, every \(S\in\mathcal U_\perp\) decomposes as
\( 
S=\sum_{i=1}^m (Se_i)e_i^*,
\)
and each summand belongs to \(\mathcal U_\perp\) and has rank at most one.
Thus \(\mathcal U_\perp\) is spanned by its rank-one elements.  By Larson's 
rank-one preannihilator criterion \cite{Lar82} (see also
\cite[Corollary 9.3]{Erdos86}), \(\mathcal U\) is reflexive.

The right module case is obtained similarly by decomposing column by column.
\end{proof}

\begin{corollary} \label{cor:q2c_c2q}
Every quantum-to-classical (resp. classical-to-quantum) projection-test quantum game  determines a projection-lattice
game with the same winning transformation space.
\end{corollary}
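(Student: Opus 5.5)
The plan is to combine Theorem~\ref{thm:abstract-projection-test}, Proposition~\ref{prop:one-sided-masa-modules-reflexive} and Proposition~\ref{prop:bimo_lattice}.

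First, let $G=(P,Q)$ be a quantum-to-classical $\cl R$-projection-test quantum game, so that $\cl A=D_A$ and $\cl B=D_B$. Then $\cl N=D_A\otimes D_B=D_{A\times B}$ is a masa in $\B(H_{\rm out})=\B(\bb C^{A}\otimes\bb C^B)$, and hence $\cl N'=\cl N$. By the first half of the proof of Theorem~\ref{thm:abstract-projection-test}, $\mathcal U_{P,Q}$ is an $\cl N'$-$\cl M'$ bimodule, where $\cl M=\cl X\otimes\cl Y$. In particular it is a left $D_{A\times B}$-module, that is, a left masa module. Proposition~\ref{prop:one-sided-masa-modules-reflexive} then shows that $\mathcal U_{P,Q}$ is reflexive.

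Second, $\mathcal U_{P,Q}$ is a reflexive $\cl N'$-$\cl M'$ bimodule, so the converse part of Proposition~\ref{prop:bimo_lattice} applies. The support map $\varphi:=\varphi_{\mathcal U_{P,Q}}:\mathcal P(\cl X\otimes\cl Y)\to\mathcal P(\cl A\otimes\cl B)$ is zero-preserving and join-continuous, and it satisfies $\mathcal U_\varphi=\mathcal U_{P,Q}$. This $\varphi$ is the required projection-lattice game. By the discussion of strategies, a channel $\Gamma$ is perfect for $\varphi$ if and only if $\mathcal X_\Gamma\subseteq\mathcal U_\varphi=\mathcal U_{P,Q}$, which in turn holds if and only if $\Gamma$ is perfect for $G$.

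The classical-to-quantum case is symmetric. Here $\cl M=D_{X\times Y}$ is a masa on $H_{\rm in}$, so $\cl M'=\cl M$. Therefore $\mathcal U_{P,Q}$ is a right masa module, and we use the right-module version of Proposition~\ref{prop:one-sided-masa-modules-reflexive}.

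The only point needing care is that the diagonal algebras act on $\bb C^{A}\otimes\bb C^{B}$ (resp. $\bb C^X\otimes\bb C^Y$) in their standard representation, so that they really are masas. This is the convention for $D_A$, $D_B$ in the definition of quantum-to-classical games. If non-standard multiplicities were allowed, $\cl N'$ would be larger than $\cl N$ but would still contain $\cl N\cong D_{A\times B}$. In that case the module would still be a module over a commutative algebra, though not necessarily a masa, and the masa argument would need to be replaced by a direct preannihilator computation along the minimal central projections. I expect this to be the main obstacle, and it disappears with the standard representation.
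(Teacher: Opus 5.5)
Your proof is correct and is the argument the paper intends; the corollary is stated without proof right after Proposition~\ref{prop:one-sided-masa-modules-reflexive}. As you do, the bimodule half of Theorem~\ref{thm:abstract-projection-test} makes $\mathcal U_{P,Q}$ a one-sided masa module, Proposition~\ref{prop:one-sided-masa-modules-reflexive} gives reflexivity, and Proposition~\ref{prop:bimo_lattice} supplies $\varphi_{\mathcal U_{P,Q}}$ with $\mathcal U_{\varphi_{\mathcal U_{P,Q}}}=\mathcal U_{P,Q}$. Your closing worry about multiplicities is unnecessary: any masa $\cl D$ of $\B(H_{\rm out})$ containing the abelian algebra $\cl N$ satisfies $\cl D\subseteq\cl D'\subseteq\cl N'$, so $\mathcal U_{P,Q}$ remains a left masa module in every faithful representation and no separate preannihilator computation is needed.
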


We now exhibit an example of a projection-test quantum game whose winning transformation space is not reflexive.

\begin{example} \label{ex:not_lattice} \rm
Let \(H=\mathbb C^2\), put \(H_{\rm in}=H_{\rm out}=H\), and let
\(H_R=\overline H\).  Fix an orthonormal basis \(e_1,e_2\) of \(H\), and set
\( 
\Omega
=
\frac{1}{\sqrt 2}
\bigl(e_1\otimes \overline e_1+e_2\otimes \overline e_2\bigr)
\in H\otimes \overline H .
\)
Consider the projection-test game \(G=(\Omega,Q)\), where
\( 
Q
=
1_{H\otimes \overline H}-\Omega\Omega^* .
\)
Then
\[
\mathcal U_{\Omega,Q}
=
\{T\in \B(H):Q^\perp(T\otimes 1_{\overline H})\Omega=0\}
=
\{T\in M_2:\operatorname{Tr}(T)=0\}.
\]
Indeed, \(Q^\perp=\Omega\Omega^*\), and
\( 
\left\langle \Omega,(T\otimes 1_{\overline H})\Omega\right\rangle
=
\frac{1}{2}\operatorname{Tr}(T).
\)
Thus the winning condition is exactly \(\operatorname{Tr}(T)=0\).

We claim that \(\mathcal U_{\Omega,Q}\) is not reflexive.  Put $\mathcal U=\mathcal U_{\Omega, Q}$. 
By Larson's criterion \cite{Lar82}, \(\mathcal U\) is reflexive only if
\(\mathcal U_\perp\) is spanned by its rank-one elements.
In the present example,
\(
\mathcal U=\{T\in M_2:\operatorname{Tr}(T)=0\}
\) 
and its preannihilator is \(\mathcal U_\perp=\mathbb C I_2\).  Since \(I_2\) has rank two,
\(\mathcal U_\perp\) contains no non-zero rank-one operators. Hence
\(\mathcal U_\perp\) is not spanned by rank-one elements and therefore
\(\mathcal U\) is not reflexive.
\end{example}

\subsection{Coherent state exchange game}
We now describe perhaps the first example of a quantum input/output non-local game; namely, the coherent state exchange game of 
Leung--Toner--Watrous  \cite{LTW13} and explain how it fits the settings examined here.  In our notation, the input
spaces are \(H_X=H_Y=\mathbb C^3\), the output spaces are
\(H_A=H_B=\mathbb C^2\), and the referee space is \(H_R=\mathbb C^2\). Thus
\(H_{\rm in}=H_X\otimes H_Y\) and
\(H_{\rm out}=H_A\otimes H_B\).  Put
\( 
\phi=\frac{1}{\sqrt2}(e_1^X\otimes e_1^Y+e_2^X\otimes e_2^Y)
\)
and define the input vector, in  \(H_{\rm in}\otimes H_R\), by
\[
\psi_{\rm coh}
=
\frac{1}{\sqrt2}(e_0^X\otimes e_0^Y)\otimes e_0^R
+
\frac{1}{\sqrt2}\phi\otimes e_1^R .
\]
The referee sends the \(H_X\)-register to Alice and the \(H_Y\)-register to
Bob. The players return the qubit registers \(H_A\) and \(H_B\). The accepting
vector in  \(H_{\rm out}\otimes H_R\), is
\[
\gamma
=
\frac{1}{\sqrt2}(e_0^A\otimes e_0^B)\otimes e_0^R
+
\frac{1}{\sqrt2}(e_1^A\otimes e_1^B)\otimes e_1^R .
\]
The accepting projection is \(Q_{\gamma}=\gamma\gamma^*\in \B(H_{\rm out}\otimes H_R)\).
Hence, the coherent state exchange game of LTW is precisely the rank-one quantum game \(G_{\rm LTW}=(\psi_{\rm coh},Q_{\gamma})\).

Operationally, the referee gives Alice and Bob a single quantum input which
is a superposition of two components: one in which the players hold the
product state \(e_0^X\otimes e_0^Y\), and one in which they hold the
entangled state \(\phi\).  To win, their operation should send the first
component to \(e_0^A\otimes e_0^B\) and the second component to
\(e_1^A\otimes e_1^B\).  Their transformations must be done coherently, that is, the players cannot measure which component they received,
nor leave any private record of it.  The referee's reference register
\(H_R\), together with the final rank-one test against \(\gamma\), detects
precisely whether this coherence has been preserved. The winning transformation space is, therefore,
\[
\mathcal U_{\rm LTW}
=
\{T\in \B(H_{\rm in},H_{\rm out}):
Q_\gamma^\perp(T\otimes 1_{H_R})\psi_{\rm coh}=0\}.
\]

Since \(Q_\gamma\) is the projection onto \(\mathbb C\gamma\), the winning
condition is equivalent to
\((T\otimes 1_{H_R})\psi_{\rm coh}\in\mathbb C\gamma\).  Hence, there is a
scalar \(\alpha\in\mathbb C\) such that
\[
T: e_0^X\otimes e_0^Y \mapsto \alpha (e_0^A\otimes e_0^B),
\qquad
\phi \mapsto \alpha (e_1^A\otimes e_1^B) .
\]
Let
\( 
L=\operatorname{span}\{e_0^X\otimes e_0^Y,\phi\}\),
\(K=\operatorname{span}\{e_0^A\otimes e_0^B,e_1^A\otimes e_1^B\},
\)
and let \(C:L\to K\) be the linear isomorphism determined by
\(C:e_0^X\otimes e_0^Y \mapsto e_0^A\otimes e_0^B\) and \(C: \phi \mapsto e_1^A\otimes e_1^B\).  Then
\[
\mathcal U_{\rm LTW}
=
\{T\in \B(H_{\rm in},H_{\rm out}):T|_L\in\mathbb C C\}.
\]

 By Proposition \ref{prop:rank-ones-refl}, since the game is rank-one, its operator space is reflexive and it determines a projection-lattice rule; 
\[
\varphi_{\rm LTW}:\mathcal P(\B(H_{\rm in}))
\to
\mathcal P(\B(H_{\rm out}))
\]
given by
\[
\varphi_{\rm LTW}(P)
=
\begin{cases}
P_{C(PH_{\rm in})}, & PH_{\rm in}\subseteq L,\\
1_{H_{\rm out}}, & PH_{\rm in}\nsubseteq L,
\end{cases}
\]
where \(P_{C(PH_{\rm in})}\) denotes the projection onto the subspace
\(C(PH_{\rm in})\subseteq K\).  This rule says that, on input supports lying
inside the subspace \(L\), the output support is forced to be the
corresponding image under \(C\); once the input support has a component
outside \(L\), no restriction remains.

With this rule, one recovers the same space.  Indeed, if \(T|_L=\alpha C\),
then \(T(PH_{\rm in})\subseteq C(PH_{\rm in})\) whenever
\(PH_{\rm in}\subseteq L\), while the rule is automatic otherwise.  Conversely,
if \(T\) satisfies the lattice condition, then applying it to the rank-one
projection \(P_x\), for \(0\neq x\in L\), gives
\(Tx\in\mathbb C Cx\).  Hence \(C^{-1}T|_L\) leaves every line in \(L\)
invariant and is therefore a scalar multiple of the identity.  Thus
\(T|_L\in\mathbb C C\), and consequently
\( 
\mathcal U_{\varphi_{\rm LTW}}
=
\mathcal U_{\rm LTW}.
\)

\subsection{Quantum XOR games}
XOR games were introduced in quantum information theory by Cleve, Høyer,
Toner and Watrous \cite{CHTW04}, although they already appear implicitly in
the earlier works of Bell \cite{Bell64} and Clauser--Horne--Shimony--Holt
\cite{CHSH69}. A classical XOR game is a two-player one-round game in which
the referee chooses a question pair \((x,y)\in X\times Y\) according to a
distribution \(\pi\), sends \(x\) to Alice and \(y\) to Bob, and receives
bits \(a,b\in\{0,1\}\). The winning condition depends only on the parity
\(a\oplus b\): for each \((x,y)\) there is a prescribed value
\(c(x,y)\in\{0,1\}\), and the players win precisely when
\(a\oplus b=c(x,y)\).

Quantum XOR games, introduced by Regev and Vidick \cite{RV15}, are
two-player one-round games with quantum inputs and binary classical outputs.
The referee chooses a label \(i\in I\) with probability \(\pi(i)\), prepares
a bipartite quantum state on \(H_X\otimes H_Y\), sends the two registers to
Alice and Bob, and asks them to return bits \(a,b\in\{0,1\}\). The winning
condition depends only on the parity of the answers: each label \(i\) comes
with a prescribed value \(c(i)\in\{0,1\}\), and the players win precisely
when \(a\oplus b=c(i)\). 

They fit into the projection-test formalism in direct analogy with classical
games. Assume, for simplicity, that the quantum questions are pure states
\(\xi_i\in H_X\otimes H_Y\). Set
\(H_{\rm in}=H_X\otimes H_Y\), \(H_{\rm out}=\mathbb C^2\otimes\mathbb C^2\),
and \(H_R=\mathbb C^I\), where \(H_R\) stores a coherent copy of the
classical label. The input vector is
\[
\psi_\pi=\sum_{i\in I}\sqrt{\pi(i)}\,\xi_i\otimes e_i
\in H_{\rm in}\otimes H_R .
\]
Let \(P_0,P_1\in \B(H_{\rm out})\) be the parity projections
\[
P_0=\varepsilon_{00,00}+\varepsilon_{11,11},
\qquad
P_1=\varepsilon_{01,01}+\varepsilon_{10,10},
\]
where $ \varepsilon_{ab,a'b'}= \varepsilon_{a,a'} \otimes \varepsilon_{b,b'}$.
Thus \(P_0\) projects onto the even-parity output subspace and \(P_1\) onto
the odd-parity output subspace. The accepting projection is
\[
Q_{\rm XOR}
=
\sum_{i\in I} P_{c(i)}\otimes \varepsilon_{i,i}
\in (D_2 \otimes D_2)\otimes \B(H_R).
\]
Equivalently, in the \(i\)-th referee block, the accepted output subspace is
the even-parity subspace if \(c(i)=0\), and the odd-parity subspace if
\(c(i)=1\).
Hence, a quantum XOR game is the quantum game
\(G_{\rm qXOR}=(\psi_\pi,Q_{\rm XOR})\).

Since the outputs are classical bits, the accepting projection is diagonal
with respect to the computational basis of \(H_{\rm out}\). Hence
\(\mathcal U_{\rm qXOR}\) is a left module over the output diagonal masa, and
is therefore reflexive by Proposition~\ref{prop:one-sided-masa-modules-reflexive}. We record the corresponding projection-lattice rule.
Set
\(L_\varepsilon=\operatorname{span}\{\xi_i:c(i)=\varepsilon\}\), for
\(\varepsilon=0,1\).  Assuming that the chosen probability distribution has full support, the corresponding winning transformation space is
\[
\mathcal U_{\rm qXOR}
=
\{T\in \B(H_{\rm in},H_{\rm out}):
T(L_0)\subseteq P_0H_{\rm out},\;
T(L_1)\subseteq P_1H_{\rm out}\}.
\]
Equivalently, \(P_1TP_{L_0}=0\) and \(P_0TP_{L_1}=0\).  Thus, if
\(L_0\cap L_1\neq0\), every winning transformation vanishes on
\(L_0\cap L_1\).

The associated projection-lattice map
\(\varphi_{\rm qXOR}:\mathcal P(\B(H_{\rm in}))\to
\mathcal P(\B(H_{\rm out}))\) is given by
\[
\varphi_{\rm qXOR}(P)
=
\begin{cases}
0, & PH_{\rm in}\subseteq L_0\cap L_1,\\
P_0, & PH_{\rm in}\subseteq L_0 \text{ and } PH_{\rm in}\nsubseteq L_1,\\
P_1, & PH_{\rm in}\subseteq L_1 \text{ and } PH_{\rm in}\nsubseteq L_0,\\
1_{H_{\rm out}}, & PH_{\rm in}\nsubseteq L_0 \text{ and } PH_{\rm in}\nsubseteq L_1 .
\end{cases}
\]
This rule assigns to an input support the smallest parity output support
compatible with the constraints \(L_0\to P_0H_{\rm out}\) and
\(L_1\to P_1H_{\rm out}\).  Hence, if
\(T\in\mathcal U_{\rm qXOR}\), then
\(\varphi_{\rm qXOR}(P)^\perp TP=0\) for every projection \(P\).  Conversely,
applying the lattice condition to rank-one projections \(P_x\), with
\(x\in L_0\) or \(x\in L_1\), forces, respectively
\(Tx\in P_0H_{\rm out}\) and \(Tx\in P_1H_{\rm out}\).  Therefore,
\( 
\mathcal U_{\varphi_{\rm qXOR}}
=
\mathcal U_{\rm qXOR}.
\)
In particular, the quantum XOR projection-test determines a 
projection-lattice game.

\subsection{Classical-to-quantum graph colouring games}

Let \(G\) be a finite simple graph with vertex set \(X\), and write
\(x\sim y\) when \(\{x,y\}\in E(G)\).  The classical \(A\)-colouring game is
the synchronous non-local game in which the referee sends vertices
\(x,y\in X\), the players return colours \(a,b\in A\), and the winning
conditions are \(x=y\Rightarrow a=b\) and \(x\sim y\Rightarrow a\neq b\).

Following Todorov--Turowska \cite{TT24} and
Brannan--Harris--Todorov--Turowska \cite{BHTT23}, the
classical-to-quantum version keeps the input classical but replaces the
output diagonal algebra by the full matrix algebra.  Let
\(\Omega_A=|A|^{-1/2}\sum_{a\in A}e_a\otimes \overline e_a\) and
\(J_A=\Omega_A\Omega_A^*\in M_A\otimes M_A^{\op}\).  The \emph{classical-to-quantum
\(A\)-colouring game} of \(G\) is the zero-preserving join-continuous map
\[
\varphi_G^A:\mathcal P(D_X\otimes D_X^{\op})\to\mathcal P(M_A\otimes M_A^{\op})
\]
determined on atoms by
\[
\varphi_G^A(\varepsilon_{x,x}\otimes (\varepsilon_{y,y})^{\op})
=
\begin{cases}
J_A, & x=y,\\
J_A^\perp, & x\sim y,\\
1_{A}\otimes 1_A^{\op}, & \text{otherwise}.
\end{cases}
\]
Thus, the quantisation consists in replacing the classical equality
projection \(J_A^{\rm cl}=\sum_{a\in A}\varepsilon_{a,a}\otimes (\varepsilon_{a,a})^{\op}\in
D_A\otimes D_A^{\op}\) by the  equality projection
\(J_A=\Omega_A\Omega_A^*\in M_A\otimes M_A^{\op}\).  The input rule remains
classical, while the output predicate is promoted from a diagonal relation to
a genuine quantum projection. This game is concurrent/synchronous. Indeed,
\(\Pi_{D_X}=J_X^{\rm cl}
=\bigvee_{x\in X}\varepsilon_{x,x}\otimes
(\varepsilon_{x,x})^{\op}\), whereas
\(\Pi_{M_A}=J_A\). Hence join-continuity gives
\[
\varphi_G^A(\Pi_{D_X})
=
\bigvee_{x\in X}J_A
=
J_A
=
\Pi_{M_A}.
\]

This game also has a projection-test realisation.  Put
\(H_{\rm in}=\mathbb C^X\otimes\overline{\mathbb C^X}\),
\(H_{\rm out}=\mathbb C^A\otimes\overline{\mathbb C^A}\), and
\(H_R=\mathbb C^X\otimes\overline{\mathbb C^X}\).  Let \(\pi\) be a full-support
probability distribution on \(X\times X\), and set
\[
\psi_\pi
=
\sum_{x,y\in X}\sqrt{\pi(x,y)}
(e_x\otimes \overline e_y)_{\rm in}\otimes(e_x\otimes \overline e_y)_R .
\]
For \(x,y\in X\), let \(Q_{x,y}=J_A\) if \(x=y\), \(Q_{x,y}=J_A^\perp\) if
\(x\sim y\), and \(Q_{x,y}=1_{A}\otimes 1_A^{\op}\) otherwise, and define
\[
Q_G^A
=
\sum_{x,y\in X}Q_{x,y}\otimes \varepsilon_{(x,y),(x,y)}
\in \B(H_{\rm out}\otimes H_R).
\]
Then \(G_A^{\rm cq}=(\psi_\pi,Q_G^A)\) is a projection-test realisation of
the lattice-map game \(\varphi_G^A\).  Indeed, for
\(T\in \B(H_{\rm in},H_{\rm out})\), the condition
\((Q_G^A)^\perp(T\otimes 1_{H_R})\psi_\pi=0\) is equivalent, since \(\pi\)
has full support and the referee basis vectors are mutually orthogonal, to
\(Q_{x,y}^\perp T(e_x\otimes \overline e_y)=0\) for every \(x,y\in X\).  Equivalently,
\(T(e_x\otimes \overline e_y)\in
\varphi_G^A(\varepsilon_{x,x}\otimes (\varepsilon_{y,y})^{\op})H_{\rm out}\) for every \(x,y\).
Thus the projection-test game tests all classical input atoms at once, and
its winning transformation space is precisely the operator space determined
by the classical-to-quantum lattice rule.

\subsection{Quantum graph homomorphism games}

The ``classical" graph homomorphism game \cite{MR16} associated with finite graphs
\(G\) and \(H\) is the synchronous non-local game whose perfect deterministic
strategies are exactly graph homomorphisms \(G\to H\).  The referee sends
vertices \(x,y\in V(G)\) to Alice and Bob, respectively, and the players
respond with vertices \(a,b\in V(H)\).  They win if  they satisfy
\( 
x=y\Rightarrow a=b\) and
\( 
x\sim_G y\Rightarrow a\sim_H b .
\)

 Non-commutative graphs were introduced by
Duan--Severini--Winter \cite{DSW13} as operator systems
\(\mathcal S\subseteq M_n\), $n\in \bb N$.  Stahlke \cite{Stahlke16} equivalently used operator anti-systems, namely selfadjoint trace-zero subspaces of \(M_n\) as it is more natural when studying homomorphisms in the non-commutative setting. 

\subsubsection{The Todorov--Turowska  version}
We  recall the quantum graph homomorphism game of
Todorov--Turowska \cite{TT24},  
(see also \cite{BHTT23}).  We use a slightly modified version of the
Todorov--Turowska formulation fitting our vectorisation convention. Let \(H_X=\mathbb C^X\).  For an operator anti-system \(\mathcal S^0\subseteq M_X\), put
\(U_{\mathcal S^0}=\operatorname{vec}(\mathcal S^0)\subseteq H_X\otimes\overline{H_X}\), where
\(\operatorname{vec}:M_X\to H_X\otimes\overline{H_X}\) is the
Hilbert--Schmidt vectorisation.  Selfadjointness of \(\mathcal S^0\) becomes invariance
of \(U_{\mathcal S^0}\) under the antiunitary flip
\(\xi\otimes\overline\eta\mapsto\eta\otimes\overline\xi\), while
\(\operatorname{Tr}(\mathcal S^0)=0\) becomes \(U_{\mathcal S^0}\perp\Omega_X\), where
\(\Omega_X=|X|^{-1/2}\sum_{x\in X} e_x\otimes\overline e_x
       \in H_X\otimes\overline{H_X}\).  If \(G\) is a classical graph,
then the off-diagonal edge space
\(\mathcal S_G^0=\operatorname{span}\{\varepsilon_{x,y}:x\sim_G y\}\) gives
\(U_{\mathcal S_G^0}=\operatorname{span}\{e_x\otimes\overline e_y:x\sim_G y\}\).

Let \(\mathcal S^0_G\subseteq M_X\) and \(\mathcal S^0_H\subseteq M_A\) be operator anti-systems, and
let
\(e_G\in M_X\otimes M_X^{\op}\) and \(e_H\in M_A\otimes M_A^{\op}\) be the
orthogonal projections onto \(\rm vec(\mathcal S_{G}^0)\) and \(\rm vec(\mathcal S_{H}^0)\).  The quantum graph homomorphism
game \(G \to H\) is the zero-preserving join-continuous map
\(\varphi_{G \to H}:\mathcal P(M_X\otimes M_X^{\op})\to
\mathcal P(M_A\otimes M_A^{\op})\) defined by
\[
\varphi_{G \to H}(P)
=
\begin{cases}
0, & P=0,\\
e_H, & 0\neq P\leq e_G,\\
I_A\otimes I_A^{\op}, & \text{otherwise}.
\end{cases}
\]
For the present rule, the only non-trivial projections are the non-zero
\(P\leq e_G\), and on all of them \(\varphi_{G \to H}(P)=e_H\).  Hence the
condition becomes \(e_H^\perp TP=0\) for every non-zero \(P\leq e_G\), which
is equivalent to \(e_H^\perp Te_G=0\).  Therefore
\[
\mathcal U_{\varphi_{G \to H}}
=
\{T\in \B(H_X\otimes\overline{H_X},H_A\otimes\overline{H_A})
: e_H^\perp Te_G=0\}.
\]

\subsubsection{Goldberg's general quantum graph formulation}

Let \(\mathcal X \subseteq \B(H_{X})\) be a finite-dimensional tracial von Neumann algebra, with \(L^2\)-space
\(L^2(\mathcal X)\). A \emph{quantum graph} $G$ \cite{MRV18} on
\(\mathcal X\) consists of a quantum adjacency operator, that is,  a self-adjoint operator \(A_G\in B(L^2(\mathcal X))\) such that
\[
m_\mathcal X(A_G\otimes A_G)m_\mathcal X^*=A_G,
\]
\[
(1\otimes \eta_\mathcal X^*m_\mathcal X)(1\otimes A_G\otimes 1)
(m_\mathcal X^*\eta_\mathcal X\otimes 1)=A_G.
\]
It is called \emph{irreflexive} if moreover
\[
m_\mathcal X(A_G\otimes 1)m_\mathcal X^*=0 .
\]

We now translate quantum graphs from operators to projections and bimodules \cite[Theorem 7.7]{MRV18}; see also \cite{Daws24}. Let \(A_G\in \B(L^2(\mathcal X))\) be a quantum graph and set
\(e_G=\Psi_{\cl X}(A_G)\). By Theorem~\ref{thm:goldberg-daws-transform}, the
quantum-graph relations, together with the self-adjointness of \(A_G\),
imply that \(e_G\) is a projection satisfying
\(\sigma_{\mathcal X}(e_G)=e_G\). Hence, by
Proposition~\ref{prop:rectangular-daws}, \(e_G\) corresponds to the
\(\mathcal X'\)-\(\mathcal X'\) operator bimodule
\( 
\mathcal S_G^0
=
\operatorname{vec}^{-1}
\big(e_G(H_{X}\otimes \overline{H_{X}})\big).
\)
The symmetry condition \(\sigma_{\mathcal X}(e_G)=e_G\)
translates to \((\mathcal S_G^0)^*=\mathcal S_G^0\). Moreover, the irreflexivity condition
\(m_{\mathcal X}(A_G\otimes 1)m_{\mathcal X}^*=0\) translates to
\(e_G\Psi_{\cl X}(1_{L^2(\mathcal X)})=0\). Note that
\( 
\Pi_{\mathcal X}=s(\Psi_{\cl X}(1_{L^2(\mathcal X)}))
\)
is the projection corresponding to the vectorised diagonal bimodule \(\mathcal X'\), and
the previous relation is equivalent to
\( 
e_G\Pi_{\mathcal X}=0.
\) Thus, under this
correspondence, an irreflexive quantum graph is represented by a
self-adjoint \(\mathcal X'\)-bimodule
\(\mathcal S_G^0\subseteq \B(H_{X})\) satisfying
\(\mathcal S_G^0\perp \mathcal X'\) with respect to the Hilbert--Schmidt
inner product. In the $\mathcal X= M_X$ case, we retrieve the operator anti-system formulation as earlier.

Let \(G\) and \(H\) be irreflexive quantum graphs on finite-dimensional
quantum sets \(\mathcal X\) and \(\mathcal A\), with adjacency operators
\(A_G\in \B(L^2(\mathcal X))\) and \(A_H\in \B(L^2(\mathcal A))\). The \emph{quantum graph
homomorphism game}~\cite{Goldberg26}  from \(G\) to \(H\) is the synchronous rule operator
\[
\lambda_{G\to H}:L^2(\mathcal X)\otimes L^2(\mathcal X^{\op})\to L^2(\mathcal A)\otimes L^2(\mathcal A^{\op})
\]
defined by
\[
\begin{aligned}
\lambda_{G\to H}
&:=
\bigl((A_H\otimes 1)m_\mathcal A^*\eta_\mathcal A\bigr)
\bigl(\eta_\mathcal X^*m_\mathcal X(A_G\otimes 1)\bigr)
+
(m_\mathcal A^*\eta_\mathcal A)(\eta_\mathcal X^*m_\mathcal X)  \\
&\quad
+
(\eta_\mathcal A\otimes\eta_\mathcal A)(\eta_\mathcal X^*\otimes\eta_\mathcal X^*)
-
(\eta_\mathcal A\otimes\eta_\mathcal A)\bigl(\eta_\mathcal X^*m_\mathcal X(A_G\otimes 1)\bigr)
-
(\eta_\mathcal A\otimes\eta_\mathcal A)(\eta_\mathcal X^*m_\mathcal X).
\end{aligned}
\]

We now describe the corresponding projection-test game.  Let
\(\mathcal X\subseteq \B(H_X)\) and \(\mathcal A\subseteq \B(H_A)\) be faithful
finite-dimensional representations, and put
\( 
\mathcal M_G=\mathcal X\otimes\mathcal X^{\op}\),
\( 
\mathcal N_H=\mathcal A\otimes\mathcal A^{\op}.
\)
We represent \(\mathcal M_G\) on
\(H_{\rm in}:=H_X\otimes\overline{H_X}\) and \(\mathcal N_H\) on
\(H_{\rm out}:=H_A\otimes\overline{H_A}\).  Set
\(e_G:=\Psi_{\mathcal X}(A_G)\) and
\(e_H:=\Psi_{\mathcal A}(A_H)\), represented on
\(H_{\rm in}\) and \(H_{\rm out}\), respectively.  Also write
\(\Pi_{\mathcal X}:=P_{\operatorname{vec}(\mathcal X')}\) and
\(\Pi_{\mathcal A}:=P_{\operatorname{vec}(\mathcal A')}\).

Applying \(\Psi_{\mathcal M_G,\mathcal N_H}\) to the five summands in
\(\lambda_{G\to H}\) gives
\[
\begin{aligned}
\Psi_{\mathcal M_G,\mathcal N_H}(\lambda_{G\to H})
&=
e_H\otimes e_G^{\op}
+
\Psi_{\mathcal A}(1_{L^2(\mathcal A)})
\otimes
\Psi_{\mathcal X}(1_{L^2(\mathcal X)})^{\op}
+
1_{\mathcal N_H}\otimes1_{\mathcal M_G}^{\op} \\
&\qquad \qquad \qquad \qquad \qquad \qquad \qquad 
-
1_{\mathcal N_H}\otimes e_G^{\op}
-
1_{\mathcal N_H}\otimes
\Psi_{\mathcal X}(1_{L^2(\mathcal X)})^{\op}.
\end{aligned}
\]
Under the special Frobenius normalisation \(mm^*=1\),
Lemma~\ref{lem:support-Psi-id} gives
\( 
\Psi_{\mathcal X}(1_{L^2(\mathcal X)})=\Pi_{\mathcal X}\),
\( 
\Psi_{\mathcal A}(1_{L^2(\mathcal A)})=\Pi_{\mathcal A}.
\)
Thus, if \(Q_{G\to H}:=\Psi_{\mathcal M_G,\mathcal N_H}(\lambda_{G\to H})\), then
\[
Q_{G\to H}
=
e_H\otimes e_G^{\op}
+
\Pi_{\mathcal A}\otimes\Pi_{\mathcal X}^{\op}
+
1_{\mathcal N_H}\otimes
(1_{\mathcal M_G}-e_G-\Pi_{\mathcal X})^{\op}.
\]
Since \(G\) is irreflexive, \(e_G\Pi_{\mathcal X}=0\).  Hence
\(e_G\), \(\Pi_{\mathcal X}\), and
\(1_{\mathcal M_G}-e_G-\Pi_{\mathcal X}\) are pairwise orthogonal
projections, and therefore \(Q_{G\to H}\) is a projection.

The associated projection-test game is \( 
G_{G\to H}=
\bigl(\Pi_{\mathcal M_{G}},Q_{G\to H}\bigr)
\) with referee algebra  
\(\cl R= \cl M_{G}^{\rm op}\). 
Let \(\mathcal U_{G\to H}\subseteq \B(H_{\rm in},H_{\rm out})\) be the
corresponding winning operator space:
\[
\mathcal U_{G\to H}
=
\{T:Q_{G\to H}^{\perp}(T\otimes 1_{\cl M_{G}^{\rm op}})
\Pi_{\mathcal M_{G}} =0\}.
\]
  Using
\((F\otimes E^{\op})\operatorname{vec}(S)=\operatorname{vec}(FSE)\), the block
formula for \(Q_{G\to H}\) gives
\[
\mathcal U_{G\to H}
=
\{T\in \B(H_{\rm in},H_{\rm out}):
e_H^\perp T e_G=0,\ 
\Pi_{\mathcal A}^{\perp}T\Pi_{\mathcal X}=0\}.
\]
In particular, \(\mathcal U_{G\to H}\) is an
\(\mathcal N_H'\)-\(\mathcal M_G'\) operator bimodule and by Proposition~\ref{prop:Goldberg-concurrency}, equivalently by Proposition~\ref{prop:concurrency-equivalences}, the game is concurrent/synchronous. Finally, \(\mathcal U_{G\to H}\) is reflexive as an operator space.  Indeed, if
\(S\in\operatorname{Ref}(\mathcal U_{G\to H})\) and
\(\xi\in\operatorname{Ran}e_G\), then
\(\mathcal U_{G\to H}\xi\subseteq\operatorname{Ran}e_H\), hence
\(S\xi\in\operatorname{Ran}e_H\).  Thus \(e_H^\perp S e_G=0\).  Similarly,
\(\Pi_{\mathcal A}^{\perp}S\Pi_{\mathcal X}=0\).  Hence
\(S\in\mathcal U_{G\to H}\), and so
\(\operatorname{Ref}(\mathcal U_{G\to H})=\mathcal U_{G\to H}\).
Therefore the Goldberg graph homomorphism rule determines a reflexive
operator bimodule, and hence a projection-lattice quantum game.

\subsubsection{Comparison in the full matrix algebra case}

Assume now that \(\mathcal X=M_X= \B(H_X)\) and
\(\mathcal A=M_A= \B(H_A)\), where \(H_X=\mathbb C^X\) and
\(H_A=\mathbb C^A\). Recall that \(\Tr_X\) denotes the unnormalised trace. Hence, for
\(\mathcal X=M_X\), the special Frobenius normalisation is obtained from the
trace \(\tau_X=|X|\Tr_X\), and similarly for \(M_A\).  Put
\(H_{\rm in}=H_X\otimes\overline{H_X}\),
\(H_{\rm out}=H_A\otimes\overline{H_A}\), and
\(H_R=\overline{H_{\rm in}}\).  Then
\(\mathcal M_G=M_X\otimes M_X^{\op}=\B(H_{\rm in})\),
\(\mathcal N_H=M_A\otimes M_A^{\op}=\B(H_{\rm out})\) and $ \cl R = \cl M_{G}^{\rm op}= \cl B(\overline H_{\rm in})$.

Let \(e_G=P_{\operatorname{vec}(\mathcal S_G^0)}\) and
\(e_H=P_{\operatorname{vec}(\mathcal S_H^0)}\).  Since
\(\mathcal X'=\mathbb C1\) and \(\mathcal A'=\mathbb C1\), we have
\(P_{\operatorname{vec}(\mathcal X')}=J_{X}\) and
\(P_{\operatorname{vec}(\mathcal A')}=J_{A}\) while the special normalisation gives $ \Psi_{\cl X}(1_{L^2(\cl X)})= J_X$ and $ \Psi_{\cl A}(1_{L^2(\cl A)})= J_A$.
Thus the Goldberg accepting projection is
\[
Q_{\rm Gold}
=
e_H\otimes e_G^{\op}
+
J_{A}\otimes J_{X}^{\op}
+
1_{H_{\rm out}}\otimes
(1_{H_{\rm in}}-e_G-J_{X})^{\op}.
\]
Since \(\mathcal M_G'= \mathbb C1_{H_{\rm in}}\), the input projection is
\(\Omega_{\rm in} \Omega_{\rm in}^*\), where
\[
\Omega_{\rm in}
=
|X|^{-1}
\sum_{x,y\in X}
(e_x\otimes\overline e_y)
\otimes
\overline{(e_x\otimes\overline e_y)} .
\]
Hence the Goldberg projection-test game is
\(G^{\rm Gold}_{G\to H}=(\Omega_{\rm in},Q_{\rm Gold})\), and its winning
space is
\[
\mathcal U_{\rm Gold}
=
\{T\in \B(H_{\rm in},H_{\rm out}):
e_H^\perp T e_G=0,\ 
J_{A}^{\perp}TJ_{X}=0\}.
\]

On the other hand, the Todorov--Turowska quantum graph homomorphism rule, as defined
in \cite{TT24,BHTT23},  imposes the support condition
\[
\mathcal U_{\rm TT}
=
\{T\in \B(H_{\rm in},H_{\rm out}): e_H^\perp T e_G=0\}.
\]
In the same full-vectorisation projection-test model it is realised by
\(P=\Omega_{\rm in} \Omega_{\rm in}^*\) and
\[
Q_{\rm TT}
=
e_H\otimes e_G^{\op}
+
1_{H_{\rm out}}\otimes(1_{H_{\rm in}}-e_G)^{\op}.
\]
Thus
\[
\mathcal U_{\rm Gold}
=
\mathcal U_{\rm TT}
\cap
\{T:T\Omega_X\in\mathbb C\Omega_A\},
\qquad
Q_{\rm Gold}
=
Q_{\rm TT}
-
J_{A}^{\perp}\otimes J_{X}^{\op}.
\]
Therefore the edge part of Goldberg's rule agrees with the
Todorov--Turowska quantum graph homomorphism rule, while the extra
\(J_{A}\otimes J_{X}^{\op}\) term enforces the 
synchronicity condition.

\subsection{Quantum graph isomorphism games}

The classical graph isomorphism game of Atserias et al.~\cite{AMRSSV19}
is a synchronous non-local game played with two finite graphs \(G\) and \(H\).  The question set for
each player is \(V(G)\sqcup V(H)\); questions from \(G\) must be answered in
\(H\), and questions from \(H\) must be answered in \(G\).  The winning rule
requires equality, adjacency and non-adjacency to be preserved, together with
the usual consistency conditions across the two graphs.  Thus a perfect
deterministic strategy is exactly a pair of mutually inverse graph
homomorphisms, equivalently a graph isomorphism \(G\cong H\).

\subsubsection{The Brannan et al. version}

We recall the isomorphism version of the quantum graph homomorphism game \cite{BHTT24}.  Let
\(H_X=\mathbb C^X\) and \(H_A=\mathbb C^A\).  For operator anti-systems
\(\mathcal S_G^0\subseteq M_X\) and \(\mathcal S_H^0\subseteq M_A\), let
\(e_G\in M_X\otimes M_X^{\op}\) and
\(e_H\in M_A\otimes M_A^{\op}\) be the orthogonal projections onto
\(\operatorname{vec}(\mathcal S_G^0)\subseteq H_X\otimes\overline{H_X}\) and
\(\operatorname{vec}(\mathcal S_H^0)\subseteq H_A\otimes\overline{H_A}\),
respectively.

At the level of winning transformations, the isomorphism game imposes the
homomorphism condition in both directions:
\[
e_H^\perp T e_G=0,
\qquad
e_H T e_G^\perp=0.
\]
Equivalently, \(T\operatorname{Ran}(e_G)\subseteq\operatorname{Ran}(e_H)\)
and \(T\operatorname{Ran}(e_G)^\perp\subseteq\operatorname{Ran}(e_H)^\perp\).
Thus the winning transformation space is
\[
\mathcal W_{\rm TT}
=
\{T\in \B(H_X\otimes\overline{H_X},H_A\otimes\overline{H_A}):
e_H^\perp T e_G=0,\ e_HTe_G^\perp=0\}.
\]

The corresponding projection-lattice rule is the zero-preserving
join-continuous map
\(\varphi_{G\cong H}:\mathcal P(M_X\otimes M_X^{\op})\to
\mathcal P(M_A\otimes M_A^{\op})\) given by
\[
\varphi_{G\cong H}(P)
=
\begin{cases}
0, & P=0,\\
e_H, & 0\neq P\leq e_G,\\
e_H^\perp, & 0\neq P\leq e_G^\perp,\\
1_{A}\otimes 1_A^{\rm op}, & \text{otherwise}.
\end{cases}
\]
Indeed, testing projections below \(e_G\) gives \(e_H^\perp T e_G=0\), while
testing projections below \(e_G^\perp\) gives \(e_HTe_G^\perp=0\).  Hence the
associated operator space is precisely \(\mathcal W_{\rm TT}\).

\subsubsection{Goldberg's quantum graph isomorphism game}

Let \(G\) and \(H\) be irreflexive quantum graphs on finite-dimensional
tracial von Neumann algebras \(\mathcal X\) and \(\mathcal A\), with adjacency operators
\(A_G\in \B(L^2(\mathcal X))\) and \(A_H\in \B(L^2(\mathcal A))\).  Put
\(\mathcal M_G=\mathcal X\otimes\mathcal X^{\op}\) and
\(\mathcal N_H=\mathcal A\otimes\mathcal A^{\op}\).  The \emph{quantum graph isomorphism game} rule~\cite{Goldberg26} is
\[
\lambda_{G\cong H}
=
m_{\mathcal N_H}
(\lambda_{G\to H}\otimes\lambda_{H\to G}^*)
m_{\mathcal M_G}^*,
\]
where \(\lambda_{H\to G}^*:L^2(\mathcal M_G)\to L^2(\mathcal N_H)\) is the
adjoint of the reverse homomorphism rule. We note that $\lambda_{G\cong H}$ is bisynchronous, that is, both $\lambda_{G\cong H}$ and $ \lambda_{G\cong H}^*$ are synchronous \cite[Proposition 5.14]{Goldberg26}. 

Choose faithful finite-dimensional representations
\(\mathcal X\subseteq \B(H_X)\) and \(\mathcal A\subseteq \B(H_A)\), and set
\(H_{\rm in}=H_X\otimes\overline{H_X}\),
\(H_{\rm out}=H_A\otimes\overline{H_A}\), and
\(H_R=\overline{H_{\rm in}}\).  Let
\(e_G=\Psi_{\mathcal X}(A_G)\) and
\(e_H=\Psi_{\mathcal A}(A_H)\), represented on \(H_{\rm in}\) and
\(H_{\rm out}\).  Put
\[
\Pi_{\mathcal X}=P_{\operatorname{vec}(\mathcal X')},
\qquad
\Pi_{\mathcal A}=P_{\operatorname{vec}(\mathcal A')},
\qquad
\Theta_G=1_{\mathcal M_G}-e_G-\Pi_{\mathcal X},
\qquad
\Theta_H=1_{\mathcal N_H}-e_H-\Pi_{\mathcal A}.
\]
Assuming the special Frobenius normalisation for \(\mathcal X\) and \(\mathcal A\),
Theorem~\ref{thm:goldberg-daws-transform} gives
\[
Q_{G\cong H}:=\Psi_{\mathcal M_G,\mathcal N_H}(\lambda_{G\cong H})
=
e_H\otimes e_G^{\op}
+
\Pi_{\mathcal A}\otimes\Pi_{\mathcal X}^{\op}
+
\Theta_H\otimes\Theta_G^{\op}.
\]
Thus the projection-test game is
\( 
\bigl(\Pi_{\mathcal M_{G}},Q_{G\cong H}\bigr),
\)
with referee algebra \(\cl R=\cl M_{G}^{\rm op}\).  Its winning operator space is
\[
\mathcal W_{\rm Gold}
=
\{T\in \B(H_{\rm in},H_{\rm out}):
e_H^\perp T e_G=0,\ 
\Pi_{\mathcal A}^\perp T\Pi_{\mathcal X}=0,\ 
\Theta_H^\perp T\Theta_G=0\}.
\]
 In particular,
\(\mathcal W_{\rm Gold}\) is an
\(\mathcal N_H'\)-\(\mathcal M_G'\) operator bimodule and by Proposition~\ref{prop:Goldberg-concurrency} (equiv. Proposition~\ref{prop:concurrency-equivalences}), the game is concurrent/synchronous.

A similar proof  as in the homomorphism case shows that
\(\mathcal W_{\rm Gold}\) is reflexive.  

\subsubsection{Comparison in the full matrix algebra case}

Assume now that \(\mathcal X=M_X\subseteq \B(H_X)\) and
\(\mathcal A=M_A\subseteq \B(H_A)\), where \(H_X=\mathbb C^X\) and
\(H_A=\mathbb C^A\), and use the special traces
\(\tau_X=|X|\Tr_X\) and \(\tau_A=|A|\Tr_A\).  Then
\(\Pi_{\mathcal X}=J_{X}\) and
\(\Pi_{\mathcal A}=J_{A}\), where
\(\Omega_X=|X|^{-1/2}\sum_x e_x\otimes\overline e_x\) and
\(\Omega_A=|A|^{-1/2}\sum_a e_a\otimes\overline e_a\).  Moreover
\(\mathcal M_G=\B(H_{\rm in})\), so the input projection is
\(\Omega_{\rm in} \Omega_{\rm in}^*\), the projection onto the maximally entangled vector in
\(H_{\rm in}\otimes\overline{H_{\rm in}}\).

In this case the Todorov--Turowska isomorphism rule gives
\[
\mathcal W_{\rm TT}
=
\{T:e_H^\perp T e_G=0,\ e_HTe_G^\perp=0\},
\]
or, in the full-vectorisation projection-test model,
\[
Q_{\rm TT}
=
e_H\otimes e_G^{\op}
+
e_H^\perp\otimes(e_G^\perp)^{\op},
\qquad
P=\Omega_{\rm in} \Omega_{\rm in}^*.
\]
Goldberg's rule instead splits the complements of \(e_G\) and \(e_H\) into
the diagonal and remaining parts.  With
\(\Theta_G=1-e_G-J_{X}\) and
\(\Theta_H=1-e_H-J_{A}\), it is
\[
Q_{\rm Gold}
=
e_H\otimes e_G^{\op}
+
J_{A}\otimes J_{X}^{\op}
+
\Theta_H\otimes\Theta_G^{\op}.
\]
Consequently,
\[
\mathcal W_{\rm Gold}
=
\mathcal W_{\rm TT}
\cap
\{T:T\Omega_X\in\mathbb C\Omega_A,\ 
T\operatorname{Ran}\Theta_G\subseteq\operatorname{Ran}\Theta_H\}.
\]
Equivalently,
\[
Q_{\rm Gold}
=
Q_{\rm TT}
-
J_{A}\otimes\Theta_G^{\op}
-
\Theta_H\otimes J_{X}^{\op}.
\]
Thus, in the full matrix case, Goldberg's isomorphism rule refines the
Todorov--Turowska isomorphism rule by separating the edge-complement sector
into the diagonal/synchronicity block and the remaining block.

\subsection{Quantum-to-classical graph homomorphism games}

We recall the quantum-to-classical graph homomorphism game of
Brannan--Ganesan--Harris \cite{BGH22}. In this example the authors used ``reflexive" (all loops) quantum graphs, that is 
\((\mathcal S,\mathcal X,M_n)\)  such that  \(\mathcal X\subseteq M_n\) is a
non-degenerate finite-dimensional von Neumann algebra and
\(\mathcal S\subseteq M_n\) is an operator system which is an \(\mathcal X'\)-bimodule.  Let \(G\) be a finite
classical graph with vertex set \(A:=V(G)\).

We first recall the auxiliary basis obtained in \cite[Proposition 4.1]{BGH22}.  Choose a
decomposition
\( 
\mathbb C^n=K_1\oplus\cdots\oplus K_r
\)
such that \(\mathcal X\) acts irreducibly on each \(K_i\), and let \(P_{K_i}\)
be the projection onto \(K_i\).  A \emph{quantum edge basis} for
\(\mathcal S\) is an orthonormal basis \(F=\{Y_\alpha\}_\alpha\) of
\(\mathcal S\), with respect to the unnormalised Hilbert--Schmidt inner
product, which contains an orthonormal basis of \(\mathcal X'\) chosen to
include the normalised projections \((\dim K_i)^{-1/2}P_{K_i}\), and has each
\(Y_\alpha\) supported in a single block \(P_{K_i}M_nP_{K_j}\).

Fix an orthonormal basis \(v_1,\ldots,v_n\) of \(\mathbb C^n\) adapted to this
decomposition.  If
\(Y_\alpha=\sum_{p,q}y_{\alpha,pq}v_pv_q^*\), set
\[
\xi_\alpha
=
\operatorname{vec}(Y_\alpha)
=
\sum_{p,q}y_{\alpha,pq}v_p\otimes  \overline v_q
\in \mathbb C^n\otimes \overline{\mathbb C^n} .
\]
The referee samples one of the vectors \(\xi_\alpha\), sends the two tensor
legs to Alice and Bob, and receives classical outputs
\(a,b\in A\).  The winning rule is
\[
Y_\alpha\in\mathcal X' \Rightarrow a=b,
\qquad
Y_\alpha\perp\mathcal X' \Rightarrow a\sim_G b.
\]
Thus the input system is quantum, while the output system is classical.

We now express this game as a projection test.  Put
\( 
H_{\rm in}=\mathbb C^n\otimes \overline{\mathbb C^n}\) and
\( 
H_{\rm out}=\mathbb C^A\otimes\overline{\mathbb C^A}.
\)
For each \(\alpha\), define the diagonal accepting projection
\(Q_\alpha\in D_A\otimes D_A^{\op}\) by
\[
Q_\alpha=
\begin{cases}
\displaystyle\sum_{a\in A}
\varepsilon_{a,a}\otimes(\varepsilon_{a,a})^{\op},
& Y_\alpha\in\mathcal X',\\[1.2em]
\displaystyle\sum_{a\sim_G b}
\varepsilon_{a,a}\otimes(\varepsilon_{b,b})^{\op},
& Y_\alpha\perp\mathcal X'.
\end{cases}
\]
Let \(H_R=\bb C^F\), with orthonormal basis \((f_\alpha)_\alpha\).  If
\(\pi\) is a full-support probability distribution on \(F\), set
\[
\psi_\pi
=
\sum_\alpha\sqrt{\pi_\alpha}\,\xi_\alpha\otimes f_\alpha,
\qquad
Q_{\rm BGH}
=
\sum_\alpha Q_\alpha\otimes f_\alpha f_\alpha^* .
\]
Then \(G_{\rm BGH}=(\psi_\pi,Q_{\rm BGH})\) is a projection-test
realisation of the game.  Indeed, for
\(T\in \B(H_{\rm in},H_{\rm out})\), the condition
\[
Q_{\rm BGH}^\perp(T\otimes 1_{H_R})\psi_\pi=0
\]
is equivalent, since \(\pi\) has full support and the vectors
\((f_\alpha)_\alpha\) are orthonormal, to
\( 
Q_\alpha^\perp T\xi_\alpha=0\) 
for every \(\alpha .
\)
Hence the winning transformation space is
\[
\mathcal U_{\rm BGH}
=
\{T\in \B(H_{\rm in},H_{\rm out}):
Q_\alpha^\perp T\xi_\alpha=0\ \text{for all }\alpha\}.
\]
Moreover, \(\mathcal U_{\rm BGH}\) is an
\((D_A\otimes D_A^{\op})\)-\((\mathcal X'\otimes(\mathcal X')^{\op})\)
bimodule. 

Let $\Gamma$ be a perfect strategy for $ G_{\rm BGH}$ with Kraus operators $(T_i)$. Since the quantum edge basis contains an orthonormal basis of \(\mathcal X'\), the conditions
\(Q_\alpha^\perp T_i\xi_\alpha=0\) for the basis elements \(Y_\alpha\in\mathcal X'\) give
\[
\Pi_{D_A}^\perp T_i\Pi_{\mathcal X}=0,
\]
for every Kraus operator \(T_i\), where \(
\Pi_{\mathcal X}:=P_{\operatorname{vec}(\mathcal X')}\) and
\(\Pi_{D_A}:=P_{\operatorname{vec}(D_A)}
=
\sum_{a\in V(G)}\varepsilon_{a,a}\otimes(\varepsilon_{a,a})^{\op}
\). Hence, by Proposition~\ref{prop:concurrency-equivalences} the BGH game is concurrent/synchronous.

Finally, since the game is quantum-to-classical, Corollary \ref{cor:q2c_c2q} implies the game determines a projection-lattice game.  We omit the explicit projection-lattice rule, since its case-by-case formula
over the full projection lattice is technical and adds little to the comparison.

\subsection*{Acknowledgments}
This work was motivated by a discussion on the definitions of quantum non-local games initiated by Adina Goldberg during the Operator Algebras and Quantum Information program at Institut Mittag-Leffler. The author thanks Adina Goldberg for discussions and useful comments on this manuscript, and is grateful to Ivan G. Todorov and Lyudmila Turowska for their helpful feedback. The author also thanks Aristides Katavolos for bringing \cite{Lar82} to their attention. This material is based upon work supported by the Swedish Research Council under grant no. 2021-06594, while the author was in residence at Institut Mittag-Leffler in Djursholm, Sweden, during the Spring semester of 2026. The author warmly thanks the Institute for its hospitality and excellent working conditions. The author acknowledges the use of ChatGPT/Codex (GPT-5.6 Sol, OpenAI) during manuscript preparation to assist with consistency checks and literature searches. All mathematical statements, proofs and references were reviewed and written by the author, who takes full responsibility for the content.

\subsection*{Conflict of Interest}
The author has no conflicts to disclose.

\subsection*{Data Availability}
Data sharing is not applicable to this article as no new data were created or analysed in this study.



\end{document}